\newtheorem{theorem}{Theorem}[section]
\newtheorem{corollary}{Corollary}[section]
\newtheorem{definition}{Definition}[section]
\newtheorem{lemma}{Lemma}[section]
\newtheorem{proposition}{Proposition}[section]
\newtheorem{remark}{Remark}[section]
\newcommand{\bu}{\bm{u}}
\newcommand{\bsigma}{\bm{\sigma}}
\newcommand{\bbeta}{\bm{\eta}}
\newcommand{\teta}{\tilde{\bbeta}}
\newcommand{\bd}{\bm{d}}
\newcommand{\bv}{\bm{v}}
\newcommand{\bV}{\bm{V}}
\newcommand{\bq}{\bm{q}}
\newcommand{\bS}{\bm{S}}
\newcommand{\bD}{\bm{D}}
\newcommand{\bX}{\bm{X}}
\newcommand{\bU}{\bm{U}}
\newcommand{\bnu}{\bm{\nu}}
\newcommand{\btau}{\bm{\tau}}
\newcommand{\bpsi}{\bm{\psi}}
\newcommand{\bphi}{\bm{\phi}}
\newcommand{\bA}{\bm{A}}
\newcommand{\bB}{\bm{B}}
\newcommand{\bvs}{\bv^{*}}
\newcommand{\bVs}{\bV^{*}}
\newcommand{\bw}{\bm{w}}
\newcommand{\bL}{\bm{L}}
\newcommand{\bH}{\bm{H}}
\newcommand{\bC}{\bm{C}}
\newcommand{\bW}{\bm{W}}
\newcommand{\br}{\bm{r}}
\newcommand{\dt}{\mathrm{d} t}
\newcommand{\dz}{\mathrm{d} z}
\newcommand{\ds}{\mathrm{d} s}
\newcommand{\rd}{\ \mathrm{d}}
\newcommand{\pt}{\partial_{t}}
\newcommand{\ptt}{\partial_{tt}}
\newcommand{\nuS}{\bnu_{S}}
\newcommand{\nuF}{\bnu_{F}}
\newcommand{\tauF}{\btau_{F}}
\newcommand{\JNn}{J_{N}^{n}}
\newcommand{\JNN}{J_{N}}
\newcommand{\Jn}{J^{n}}
\newcommand{\JN}[1]{J_{N}^{#1}}
\newcommand{\Jeta}{J^{\bbeta}}
\newcommand{\onehalf}{\frac{1}{2}}
\newcommand{\TN}{\mathcal{T}_{N}}
\newcommand{\ad}[1]{\begin{aligned} #1 \end{aligned}}
\newcommand{\abs}[1]{\left\vert #1 \right\vert}
\newcommand{\norm}[1]{\left\Vert #1 \right\Vert}
\newcommand{\normg}[1]{\left\Vert #1 \right\Vert_{\bL^{2}(\Gamma)}^{2}}
\newcommand{\normos}[1]{\left\Vert #1 \right\Vert_{\bL^{2}(\Omega_{S})}^{2}}
\newcommand{\normgh}[1]{\left\Vert #1 \right\Vert_{\bH_0^{2}(\Gamma)}^{2}}
\newcommand{\EN}[1]{E_{N}^{#1}}
\newcommand{\DN}[1]{D_{N}^{#1}}
\newcommand{\un}[1]{\bu_{N}^{#1}}
\newcommand{\dn}[1]{\bd_{N}^{#1}}
\newcommand{\etan}[1]{\bbeta_{N}^{#1}}
\newcommand{\tetan}[1]{\teta_{N}^{#1}}
\newcommand{\vn}[1]{\bv_{N}^{#1}}
\newcommand{\VN}[1]{\bV_{N}^{#1}}
\newcommand{\AN}[1]{\bA_{N}^{#1}}
\newcommand{\wn}[1]{\bw_{N}^{#1}}
\newcommand{\uNn}{\bu_{N}}
\newcommand{\dNn}{\bd_{N}}
\newcommand{\etaNn}{\bbeta_{N}}
\newcommand{\tetaNn}{\teta_{N}}
\newcommand{\vNn}{\bv_{N}}
\newcommand{\vsNn}{\bvs_{N}}
\newcommand{\VNn}{\bV_{N}}
\newcommand{\VsNn}{\bVs_{N}}
\newcommand{\ANn}{\bA_{N}}
\newcommand{\wNn}{\bw_{N}}
\newcommand{\suNn}{\{\bu_{N}\}_{N \in \mathbb{N}}}
\newcommand{\sdNn}{\{\bd_{N}\}_{N \in \mathbb{N}}}
\newcommand{\setaNn}{\{\bbeta_{N}\}_{N \in \mathbb{N}}}
\newcommand{\svNn}{\{\bv_{N}\}_{N \in \mathbb{N}}}
\newcommand{\sVNn}{\{\bV_{N}\}_{N \in \mathbb{N}}}
\newcommand{\Dt}{\Delta t}
\newcommand{\as}[2]{a_{S}( #1 , #2 )}
\newcommand{\V}{\mathcal{V}}
\newcommand{\Ve}{\mathcal{V}^{n}}
\newcommand{\W}{\mathcal{W}}
\newcommand{\Q}{\mathcal{Q}}
\newcommand{\QE}{\mathcal{Q}^{\bbeta}(0,T)}
\newcommand{\XE}{\mathcal{X}^{\bbeta}(0,T)}
\newcommand{\XM}{\mathcal{X}_{\mathrm{max}}}
\newcommand{\cH}{\mathcal{H}}
\newcommand{\cA}{\mathcal{A}}
\newcommand{\cD}{\mathcal{D}}
\newcommand{\cF}{\mathcal{F}}
\newcommand{\intt}{\int_{0}^{T}}
\newcommand{\inttt}{\int_{n\Dt}^{(n+1)\Dt}}
\newcommand{\inner}[2]{\left\langle #1 , #2 \right\rangle}
\newcommand{\LA}{\mathcal{L}_{1}}
\newcommand{\LB}{\mathcal{L}_{2}}
\newcommand{\LE}{\mathcal{L}_{e}}
\newcommand{\sA}{\mathscr{A}}
\newcommand{\sN}{\mathscr{N}}
\newcommand{\sF}{\mathscr{F}}
\newcommand{\sH}{\mathscr{H}}
\newcommand{\rv}[1]{\left. #1 \right|}
\newcommand{\innerl}[1]{\left\langle \LE #1 , #1 \right\rangle}
\newcommand{\tvNn}{\tilde{\bv}_N}
\newcommand{\stvNn}{\{\tilde{\bv}_{N}\}_{N \in \mathbb{N}}}
\newcommand{\btv}{\tilde{\bv}}
\newcommand{\bbS}{\mathbb{S}}
\newcommand{\qN}{\bq_{N}}
\newcommand{\phiN}{\bphi_{N}}
\newcommand{\psiN}{\bpsi_{N}}
\newcommand{\tu}{\tilde{{\bu}}}
\newcommand{\tun}{{\tu}_N}
\newcommand{\tvn}{{\bv}_N^*}
\newcommand{\tVn}{{\bV}_N^*}
\newcommand{\stun}{{\bu}_N^*}
\newcommand{\OM}{\Omega_{\rm max}}
\newcommand{\TM}{\widetilde{\bm{M}}}
\newcommand{\TG}{\widetilde{\bm{G}}}
\newcommand{\by}{\bm{y}}
\newcommand{\bz}{\bm{z}}
\newcommand{\nuFN}{\bnu_{F,N}}
\newcommand{\tauFN}{\btau_{F,N}}
\newcommand{\bvphi}{\bm{\varphi}}
\newcommand{\hun}[1]{\hat{\bu}_N^{#1}}
\DeclareMathOperator*{\meas}{meas.}
\DeclareMathOperator*{\tr}{tr}
\begin{document}

\title{Existence of weak solutions to a generalized nonlinear multi-layered fluid-structure interaction problem with the Navier-slip boundary conditions}

\author{Wenjun Liu$^{a}$\footnote{Corresponding author. Email address: wjliu@nuist.edu.cn (W. Liu), Yadong.Liu@ur.de (Y. Liu), jun.yu@uvm.edu (J. Yu) },
	Yadong Liu$^{a,b}$ and Jun Yu$^{c}$\\
	a.  School of Mathematics and Statistics, Nanjing University of Information \\ Science and Technology, Nanjing 210044, China\\
	b. Fakult\"at f\"ur Mathematik, Universit\"at Regensburg, \\ 93040 Regensburg, Germany\\
	c. Department of Mathematics and Statistics, University of \\ Vermont, Burlington, USA}

\date{\today}
\maketitle

\begin{abstract}
	We consider a fluid-structure interaction problem with Navier-slip boundary conditions in which the fluid is considered as a non-Newtonian fluid and the structure is described by a nonlinear multi-layered model. The fluid domain is driven by a nonlinear elastic shell and thus is not fixed. To simplify the problem, we map the moving fluid domain into a fixed domain by applying an arbitrary Lagrange Euler mapping. Unlike the classical method by which we can consider the problem as its entirety, we utilize the time-discretization and split the problem into a fluid subproblem and a structure subproblem by an operator splitting scheme. Since the structure subproblem is nonlinear, Lax-Milgram lemma does not hold. Here we prove the existence and uniqueness by means of the traditional semigroup theory. Noticing that the Non-Newtonian fluid possesses a $ p- $Laplacian structure, we show the existence and uniqueness of solutions to the fluid subproblem by considering the Browder-Minty theorem. With the uniform energy estimates, we deduce the weak and weak* convergence respectively. By a generalized Aubin-Lions-Simon Lemma proposed by Muha and Cani\'c [J. Differential Equations {\bf 266} (2019), 8370--8418], we obtain the strong convergence. Finally, we construct the test functions and pass the approximate weak formulation to the limit as time step goes to zero with the convergence results.
\end{abstract}

\noindent {\it 2010 Mathematics Subject Classification:} 74F10, 35D30, 76A05, 35Q30. \\
\noindent {\bf Keywords:} Fluid-structure interaction, Incompressible non-Newtonian fluid, Weak solution, Navier slip condition

\maketitle
\section{Introduction}
	This paper deals with a generalized multi-layered fluid-structure interaction problem with Navier-slip boundary conditions, which consists of a generalized fluid, a nonlinear thin structure and a thick structure.
	
\subsection{Model description}
	We consider a half cylindrical fluid domain $ \Omega_{F}^{\bbeta}(t) $ composed by a moving boundary $ \Gamma^{\bbeta}(t) $ and three rigid boundaries $ \Gamma_{\rm in} \cup \Gamma_{\rm out} \cup \Gamma_{\rm b} =: \Sigma $, i.e., $ \partial \Omega_{F}^{\bbeta}(t) = \Gamma^{\bbeta}(t) \cup \Sigma $ (see Figure \ref{geometry}). The displacement of thin structure is depicted by $ \bbeta : [0,T) \times \Gamma \rightarrow \mathbb{R}^2 $. Assume that the length of fluid domain is $ L $ and the reference radius is $ r = 1 $. Then we have the parameterized fluid domain as
	$$
		\Omega_{F}^{\bbeta}(t) = \left\{ (z,r) \in \mathbb{R}^2: z \in (0,L), r \in (0, 1 + \bbeta \cdot \nu) \right\},
	$$
	where $ \nu = (0,1) $ and the interface boundary as
	$$
		\Gamma^{\bbeta}(t) = \left\{ (z,r) \in \mathbb{R}^2: z \in (0,L), r = 1 + \bbeta \cdot \nu  \right\}.
	$$
	
	\begin{figure}[h]
		\centering
		\includegraphics[width = 0.6 \textwidth]{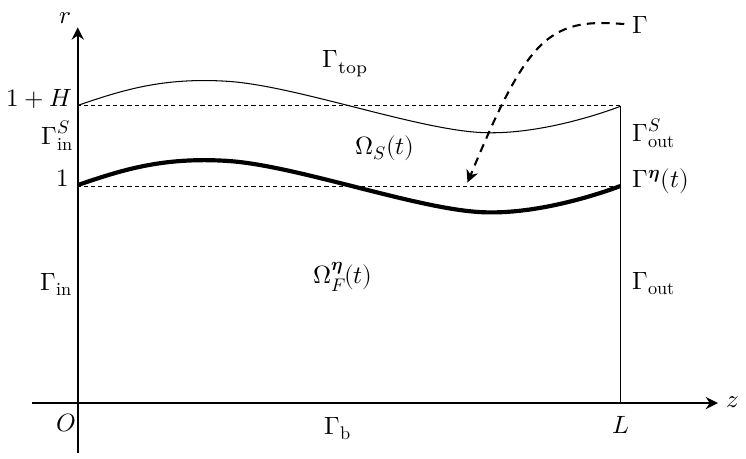}
		\caption{Geometry of fluid-structure interaction problem}
		\label{geometry}
	\end{figure}
	Subsequently, we model fluid motion by the two dimensional incompressible Navier-Stokes equations in $ \Omega_{F}^{\bbeta}(t) $:
	\begin{equation}\label{NS}
		\left.
			\ad{
				\pt \bu + \bu \cdot \nabla \bu & = \nabla \cdot \bsigma \\
				\nabla \cdot \bu & = 0
			}
		\right\}
		\text{ in } (0,T) \times \Omega_{F}^{\bbeta}(t),
	\end{equation}
	where $ \bu $ is the fluid velocity, $ \bsigma = - \pi \mathbb{I} + 2 \bbS(\bD(\bu))$ denotes the stress tensor, $ \pi $ is the fluid pressure. $\bbS(\bD(\bu)) = \mu_{F} ( \bD(\bu) ) \bD(\bu)$ represents the viscous effects with viscosity $ \mu_{F} ( \bD(\bu) ) $, which is a nonlinear term. $\bD(\bu) = \onehalf(\nabla\bu + \nabla^{T}\bu) $ is the symmetric gradient.
	In this manuscript, we consider the non-Newtonian fluid whose viscosity is the so called `power law' proposed by Carreau in his Ph.D. Thesis (see also e.g., \cite{AM1973,Galdi}):
	$$
		\mu_{F} ( \bD(\bu) ) = (1+\abs{\bD(\bu)}^{2})^{\frac{p-2}{2}},\quad 2 < p < \infty.
	$$
	The associated initial date of problem \eqref{NS} is
	$$
		\bu(0, \cdot) = \bu_{0}.
	$$
	
	On the rigid part of the fluid domain boundaries, we have
	\begin{gather}
		\left.
		\ad{\bu \cdot \nuF & = 0 \\
			\partial_{\bnu} \bu_{\btau} & = 0
		}
		\right\}
		\text{ on } (0,T) \times \Gamma_{\rm b}, \\
		\left.
		\ad{\pi + \onehalf \abs{\bu}^{2} & = P_{\rm in/out}(t) \\
			\bu \cdot \tauF & = 0
		}
		\right\}
		\text{ on } (0,T) \times \Gamma_{\rm in/out},
	\end{gather}
	where $ \nuF $ and $ \tauF $ are outer normal and tangential vectors of fluid domain respectively, and $ P_{\rm in/out}(t) $ denotes $ P_{\rm in}(t) $ or $ P_{\rm out}(t) $, providing ``inlet'' or ``outlet'' boundary data.
	
	On the elastic part of fluid domain boundary, interaction between $ \bu $ and $ \bbeta $ occurs. Let $ \Gamma = \Gamma^{\bbeta}(0) = (0,L) $ be the Lagrangian domain. Then the thin structure elastodynamic problem is given by
	\begin{align}
		 \ptt \bbeta + \LE \bbeta + f (\bbeta) & = \bm{h}, && \text{ on } (0,T) \times \Gamma, \label{eta}\\
		\bbeta & = \bm{0}, && \text{ on } (0,T) \times \partial \Gamma,
	\end{align}
	where $ f $ is a nonlinear term that will be assigned later. $ \LE $ is a continuous, self-adjoint, coercive, linear operator defined on $ \left[ H_0^2(\Gamma) \right]^2 $ such that
	$$
		\inner{\LE \bbeta}{\bbeta} \geq \delta_0 \norm{\bbeta}_{\left[ H_0^2(\Gamma) \right]^2}^2,\quad \forall\ \bbeta \in \left[ H_0^2(\Gamma) \right]^2,
	$$
	with $ \delta_0 $ be a positive constant, where $ \inner{\cdot}{\cdot} $ is the duality pairing between $ \left[ H_0^2(\Gamma) \right]^2 $ and $ \left[ H^{-2}(\Gamma) \right]^2 $.
	
	The other side of thin structure is the thick structure with thickness $ H $. We define the thick structure domain as
	$$
		\Omega_{S} = (0,L) \times (1, 1+H),
	$$
	with boundary $ \partial \Omega_{S} = \Gamma \cup \Gamma_{\rm in/out}^S \cup \Gamma_{\rm top} $. By Lagrangian formulation, the motion of thick layer defined on $ \Omega_{S} $ is described by a linear elastic equation:
	\begin{align}
		& \ptt \bd = \nabla \cdot \bS, && \text{ in } (0,T) \times \Omega_{S}, \label{dS}
	\end{align}
	with boundary conditions:
	\begin{align}
		& \bd = \bm{0}, && \text{ on } (0,T) \times \Gamma_{\rm in/out}^{S}, \\
		& \bS \nuS = \bm{0}, && \text{ on } (0,T) \times \Gamma_{\rm top}, \label{Gammatop}
	\end{align}
	where $ \bd $ denotes the displacement of thick structure, $ \bS $ is the first Piola-Kirchhoff stress tensor given by $ \bS = 2 \mu_{S} \bD(\bd) + \lambda (\nabla \cdot \bd) \mathbb{I} $ and $ \mu_{S} > 0 $ is the elasticity of thick structure.

	Moreover, follows are the \textbf{coupled} conditions.
	\begin{itemize}
		\item The kinematic conditions:
		\begin{align}
			& \bu \cdot \nuF = \pt \bbeta \cdot \nuF \ (\text{Continuity of normal velocity on } \Gamma^{\bbeta}(t)), \\
			& ( \pt \bbeta - \bu ) \cdot \tauF = \alpha \bsigma \nuF \cdot \tauF \ (\text{Slip effect on } \Gamma^{\bbeta}(t)), \\
			& \bd = \bbeta \ (\text{Continuity of displacements on } \Gamma). \label{deta}
		\end{align}
		\item The dynamic coupling condition:
		\begin{equation*}
			\bm{h} = - J_F^{\bbeta} \bsigma \nuF - \bS \nuS, \ \text{ on } (0,T) \times \Gamma,
		\end{equation*}
	\end{itemize}
	where $ \alpha > 0 $ is the ratio constant of the slip effect and it is assumed to be suitably small for convenience in the proof of Lemma \ref{fluidpro}. $ J_F^{\bbeta} $ denotes the Jacobian of transformation from Eulerian to Lagrangian formulations.
	$ \bS $ is the stress acted on thin structure from thick structure	 and $ \nuS $ is the outer normal vector of thick structure. We note here that on the interface, $ \nuF = - \nuS $.
	
	In addition, problem \eqref{NS}--\eqref{deta} satisfies the initial conditions
	\begin{equation}
		\bu(0,\cdot) = \bu_{0},\quad \bbeta(0,\cdot) = \bbeta_{0},\quad \pt\bbeta(0,\cdot) = \bv_{0},\quad \bd(0,\cdot) = \bd_{0}, \quad \pt \bd(0,\cdot) = \bV_{0},
	\end{equation}
	and necessary compatibility conditions (see \cite{MC2016JDE})
	\begin{itemize}
		\item The initial fluid velocity must satisfy:
		\begin{equation}\label{u0}
			\ad{
				& \bu_{0} \in  L^2(\Omega_{F}^{0})^2, \  \nabla \cdot \bu_{0} = 0,  && \text{ in } \Omega_{F}^0, \\
				& \bu_{0} \cdot \nu = 0, && \text{ on } \Gamma_{\rm b}, \\
				& \bu_{0} \cdot \bnu_{0} = \bv_{0} \cdot \bnu_{0}, && \text{ on } \Gamma^0,
			}
		\end{equation}
		where $ \Omega_{F}^0 = \Omega_{F}^{\bbeta}(0) $, $ \Gamma^0 = \Gamma^{\bbeta}(0) $, $ \bnu_{0} = \bnu^{\bbeta}(0, \cdot) $.
		\item The initial domain must be such that there exists a diffeomorphism $ \bvphi^0 \in C^1(\overline{\Omega})_{F} $ such that
		\begin{equation}\label{phi0}
			\bvphi^0 (\Omega_{F}) = \Omega_{F}^0, \  \det \nabla \bvphi^0 > 0, \  \rv{\left(\bvphi - \bm{I}\right)}_{\Gamma} = \bbeta_{0},
		\end{equation}
		and the initial displacement $ \bbeta_{0} $ is such that
		\begin{equation}\label{eta0}
			\norm{\bbeta_{0}}_{H^{11/6}} \leq c, \text{ where } c \text{ is small}.
		\end{equation}
	\end{itemize}
	
\subsection{Motivation}
	In recent years, mathematical problems of fluid-structure interaction have been studied continuously. These problems arise from several applications in different fields, such as biomechanics, blood flow dynamics, aeroelasticity, hydroelasticity and so on. There are many research works that are investigating the problems from different aspects in the area of analysis and numerical simulations \cite{Bm2016,Bc2020}.
	For fluid-elastic interaction problem with strong solutions results, Beir\~{a}o da Veiga \cite{BDV2004} consider 2D fluid and 1D linear elastic model with periodic boundary conditions. They proved the local strong solutions by the linearization of fluid equation and fixed point theorem. Coutand and Shkoller showed the existence of a unique regular local solution of a 3D fluid-3D structure (linear \cite{CS2005} or quasi-linear \cite{CS2006} elasticity) immersed in the fluid. Then Cheng and Shkoller \cite{ChS2010} extended the strong solution results to nonliear elastic Koiter shell. Lequeurre \cite{Lequeurre2011,Lequeurre2013} expanded the results of Beir\~{a}o da Veiga. They obtained the existence of a unique, local in time, strong solution for any data, and showed the globally existence of strong solutions with small initial data. Later, Grandmont and Hillairet \cite{GH2016} considered a 2d fluid equation coupled with elastic beam equation and obatin the globally strong solution by means of a regularized system. Subsequently, Grandmont, Hillairet and Lequeurre \cite{GHL2019} generalized the beam equation in \cite{GH2016} with different parameters combination. They improved the existence result of strong solutions with the same regularity of initial data by a regularization method. More results concerning strong solution can be found in \cite{IKLT2014,IKLT2017,KT2012b,KTZ2009,QGY2020,QY2018}.
	
	For weak solutions of fluid-elastic interaction problems, there are also many interesting results. Chambolle, Desjardins, Esteban and Grandmont \cite{CDEG2005} investigated an interaction problem between an incompressible fluid and a viscoelastic plate in 3D. After that, Grandmont \cite{Grandmont2008} studied a 2D incompressible fluid interacted with a 1D elastic plate. They constructed a perturbed viscoelastic term in plate equation and analyzed the limiting problem. In \cite{Lengeler2014}, Lengeler and R\r{u}\v{z}i\v{c}ka took a nature method to discuss the compactness issue in a 3D fluid-Koiter shell interaction problem. All the results above were obtained by constructing a regularized system and making energy estimates so that to pass variables to the limit with compactness principle. At the same time, Muha and \v{C}ani\'{c} \cite{CGM2020,MC2013,MC2013ARMA,MC2014JDE,MC2016JDE} studied the existence of weak solutions to a series of different fluid-structure interaction problems involving incompressible viscous fluid. They came up with a numerical-like way to prove this type of problems inspired by the numerical scheme in \cite{GGCC2009}. Their methods includes taking Arbitrary Lagrangian Euler mapping (ALE) to fix moving fluid domain, splitting the problem by Lie operator splitting, constructing approximation solutions with the idea of time-discrete iterative solution and proving the existence of fluid subproblem and structure subproblem respectively, so as to show the existence of the weak solution according to the compactness principle. More specifically, they discussed the interaction between 2D case in \cite{MC2013ARMA}, while in \cite{MC2013,MC2015} 3D cylindrical case, and linear, nonlinear Koiter shell equations were studied respectively. In \cite{MC2014JDE}, they considered a more realistic model associated with human arteries vessel which contains multi-layers. The model was abstracted into a 2D fluid interacting with a multi-layered structure including a 1D thin and a 2D thick elastic structure. In \cite{MC2016JDE}, Muha and \v{C}ani\'{c} considered different boundaries and took Navier-slip boundary into account in the system, which will allow both longitudinal and tangential components of displacement. The system was simplified to a two-dimensional case for subsequent analysis. Later, Trifunovi\'{c} and Wang \cite{TW2018} combined this method with a hybrid approxiamtion scheme to deal with a 3D incompressible fluid coupled with a nonlinear plate equation. They used the Galerkin method for the structure subproblem and passed both spatial variable $ k $ and time discrete variable $ \Dt $ to the limit and obtained the existence of the weak solutions. Subsequently, they consider an interaction problem between a viscous fluid and a thermoelastic plate \cite{TW2020}. The latest work was \cite{CGM2020} done by \v{C}ani\'{c},  Gali\'{c} and Muha. They addressed a 3D nonlinear fluid-mesh-shell interaction problem with moving boundary, in which they added a net of 1D hyperbolic equations to model the elastodynamics of an elastic mesh of curved rods. The results improved the simpler problem \cite{CGLMTW2019} defined in a fixed fluid domain.
		
	In this paper, we consider a generalized multi-layered fluid-structure interaction problem with Navier-slip boundarys condition, in which 2D \textit{non-Newtonian} fluid is bounded on one side by a \textit{nonlinear} thin structure and a linear elastic thick structure, while the interaction of fluid and thin structure is driven by \textit{Navier-slip} effects. In all the studies mentioned above, the viscosity of fluid was treated as a constant. However, in nature, ideal fluid does not exist, which means the viscosity of the fluid decreases with the increase of the shear strain rate (pseudoplastic, shear thinning), while in other case it behaves just the opposite (Dilatant, shear thickening). Therefore, we want to study the fluid-structure interaction problem for non-Newtonian fluid and multi-layered structure in order to model blood flow in human artery. For the work of non-Newtonian fluid-structure interaction problems, we notice that Lengeler \cite{Lengeler2014} generalized the viscous Newtonian fluid in \cite{LR2014} to a non-Newtonian fluid interacting with a linear elastic Koiter shell. They introduced a shear-dependent viscosity, which obeys ``power law'', and resolved the issue of additional stress (non-Newtonain) limit. Finally, they used the regularized system to obtain the relative compact for $ p > \frac{3}{2} $. In \cite{HLN2016}, Hundertmark-Zau\v{s}kov\'{a}, Luk\'{a}\v{c}ov\'{a}-Medvi\v{d}ov\'{a} and Ne\v{c}asov\'{a} set $ p > 2 $ in power law, which means the fluid is shear-thickening, and investigated the existence of weak solutions by fixed point procedure. The techniques dealing with non-Newtonain limit can also be found in \cite{Galdi,MNRR1996,Wolf2007}.
	
\subsection{Methodology and features}
	\label{methodology}
	In present work, we analyze \eqref{NS}--\eqref{eta0} by the method proposed by Muha and \v{C}ani\'{c}. More specifically, there are the following steps:
	\begin{itemize}
		\item Applying the ALE mapping to the problem and obtaining the weak formulation in flxed reference fluid domain $ \Omega_{F} $, see Section \ref{ALE};
		\item Taking Lie splitting method to decompose system \eqref{NS}--\eqref{eta0} into a fluid subproblem and a structure subproblem, showing the existence and uniqueness for both subproblems in each time subinterval and deriving the uniform estimates, see Sections \ref{splitting}--\ref{unifromestimates};
		\item Concluding the weak and weak* convergences from the uniform boundedness, see Sections \ref{uniformboundedness}--\ref{weakconvergence};
		\item Combining the compactness Lemma and compact embeddings to derive the strong convergences of velocities, displacement and geometry parameters, see Section \ref{strongconvergence};
		\item Passing to the limit as $ N \rightarrow \infty $, see Section \ref{limitproblem}.
	\end{itemize}
	
	Besides the procedure of Muha and \v{C}ani\'{c}, we have the following features when solving our problem:
	\begin{enumerate}
		\item[(i)] Muha and \v{C}ani\'{c} used Lax-Milgram theorem to show the existence and uniqueness of subproblem in \cite{MC2013,MC2013ARMA,MC2014JDE,MC2016JDE}. However, in our paper, Lax-Milgram Theorem does not hold due to the nonlinear term $ f(\bbeta) $. Unlike \cite{TW2018}, in which Galerkin method was used, we prove the existence and uniqueness of solutions to system \eqref{subf} by means of the traditional semigroup theory \cite{Pazy1983}.
		\item[(ii)] Since the Non-Newtonian constitutive relation is nonlinear, we can not apply the standard Lax-Milgram Lemma. We notice the $ p- $structure of fluid subproblem and show that using the Browder-Minty theorem \cite[Theorem 9.14--1]{Ciarlet2013} for $ p > 2 $ works well in our problem.
		\item[(iii)] When we summarize the weak and weak* convergence, we get the $ L^p $ weak convergence for symmetric gradient $ \bD^{\etaNn}(\uNn) $ and $ L^q $ weak convergence ($ \frac{1}{p} + \frac{1}{q} = 1 $) for $ \bbS (\bD^{\etaNn}(\uNn)) $. Their limit cannot be deduced directly due to the variant subscript $ \etaNn $ (related to fluid domain) and nonlinearity from $ \bbS $. We modified the proof of Proposition 7.6 in \cite{MC2014JDE} by introducing the localized Minty's Trick to obtain the limit.
	\end{enumerate}
	
\section{Preliminaries and main result}
	Since problem \eqref{NS}--\eqref{eta0} is defined in a moving fluid domain which is part pf unknowns, we can not define its weak solutions directly. To overcome this difficult, we introduce an Arbitrary Lagrangian Eulerian (ALE) mapping, which is common in numerical simulations of fluid-structure interaction problems. This maps our problem to a fixed domain so that we can carry out our analysis. There are many applications of ALE mapping in fluid-structure problems, see e.g., \cite{CGM2020,MC2013,MC2013ARMA,MC2014JDE,MC2015,MC2016JDE,MC2019JDE}.
	
	Before performing the ALE mapping in Section \ref{ALE}, we provied some useful facts and assumptions in Section \ref{facts} and the energy differential inequality associated with \eqref{NS}--\eqref{eta0} in Section \ref{energyi}. Sections \ref{transformation}--\ref{spacesetting} are devoted to transformation and space settings, respectively. Finally, our main result (Theorem \ref{mainresult}) is presented in Section \ref{weakmain}.
	
\subsection{Some useful facts and assumptions} \label{facts}
	\begin{lemma}
		It can be easily checked that for $ p > 2 $, $ \bbS $ satisfies \cite{Galdi}
		\begin{enumerate}
			\item Coercivity:
			\begin{equation}\label{Coercivity}
				\bbS(\bD) : \bD \geq \kappa_{1} \bD^{p} - \kappa_{2};
			\end{equation}
			\item Growth:
			\begin{equation}\label{Boundedness}
				\abs{ \bbS(\bD) } \leq \kappa_{3} \left( \abs{\bD}^{p-1} + 1 \right);
			\end{equation}
			\item Monotonicity:
			\begin{equation}\label{Monotonicity}
				\left( \bbS(\bD_{1}) - \bbS(\bD_{2}) \right) : \left( \bD_{1} - \bD_{2} \right) > 0, \text{ if } \bD_{1} \neq \bD_{2}.
			\end{equation}
		\end{enumerate}
		Here, $ \bD_{1} = \bD(\bu_{1}) $, $ \bD_{2} = \bD(\bu_{2}) $ and the notations $ \kappa_{i},\ i = 1,2,3 $ are constants, depending at most on $ p $, such that $ \kappa_{i} > 0,\ i = 1,3 $ and $ \kappa_{2} \in \mathbb{R} $. We remark that these three results will used in the proof of existence of the system.
	\end{lemma}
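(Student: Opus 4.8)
The plan is to read off all three properties directly from the explicit constitutive law $\bbS(\bD) = (1+\abs{\bD}^{2})^{\frac{p-2}{2}}\bD$, using that the scalar factor is bounded below by $1$ and that $\bD \mapsto (1+\abs{\bD}^{2})^{\frac{p-2}{2}}\bD$ is a $C^{\infty}$ map on symmetric matrices (legitimate since $1+\abs{\bD}^{2}\ge 1>0$).

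For the coercivity \eqref{Coercivity}, I would simply compute $\bbS(\bD):\bD = (1+\abs{\bD}^{2})^{\frac{p-2}{2}}\abs{\bD}^{2}$ and use $(p-2)/2>0$ to get $(1+\abs{\bD}^{2})^{\frac{p-2}{2}}\ge (\abs{\bD}^{2})^{\frac{p-2}{2}} = \abs{\bD}^{p-2}$, hence $\bbS(\bD):\bD \ge \abs{\bD}^{p}$; thus $\kappa_{1}=1$, $\kappa_{2}=0$ work, while a variant with $(1+\abs{\bD}^{2})^{p/2}$ on the right (at the cost of a positive $\kappa_{2}$) follows from the elementary inequality $(1+\abs{\bD}^{2})^{p/2}\le 2^{p-1}(1+\abs{\bD}^{p})$. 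For the growth bound \eqref{Boundedness}, I would estimate $\abs{\bbS(\bD)} = (1+\abs{\bD}^{2})^{\frac{p-2}{2}}\abs{\bD} \le (1+\abs{\bD})^{p-2}\abs{\bD} \le (1+\abs{\bD})^{p-1}$, using $1+\abs{\bD}^{2}\le(1+\abs{\bD})^{2}$, and then the convexity estimate $(a+b)^{s}\le 2^{s-1}(a^{s}+b^{s})$ with $s=p-1\ge 1$ to conclude $\abs{\bbS(\bD)} \le 2^{p-2}(1+\abs{\bD}^{p-1})$, i.e.\ $\kappa_{3}=2^{p-2}$.

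The only real work is the strict monotonicity \eqref{Monotonicity}, which I expect to be the main (though still mild) obstacle. Writing $\bB = \bD_{1}-\bD_{2}$ and $\bD(t) = \bD_{2}+t\bB$ for $t\in[0,1]$, the fundamental theorem of calculus (legitimate by the smoothness noted above) gives
\begin{equation*}
(\bbS(\bD_{1})-\bbS(\bD_{2})):\bB = \int_{0}^{1}\sum_{i,j,k,l}\frac{\partial\bbS_{ij}}{\partial\bD_{kl}}(\bD(t))\,\bB_{kl}\bB_{ij}\,\dt .
\end{equation*}
A direct differentiation gives $\frac{\partial\bbS_{ij}}{\partial\bD_{kl}}(\bD) = (1+\abs{\bD}^{2})^{\frac{p-2}{2}}\delta_{ik}\delta_{jl} + (p-2)(1+\abs{\bD}^{2})^{\frac{p-4}{2}}\bD_{kl}\bD_{ij}$, so that for every symmetric $\bB$
\begin{equation*}
\sum_{i,j,k,l}\frac{\partial\bbS_{ij}}{\partial\bD_{kl}}(\bD)\,\bB_{kl}\bB_{ij} = (1+\abs{\bD}^{2})^{\frac{p-2}{2}}\abs{\bB}^{2} + (p-2)(1+\abs{\bD}^{2})^{\frac{p-4}{2}}(\bD:\bB)^{2}\ \ge\ \abs{\bB}^{2},
\end{equation*}
since $p>2$ makes the second term nonnegative and $(1+\abs{\bD}^{2})^{\frac{p-2}{2}}\ge 1$. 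Substituting this into the integral yields $(\bbS(\bD_{1})-\bbS(\bD_{2})):(\bD_{1}-\bD_{2})\ge\abs{\bD_{1}-\bD_{2}}^{2}$, which is strictly positive whenever $\bD_{1}\neq\bD_{2}$, establishing \eqref{Monotonicity} (indeed in the stronger, quantitative form that is convenient for the Browder--Minty argument later). An equivalent route is to observe that $\bbS = \nabla_{\bD}\Phi$ for the strictly convex potential $\Phi(\bD) = \int_{0}^{\abs{\bD}}(1+s^{2})^{\frac{p-2}{2}}s\,\ds$, and to read off monotonicity from strict convexity of $\Phi$. The index bookkeeping in the derivative tensor is the only slightly technical point; everything else is elementary.
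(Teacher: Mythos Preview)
Your proof is correct. All three verifications are sound: the coercivity and growth bounds follow by the elementary inequalities you use, and the monotonicity argument via the integral representation and the explicit Jacobian computation is clean and even yields the quantitative lower bound $(\bbS(\bD_{1})-\bbS(\bD_{2})):(\bD_{1}-\bD_{2})\ge\abs{\bD_{1}-\bD_{2}}^{2}$, which is stronger than what the lemma asserts.

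As for comparison with the paper: there is nothing to compare. The paper does not prove this lemma at all; it simply states the three properties and cites \cite{Galdi} for them. Your writeup therefore supplies a complete, self-contained verification where the paper defers to the literature. The approach you take (direct computation for coercivity and growth, differentiation along a segment for monotonicity) is the standard one for Carreau-type laws and is exactly what one would find in the cited reference.
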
	
	
	\textbf{Assumptions:}
	
%
%
%
	\begin{enumerate}[label=(f\arabic*)]
		\item \label{f1} $ f $ is locally Lipschitz from $ \bH^{2-\epsilon} $ into $ H^{2} $, namely, there exists a constant $ C_R > 0 $ suitably small such that
		$$
			\norm{f(\bbeta_{1}) - f(\bbeta_{2})}_{H^{2}(\Gamma)} \leq C_R \norm{\bbeta_{1} - \bbeta_{2}}_{\bH^{2-\epsilon}(\Gamma)},
		$$
		for some $ \epsilon > 0 $ and for every $ \norm{\bbeta_{i}}_{\bH^{2-\epsilon}(\Gamma)} \leq R \  (i = 1,2) $.
	\end{enumerate}

	\begin{remark}
		Since the bound of approximate solutions $ \etaNn $ is obtained in $ \bH_0^2(\Gamma) $, the assumption \ref{f1} is used to pass to the limit of nonlinear term $ f(\etaNn) $. We note here that this assumption is a little bit stronger for the nonlinear term $ f(\bbeta) $, while in \cite{TW2018}, Trifunovi\'{c} and Wang made two weaker Lipschitz assumptions for $ f(\bbeta) $ from $\bH_0^{2-\epsilon}(\Gamma) $ into $ H^{-2}(\Gamma) $ for some $ \epsilon > 0 $ and $\bH_0^2(\Gamma) $ into $ H^{-a}(\Gamma) $ for some $ 0 \leq a < 2 $, i.e.,
		\begin{subequations}
			\begin{align}
				& \norm{f(\bbeta_{1}) - f(\bbeta_{2})}_{H^{-2}}
				\leq C_R \norm{\bbeta_{1} - \bbeta_{2}}_{\bH^{2-\epsilon}(\Gamma)},
				\label{a1}\\
				& \norm{f(\bbeta_{1}) - f(\bbeta_{2})}_{H^{-a}}
				\leq C_R \norm{\bbeta_{1} - \bbeta_{2}}_{\bH^2(\Gamma)}. \label{a2}
			\end{align}
		\end{subequations}
		$ \eqref{a1} $ is used to pass the convergence of $ f(\bbeta) $ when the bound of approximate solutions $ \etaNn $ is obtained in $ \bH_0^2(\Gamma) $. It is a weaker Lipschitz condition than \ref{f1}.
		$ \eqref{a2} $ depicts the order of nonlinearity precisely and is useful in determining the minimal time, which is related to the approximate solution's convergence. This requirement is from the Galerkin approximation method in \cite{TW2018} to handle the nonlinear structure subproblem. However, instead of using this ``hybrid approximation scheme'' as in \cite{TW2018}, we choose the classical semigroup method in our study for the structure subproblem. Thus, we make a stronger assumption and need only one Lipschitz condition.
	\end{remark}

\subsection{Energy differential inequality}\label{energyi}
	
	For simplicity, we denote the bilinear form associated with the elastic energy of the thick structure by
	$$
		\as{\bd}{\bpsi} = \int_{\Omega_{S}} 2 \mu_{S} \bD(\bd) : \bD(\bpsi) + \lambda (\nabla \cdot \bd) (\nabla \cdot \bpsi).
	$$
	
	\begin{remark}\label{identity}
		For any two vectors $ \bd_1 $ and $ \bd_2 $, we have
		$$
			\as{\bd_1}{\bd_1 - \bd_2} = \onehalf \left( \as{\bd_1}{\bd_1} + \as{\bd_1 - \bd_2}{\bd_1 - \bd_2} - \as{\bd_2}{\bd_2} \right),
		$$
		where we used $ \bA : (\bA - \bB) = \onehalf \left( \bA : \bA + (\bA - \bB) : (\bA - \bB) - \bB : \bB \right) $ and $ a(a - b) = \onehalf ( a^2 + (a - b)^2 - b^2 ) $. Here $ \bA, \bB $ are vectors and $ a, b $ are scalar quantities.
	\end{remark}
	
	Next, we derive the energy for \eqref{NS}--\eqref{eta0}. Multiplying \eqref{NS}, \eqref{eta}, \eqref{dS} by $ \bu $, $ \pt \bbeta $, $ \pt \bd $ and integrating by parts over $ \Omega_{F}^{\bbeta}(t) $, $ \Gamma $, $ \Omega_{S} $, respectively, we then add the results and combine with   boundary conditions to obtain
	\begin{equation}\label{energy0}
		\frac{\rd}{\dt} E(t) + D (t) \leq \bC(P_{\rm in/out}(t)),
	\end{equation}
	where the energy $ E(t) $ and dissipation $ D(t) $ are denoted by
	\begin{align*}
		&
		\ad{
			E(t) : & = \onehalf \left( \norm{\bu(t)}_{\left[L^2(\Omega_{F}^{\bbeta}(t))\right]^2}^2
			+ \norm{\pt \bbeta}_{\left[L^2(\Gamma)\right]^2}^2
			+ \delta_0 \norm{\bbeta}_{\left[H_0^2(\Gamma)\right]^2}^2 \right. \\
			& \qquad \qquad \qquad \qquad \qquad \left. + \norm{\pt \bd}_{\left[L^2(\Omega_{S})\right]^2}^2
			+ \as{\bd}{\bd} \right),
		} \\
	\end{align*}
	and
	\begin{align*}
		& D(t) : = \norm{\bu}_{\left[W^{1,p}(\Omega_{F}^{\bbeta}(t))\right]^2}^p
		+ \frac{1}{\alpha} \norm{\pt \bbeta_{\tauF^{\bbeta}} - \bu_{\tauF^{\bbeta}}}_{\left[L^2(\Gamma^{\bbeta}(t))\right]^2}^2,
	\end{align*}
	respectively.
	
\subsection{The ALE mapping} \label{ALE}
	Due to the Navier-slip effect, we consider both radial (vertical) and longitudinal displacements in this study, which creates some additional issues when we pass to the limit (see e.g., Section \ref{limitproblem}). Thus, we follow the procedure in \cite[Section 3.2]{MC2016JDE} to define our ALE mapping. First, let the corresponding deformation of the elastic boundary be denoted by $ \bvphi^{\bbeta} $, i.e.,
	$$
		\bvphi ^{\bbeta}(t, z) = \textbf{\textrm{id}} + \bbeta(t,z), \quad (t,z) \in [0,T] \times \Gamma.
	$$
	Then we introduce a family of ALE mappings $ \bA_{\bbeta} $ parameterized by $ \bbeta $:
	$$
		\ad{
			\bA_{\bbeta}(t) : \Omega_{F} & \rightarrow \Omega_{F}^{\bbeta}(t), \\
			(z,r) & \mapsto (x,y) = \bA_{\bbeta}(t)(z,r),
		}
	$$
	which is defined for each $ \bbeta $ as a harmonic extension of deformation $ \bvphi^{\bbeta} $. This means that $ \bA_{\bbeta}(t) $ is the solution of the following boundary value problem on the reference domain $ \Omega_{F} $:
	\begin{align*}
		\Delta \bA_{\bbeta}(t, \cdot) & = 0 \text{ in } \Omega_{F}, \\
		\rv{\bA_{\bbeta}(t)}_{\Gamma} & = \bvphi^{\bbeta}(t, \cdot), \\
		\rv{\bA_{\bbeta}(t)}_{\Sigma} & = \textbf{\textrm{id}},
	\end{align*}
	where $ \Sigma = \partial \Omega_{F} \backslash \Gamma $ denotes the rigid part of the boundary.
	The Jacobian determinant of ALE mapping $ \bA_{\bbeta} $ is defined by
	$$
		\Jeta = \det \nabla \bA_{\bbeta} (t),
	$$
	and the ALE velocity is
	$$
		\bw^{\bbeta} = \frac{\rd}{\dt} \bA_{\bbeta}.
	$$
	
	From \cite{MC2016JDE}, we know that if the compatibility conditions \eqref{u0}, \eqref{phi0}, \eqref{eta0} hold, then it can be deduced that there exists a $ T' > 0 $ such that
	\begin{equation}
		\Jeta \geq C > 0, \quad \text{ on } (0,T') \times \Omega_{F}.
	\end{equation}
	In addition, there exists a $ T'' > 0 $ such that for $ t \in [0,T''] $, the ALE mapping $ \bA_{\bbeta} (t) $ is an injection.
	
	We note here that both conditions $ \Jeta > 0 $ and $ \bA_{\bbeta} $ is injective are to ensure that the fluid domain will not degenerate for some large time. Let $ T^* = \min \left\{ T', T'' \right\} $, then this new time determines the maximal existence time interval for the weak solution. In this sense, our weak solution exists for a maximal time $ T^* $, at which either $ \Jeta = 0 $ or $ \bA_{\bbeta} $ is no longer injective (see e.g., \cite[Fig. 3]{MC2016JDE}).
		
\subsection{Transformation settings} \label{transformation}
	In order to define our associated weak solution in the fixed domain $ \Omega_{F} $, we map functions in moving domain $ \Omega_{F}^{\bbeta}(t) $ onto the reference domain $ \Omega_{F} $ by the ALE mapping given above in Section \ref{ALE}. For functions depending on $ \bbeta $, we denote it by a superscript $ \bbeta $. Specifically, for a function $ \bm{g} $ defined on $ \Omega_{F}^{\bbeta}(t) $, whether it is a scalar or a vector, we denote it by
	$$
		\bm{g}^{\bbeta} (t,z,r) = \bm{g} (t, \bA_{\bbeta}(t)(z,r)).
	$$
	Then the gradient and divergence are given by
	$$
		\nabla^{\bbeta} \bm{g}^{\bbeta} : = \left( \nabla \bm{g} \right)^{\bbeta} = \nabla \bm{g}^{\bbeta} \left( \nabla \bA_{\bbeta} \right)^{-1},  \quad
		\nabla^{\bbeta} \cdot \bm{g}^{\bbeta} = \tr\left( \nabla^{\bbeta} \bm{g}^{\bbeta} \right)
	$$
	and the symmetric gradient is denoted by
	\begin{equation*}
		\ad{
			\bD^{\bbeta} (\bm{g}^{\bbeta}) : & = \onehalf \left( \nabla^{\bbeta} \bm{g}^{\bbeta} + \left( \nabla^{\bbeta} \bm{g}^{\bbeta} \right)^T \right) \\
			& = \onehalf \left( \left( \nabla \bm{g} \right)^{\bbeta} + \left( \left( \nabla \bm{g} \right)^{\bbeta} \right)^T \right) \\
			& = \onehalf \left( \nabla \bm{g}^{\bbeta} + \left( \nabla \bm{g}^{\bbeta} \right)^T \right) \left( \nabla \bA_{\bbeta} \right)^{-1} \\
			& = \bD (\bm{g}^{\bbeta})\left( \nabla \bA_{\bbeta} \right)^{-1}.
		}
	\end{equation*}
	Moreover, the ALE derivative on the fixed reference domain $ \Omega_{F} $ is defined by:
	\begin{equation*}
		\rv{\pt \bm{g}}_{\Omega_{F}} = \pt \bm{g} + \left( \bw^{\bbeta} \cdot \nabla \right) \bm{g}.
	\end{equation*}
	Consequently, we rewrite the Navier-Stokes equation in the ALE formulation as:
	\begin{align}\label{NSALE}
		\rv{\pt \bu}_{\Omega_{F}} + \left( \left( \bu - \bw^{\bbeta} \right) \cdot \nabla \right) \bu = \nabla \cdot \bsigma, \quad \text{ in } \Omega_{F}^{\bbeta}(t),
	\end{align}
	and
	\begin{align} \label{gradequiv}
		\nabla \bm{g} = \nabla^{\bbeta} \bm{g}^{\bbeta},
	\end{align}
	where $ \rv{\pt \bu}_{\Omega_{F}} $ and $ \bw^{\bbeta} $ are composed with $ \left( \bA_{\bbeta}(t) \right)^{-1} $ and we find that transformed divergence-free condition is
	$$
		\nabla^{\bbeta} \cdot \bu^{\bbeta} = 0.
	$$
	
	Under the above transformation with ALE mapping, we give the space settings related to weak solutions of problem \eqref{NS}--\eqref{eta0} in the next section.

\subsection{Space settings} \label{spacesetting}
	We denote the spaces $ \left[W^{k,p}\right]^2 $, $ \left[H^{p}\right]^2 $ and $ \left[L^{p}\right]^2 $ by $ \bW^{k,p} $, $ \bH^{p} $ and $ \bL^{p} $ respectively. Now, we describe the functional spaces associated with the weak solutions of problem \eqref{NS}--\eqref{eta0}.
	
	Motivated by the energy inequality \eqref{energy0} and the boundary conditions, we denote four spaces of fluid velocity, ``improved'' fluid velocity, thin structure displacement and thick structure displacement by
	\begin{equation*}
		V_{F}^{\bbeta} = \left\{ \bu^{\bbeta} \in \left[C^{1}(\overline{\Omega})\right]^2 :
		\ad{
			& \nabla^{\bbeta} \cdot \bu^{\bbeta} = 0, \bu^{\bbeta} \cdot \btau^{\bbeta} = 0, \text{ on } \Gamma_{\rm in/out}, \\
			& \bu^{\bbeta} \cdot \nuF^{\bbeta} = 0,\text{ on } \Gamma_{\rm b}
		} \right\},
	\end{equation*}
	\begin{equation*}
		\V_{F}^{\bbeta} = \overline{V_{F}^{\bbeta}}^{\bW^{1,p}(\Omega_{F})},
	\end{equation*}
	\begin{equation*}
		\V_{W} = \bH_{0}^{2}(\Gamma),
	\end{equation*}
	and
	\begin{equation*}
		\V_{S} = \left\{ \bd \in \bH^{1}(\Omega_{S}) : \bd \cdot \btau_{S} = 0, \text{ on } \Gamma, \bd = \bm{0}, \text{ on } \Gamma_{\rm in/out}^{S} \right\},
	\end{equation*}
	respectively.
	Subsequently, for $ 0 < T \leq T^* $, the associated evolution spaces for fluid, thin structure and thick structure are written as:
	\begin{equation*}
		\W_{F}^{\bbeta} = L^{\infty}(0,T; \bL^{2}(\Omega_{F})) \cap L^{p}(0,T; \V_{F}^{\bbeta}),
	\end{equation*}
	\begin{equation*}
		\W_{W} = W^{1, \infty}(0,T; \bL^{2}(\Gamma)) \cap L^{2}(0,T; \V_{W}),
	\end{equation*}
	\begin{equation*}
		\W_{S} = W^{1, \infty}(0,T; \bL^{2}(\Omega_{S})) \cap L^{2}(0,T; \V_{S}).
	\end{equation*}
	Including the \textbf{slip boundary condition}, we have the following solution space:
	\begin{equation*}
		\W^{\bbeta} = \left\{ (\bu,\bbeta,\bd) \in \W_{F}^{\bbeta} \times \W_{W} \times \W_{S} : \bu \cdot \nuF^{\bbeta} = \pt \bbeta \cdot \nuF^{\bbeta}, \bd \cdot \nuS = \bbeta \right\}.
	\end{equation*}
	The corresponding test space is denoted by
	\begin{equation} \label{testfunctionspace}
		\Q^{\bbeta} = \left\{
		\ad{
			& (\bq,\bphi,\bpsi) \in C_{c}^{1}([0,T), \V_{F}^{\bbeta} \times \V_{W} \times \V_{S}): \\
			& \qquad \bq^{\bbeta} \cdot \nuF^{\bbeta} = \bphi \cdot \nuF^{\bbeta}, \bphi = \bpsi, \text{ on }\Gamma
		} \right\}.
	\end{equation}
	
\subsection{Weak solution and main result} \label{weakmain}
	We transform problem \eqref{NS}--\eqref{eta0} by ALE mapping, so that the remainder analysis are on the reference domain $ \Omega_{F} $. To establish the definition of weak solution, we first consider the transformed Navier-Stokes equation \eqref{NSALE}. Multiplying \eqref{NSALE} by $ \bq $ and integrating it over $ \Omega_{F}^{\bbeta}(t) $, we have
	\begin{equation}\label{intptu}
		\intt \int_{\Omega_{F}^{\bbeta}(t)} \rv{\pt \bu}_{\Omega_{F}} \cdot \bq
		 + \intt \int_{\Omega_{F}^{\bbeta}(t)} \left( \left( \bu - \bw^{\bbeta} \right) \cdot \nabla \right) \bu \cdot \bq
		 = \intt \int_{\Omega_{F}^{\bbeta}(t)} \nabla \cdot \bsigma \cdot \bq,
	\end{equation}
	where we dropped the superscript $ \bbeta $ in $ \bu^{\bbeta} $ for easier reading.
	
	Integrating the second term on the left-hand side of \eqref{intptu} by parts, we get
	\begin{align*}
		& \quad \intt \int_{\Omega_{F}^{\bbeta}(t)} \left( \left( \bu - \bw^{\bbeta} \right) \cdot \nabla \right) \bu \cdot \bq \\
		& = \onehalf \intt \int_{\Omega_{F}^{\bbeta}(t)} \left( \left( \bu - \bw^{\bbeta} \right) \cdot \nabla \right) \bu \cdot \bq - \onehalf\intt \int_{\Omega_{F}^{\bbeta}(t)} \left( \left( \bu - \bw^{\bbeta} \right) \cdot \nabla \right) \bq \cdot \bu \\
		& \quad + \intt \int_{\Omega_{F}^{\bbeta}(t)} \left( \nabla \cdot \bw^{\bbeta} \right) \bu \cdot \bq + \intt \int_{\Gamma^{\bbeta}(t)} \left( \bu - \bw^{\bbeta} \right) \cdot \nuF^{\bbeta} \left( \bu \cdot \bq \right).
	\end{align*}
	For the term on the right-hand side of \eqref{intptu}, it follows from the divergence theorem that
	\begin{align*}
		- \intt \int_{\Omega_{F}^{\bbeta}(t)} \nabla \cdot \bsigma \cdot \bq = 2 \intt \int_{\Omega_{F}^{\bbeta}(t)} \bbS ( \bD(\bu) ) : \bD(\bq) - \intt \int_{\partial \Omega_{F}^{\bbeta}(t)} \bsigma \nuF^{\bbeta} \cdot \bq.
	\end{align*}
	Due to the slip effect, which leads to existence of non-zero tangential and normal component of velocity at interface boundary, we expand last term above as
	\begin{align*}
		& \intt \int_{\partial \Omega_{F}^{\bbeta}(t)} \bsigma \nuF^{\bbeta} \cdot \bq \\
		& \quad = \int_{\Gamma^{\bbeta}(t)} \biggl( \left( \bsigma \nuF^{\bbeta} \cdot \nuF^{\bbeta} \right) \bq \cdot \nuF^{\bbeta} + \left( \bsigma \nuF^{\bbeta} \cdot \tauF^{\bbeta} \right) \bq \cdot \tauF^{\bbeta} \biggr)
		+ \int_{\Gamma_{\rm in/out}} \pi \bq_{\nuF} \\
		& \quad = \int_{\Gamma^{\bbeta}(t)} \biggl( \left( \bsigma \nuF^{\bbeta} \cdot \nuF^{\bbeta} \right) \bphi \cdot \nuF^{\bbeta} + \frac{1}{\alpha} \left( \pt \bbeta - \bu \right) \cdot \tauF^{\bbeta} \left(  \bq \cdot \tauF^{\bbeta}  \right) \biggr)
		+ \int_{\Gamma_{\rm in/out}} \pi \bq_{\nuF}.
	\end{align*}
	We note here that the first term on the right-hand side cancels with the same term in thin structure equation.
	By means of integration by parts, we obtain
	\begin{equation}\label{ptuOmega}
		\ad{
			& \intt \int_{\Omega_{F}^{\bbeta}(t)} \rv{\pt \bu}_{\Omega_{F}} \cdot \bq
			= \intt \int_{\Omega_{F}} \Jeta \pt \bu \cdot \bq^{\bbeta} \\
			& \quad = - \intt \int_{\Omega_{F}} \pt \Jeta \bu \cdot \bq^{\bbeta} - \intt \int_{\Omega_{F}} \Jeta \bu \cdot \pt \bq^{\bbeta}
			- \int_{\Omega_{F}} J_0 \bu_{0} \cdot \bq^{\bbeta}(0, \cdot).
		}
	\end{equation}
	Since we have (see e.g., \cite[pp. 77]{Gurtin1981})
	\begin{equation*}
		\pt \Jeta = \Jeta \nabla^{\bbeta} \cdot \bw^{\bbeta},
	\end{equation*}
	\eqref{ptuOmega} becomes
	\begin{align*}
		& \intt \int_{\Omega_{F}^{\bbeta}(t)} \rv{\pt \bu}_{\Omega_{F}} \cdot \bq \\
		& \quad = - \intt \int_{\Omega_{F}} \biggl( \Jeta \left( \nabla^{\bbeta} \cdot \bw^{\bbeta} \right) \left( \bu \cdot \bq^{\bbeta} \right)
		- \Jeta \bu \cdot \pt \bq^{\bbeta} \biggr)
		- \int_{\Omega_{F}} J_0 \bu_{0} \cdot \bq^{\bbeta}(0, \cdot).
	\end{align*}
	
	We multiply the elastic equation of $ \bbeta $ and $ \bd $ by $ \bphi $ and $ \bpsi $, respectively, and integrate by parts over $ (0,T) \times \Gamma $ and $ (0,T) \times \Omega_{S} $, respectively. Then, we add the results with \eqref{intptu} together to obtain the weak formulation.
	
	\begin{definition}[Weak solution]\label{weaksolution}
		Assume that assumptions \ref{f1} holds. Then $ (\bu, \bbeta, \bd) \in \W^{\bbeta} $ is a weak solution of \eqref{NS}--\eqref{eta0} in $ (0,T) $ for $ 0 < T \leq T^* $, if for every $ (\bq, \bphi, \bpsi) \in \Q^{\bbeta} $, the following equality holds:
		\begin{align}
			& \quad \onehalf \intt \int_{\Omega_{F}} \Jeta\bigg(
			\left( \left( \bu - \bw^{\bbeta} \right) \cdot \nabla^{\bbeta} \right) \bu \cdot \bq
			- \left( \left( \bu - \bw^{\bbeta} \right) \cdot \nabla^{\bbeta} \right) \bq \cdot \bu
			- \left( \nabla^{\bbeta} \cdot \bw^{\bbeta} \right) \cdot \bq \cdot \bu
			\bigg) \nonumber \\
			& - \intt \int_{\Omega_{F}} \Jeta \bu \cdot \pt \bq
			+ 2 \intt \int_{\Omega_{F}} \Jeta \bbS(\bD^{\bbeta}(\bu)) : \bD^{\bbeta}(\bq)
			+ \intt \inner{f(\bbeta)}{\bphi} \nonumber \\
			& + \frac{1}{\alpha} \intt \int_{\Gamma} ( \bu_{\tauF^{\bbeta}} - \pt \bbeta_{\tauF^{\bbeta}} ) \bq_{\tauF^{\bbeta}} \Jeta_{F} \dz \dt
			- \intt \int_{\Gamma} \pt \bbeta \pt \bphi
			+ \intt \inner{\LE \bbeta}{\bphi}  \label{weakformulation}  \\
			& + \frac{1}{\alpha} \intt \int_{\Gamma} ( \pt \bbeta_{\tauF^{\bbeta}}
			- \bu_{\tauF^{\bbeta}} ) \bphi_{\tauF^{\bbeta}} \Jeta_{F} \dz \dt
			- \intt \int_{\Omega_{S}} \pt \bd \cdot \pt \bpsi
			+ \intt \as{\bd}{\bpsi} \nonumber \\
			= &\intt \int_{\Gamma_{\rm in/out}} P_{\rm in/out} \bq \cdot \nuF + \int_{\Omega_{F}}J_{0}\bu_{0} \cdot \bq(0) + \int_{\Gamma} \bv_{0} \bphi(0) + \int_{\Omega_{S}} \bV_{0} \cdot \bpsi(0). \nonumber
		\end{align}
	\end{definition}
	
	Now,  we state the main result.
	\begin{theorem}[Main result]\label{mainresult}
		Let $ P_{\rm in/out} \in L_{\rm loc}^2 (0, \infty) $. Suppose that the initial data $ \bu_{0} \in \bL^{2}(\Omega_{F}) $, $ \bbeta_{0} \in \bH_{0}^{1}(\Gamma) $, $ \bv_{0} \in \bL^{2}(\Gamma) $, $ \bd_{0} \in \bH^{1}(\Omega_{S}) $ and $ \bV_{0} \in \bL^{2}(\Omega_{S}) $ satisfy the compatibility conditions \eqref{u0}, \eqref{phi0} and \eqref{eta0}, then under assumption \ref{f1}, there exist a $ T^* > 0 $ and a weak solution $ (\bu,\bbeta,\bd) $ to  \eqref{NS}--\eqref{eta0} on $ (0,T^*) $ in the sense of Definition \ref{weaksolution} such that the following energy estimate holds:
		\begin{equation}\label{energyestimate}
			\ad{
				E(t) + \int_{0}^{t} D(s) \ds
				\leq E_{0}
				+ C \left( \norm{P_{\rm in}}_{L^2(0,T)}^2 + \norm{P_{\rm out}}_{L^2(0,T)}^2 + T^* \right).
			}
		\end{equation}
		where $ E(t) $ and $ D(t) $ are
		$$
		\ad{
			E(t) & = \onehalf \left( \norm{\bu}_{\bL^{2} (\Omega_{F}^{\bbeta}(t))}^2
			+ \normg{\pt \bbeta}
			+ \delta_0 \normgh{\bbeta}
			+ \normos{\pt \bd}
			+ \as{\bd}{\bd}
			\right), \\
			D(t) & = \norm{\bu}_{\bW^{1,p}(\Omega_{F}^{\bbeta}(t))}^p
			+ \frac{1}{\alpha}\norm{ \pt \bbeta_{\tauF^{\bbeta} } - \bu_{\tauF^{\bbeta}} }^{2}_{\bL^{2}(\Gamma^{\bbeta}(t))},
		}	
		$$
		and $ E_{0} = E(0) $.
	\end{theorem}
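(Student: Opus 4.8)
The plan is to prove Theorem~\ref{mainresult} by the time-discretization / Lie operator-splitting scheme announced in Section~\ref{methodology}. Fix $N\in\mathbb{N}$, put $\Dt=T/N$, and on each slab $[n\Dt,(n+1)\Dt]$ advance the solution in two substeps. The \emph{structure substep} solves a discretized version of the thin-structure equation \eqref{eta}, i.e. $\ptt\bbeta+\LE\bbeta+f(\bbeta)=\bm{h}$, coupled to the thick-structure equation \eqref{dS} through $\bd=\bbeta$ on $\Gamma$, together with the kinematic transfer of the normal fluid velocity; the \emph{fluid substep} solves the ALE-transformed Navier--Stokes system with the $p$-Laplacian viscous term $\bbS(\bD^{\bbeta}(\bu))$, the skew-symmetrized convective and ALE terms of \eqref{weakformulation}, and the Navier-slip condition on $\Gamma$, with the structure displacement frozen from the previous substep. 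Concatenating the substep solutions, piecewise constant/affine in time, produces approximate solutions $(\uNn,\etaNn,\dNn)$.

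For the well-posedness of the two substeps I would argue as follows. The bilinear form built from $\LE$ and $\as{\cdot}{\cdot}$ is continuous, symmetric and coercive on $\bH_0^2(\Gamma)\times\V_S$, hence in first-order form generates a $C_0$-semigroup on the energy space $\bL^2(\Gamma)\times\bH_0^2(\Gamma)\times\bL^2(\Omega_S)\times\V_S$; since by assumption~\ref{f1} the term $f(\bbeta)$ is a locally Lipschitz perturbation, the variation-of-constants formula together with a contraction argument gives a unique solution of the structure substep on each slab --- this is exactly the point where Lax--Milgram fails and the semigroup theory of \cite{Pazy1983} enters. For the fluid substep, the map $\bv\mapsto 2\Jeta\bbS(\bD^{\bbeta}(\bv))$ is, by the coercivity \eqref{Coercivity}, growth \eqref{Boundedness} and strict monotonicity \eqref{Monotonicity} of $\bbS$ together with the geometric bound $\Jeta\ge C>0$, a bounded, coercive, strictly monotone and hemicontinuous operator from the closed subspace $\V_F^{\bbeta}\subset\bW^{1,p}(\Omega_F)$ ($p>2$) into its dual; the skew-symmetrized convective and ALE terms contribute nothing to the diagonal, so the Browder--Minty theorem \cite[Theorem 9.14--1]{Ciarlet2013} yields a unique solution.

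Testing each substep with its own unknown, using Remark~\ref{identity} on the thick-structure term, assumption~\ref{f2} to control $\Pi(\bbeta)$, and $\Jeta\ge C>0$, one obtains a fully-discrete analogue of \eqref{energy0}; summing over $n$ and applying discrete Gronwall gives bounds \emph{independent of $N$}: $\uNn$ in $L^\infty(0,T;\bL^2(\Omega_F))\cap L^p(0,T;\V_F^{\bbeta})$, $\etaNn$ in $W^{1,\infty}(0,T;\bL^2(\Gamma))\cap L^\infty(0,T;\bH_0^2(\Gamma))$, $\dNn$ in $W^{1,\infty}(0,T;\bL^2(\Omega_S))\cap L^\infty(0,T;\V_S)$, the slip defect $(\pt\etaNn-\uNn)\cdot\tauF$ in $L^2(0,T;\bL^2(\Gamma))$, and, by \eqref{Boundedness}, $\bbS(\bD^{\etaNn}(\uNn))$ in $L^q$ with $1/p+1/q=1$. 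Extracting subsequences, $\uNn\rightharpoonup\bu$ weakly-$*$, $\etaNn\rightharpoonup\bbeta$, $\dNn\rightharpoonup\bd$, $\bD^{\etaNn}(\uNn)\rightharpoonup\bD$ in $L^p$, $\bbS(\bD^{\etaNn}(\uNn))\rightharpoonup\bm{\chi}$ in $L^q$. A generalized Aubin--Lions--Simon lemma, using the time-derivative estimates that come from testing with functions supported away from $\Gamma$ and from the structure equations, upgrades these to strong convergence of $\uNn$ in $L^2(0,T;\bL^2(\Omega_F))$, of $\etaNn$ in $L^2(0,T;\bH^{2-\epsilon}(\Gamma))$ (matching \ref{f1}) and $\pt\etaNn$ in weaker norms, and of $\dNn$ in $L^2(0,T;\bL^2(\Omega_S))$; strong convergence of $\etaNn$ also forces the approximate ALE maps, their gradients and Jacobians to converge uniformly to $\bA_{\bbeta}$, $\nabla\bA_{\bbeta}$, $\Jeta$, which is needed because every integrand of \eqref{weakformulation} carries the $\bbeta$-dependent geometry.

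With these convergences the convective term, the slip coupling terms, the potential term (via \ref{f1}, so $f(\etaNn)\to f(\bbeta)$), and all linear terms pass to the limit in \eqref{weakformulation}. The \textbf{main obstacle} is to identify $\bm{\chi}=\bbS(\bD^{\bbeta}(\bu))$: the usual Minty monotonicity trick cannot be applied directly because the symmetrized gradient is taken relative to the \emph{moving} geometry $\etaNn$, which itself converges, and $\bbS$ is nonlinear. I would resolve this by the \emph{localized Minty's trick} announced in feature (iii), adapting the proof of Proposition~7.6 of \cite{MC2014JDE}: on compact subsets where $\nabla\bA_{\etaNn}$ and $J_N$ converge uniformly, write $\int\Jeta\bigl(\bbS(\bD^{\etaNn}(\uNn))-\bbS(\bD^{\etaNn}(\bv))\bigr):\bigl(\bD^{\etaNn}(\uNn)-\bD^{\etaNn}(\bv)\bigr)\ge 0$ for test functions $\bv$, use the discrete energy identity to bound $\limsup\int\Jeta\bbS(\bD^{\etaNn}(\uNn)):\bD^{\etaNn}(\uNn)$ by the limiting energy terms, then take $\bv=\bu\pm\theta\bw$ and let $\theta\to 0$ to conclude $\bm{\chi}=\bbS(\bD^{\bbeta}(\bu))$ a.e. Finally, the constraints $\bu\cdot\nuF^{\bbeta}=\pt\bbeta\cdot\nuF^{\bbeta}$ and $\bd\cdot\nuS=\bbeta$ on $\Gamma$ survive by strong trace convergence, so $(\bu,\bbeta,\bd)\in\W^{\bbeta}$; the energy estimate \eqref{energyestimate} follows from the fully-discrete inequality by weak lower semicontinuity of the $\bL^2$-norms and of $\bv\mapsto\|\bv\|_{\bW^{1,p}}^p$; and the existence time is $T=\min\{T',T''\}$, the time up to which $\Jeta>0$ and $\bA_{\bbeta}$ stays injective, as recalled in Section~\ref{ALE}.
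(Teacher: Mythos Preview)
Your proposal is correct and follows essentially the same route as the paper: Lie splitting into structure and fluid substeps, semigroup theory (via \cite{Pazy1983}) for the nonlinear structure subproblem, Browder--Minty for the $p$-Laplacian fluid subproblem, uniform discrete energy estimates, the generalized Aubin--Lions--Simon lemma of \cite{MC2019JDE} for strong convergence, and the localized Minty trick of \cite{Wolf2007} to identify $\bm{\chi}=\bbS(\bD^{\bbeta}(\bu))$. The only step you do not make explicit is the construction of $N$-dependent test functions $(\qN,\phiN)$ converging to $(\bq,\bphi)$ in a dense subset $\XE\subset\QE$ (needed because the test space itself depends on $\bbeta$), but this is the standard device from \cite{CDEG2005,MC2016JDE} and fits naturally into your limiting argument.
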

	
	\section{Approximate solutions}
	\subsection{Operator splitting scheme}
	\label{splitting}
	In this section, the backward Euler scheme is used to define a sequence of approximate solutions of the fluid-structure interaction problem. For every fixed $ T > 0 $ and $ N \geq 1 $, we devide the interval $ [0,T] $ into $ N $ subintervals of length $ \Dt = \frac{T}{N} $ with $ 0 = t_0 < t_1 < \cdots < t_{N-1} < t_N = T $. In the subintervals of $ [0,T] $ we separate the problem into two parts by Lie operator splitting method as follows.
	
	First, we rewrite problem \eqref{NS}--\eqref{eta0} as
	\begin{equation*}
		\left\{
		\begin{aligned}
			& \frac{\mathrm{d} \bX}{\dt} && = && A \bX , \quad t \in (0, T), \\
			& \left. \bX \right|_{t = 0} && = && \bX^0.
		\end{aligned}
		\right.
	\end{equation*}
	where $ \bX = ( \bu, \bbeta, \bv, \bd, \bV)^{T} $ and $ \bX^{0} = (\bu_{0}, \bbeta_{0}, \bv_{0}, \bd_{0}, \bV_{0})^{T} $.
	
	Then, we decompose $ A = A_1 + A_2 $, where $ A_1 $ and $ A_2 $ are non-trivial, and for $ n = 0,1,\dots, N - 1, i = 1,2 $, we obtain
	$$
		\left\{
		\begin{aligned}
			& \frac{\mathrm{d} \bX_{N}^{n+\frac{i}{2}}}{\dt} && = && A_i \bX_{N}^{n+\frac{i}{2}}, \quad t \in (t_n, t_{n+1}) \\
			& \left. \bX_{N}^{n+\frac{i}{2}} \right|_{t = t_n} && = && \bX_{N}^{n+\frac{i-1}{2}}.
		\end{aligned}
		\right.
	$$
	which can be solved for the approximate vector solutions
	$$
		\bX_{N}^{n+\frac{i}{2}} = (\un{n+\frac{i}{2}}, \etan{n+\frac{i}{2}}, \vn{n+\frac{i}{2}}, \dn{n+\frac{i}{2}}, \VN{n+\frac{i}{2}})^{T},
	$$
	with $ \bX^{0} = (\bu_{0}, \bbeta_{0}, \bv_{0}, \bd_{0}, \bV_{0})^{T} $, where $ i = 1,2 $ denotes the solution of the structure and of the fluid subproblem, respectively.
	
	In the following, we write the subproblems under the time discretization while we omit the subscript $ N $ for simplicity.
	
\subsection{The structure subproblem}
	\label{structuresubproblem}
	In the structure subproblem, we notice that $ \bu $ does not change, then we denote
	$$
		\bu^{n+\onehalf} = \bu^n.
	$$
	Let
	$$
		\cH =  \V_{W} \times \V_{W} \times \V_{S} \times \V_{S}
	$$
	and
	$$
		\tilde{\cH} := \left\{ (\bphi, \bpsi)^T \in \V_{W} \times \V_{S} : \rv{\bpsi}_{\Gamma} = \bphi \right\}.
	$$
	Fixing $ \Dt $ and defining the solution of structure subproblem by $ (\bbeta^{n+\onehalf}, \bv^{n+\onehalf}, \bd^{n+\onehalf}, \bV^{n+\onehalf}) \in \cH $, we have the weak formulation of structure subproblem:
	
	For $ (\bbeta^{n}, \bd^{n})^T \in \tilde{\cH} $, find $ (\bbeta^{n+\onehalf}, \bv^{n+\onehalf}, \bd^{n+\onehalf}, \bV^{n+\onehalf})^T \in \cH $ such that
	\begin{equation}\label{weaks}
		\begin{gathered}
			\rv{\bd^{n+\onehalf}}_{\Gamma} = \bbeta^{n+\onehalf}, \quad \frac{\bbeta^{n+\onehalf} - \bbeta^n}{\Dt} = \bv^{n+\onehalf}, \quad \frac{\bd^{n+\onehalf} - \bd^n}{\Dt} = \bV^{n+\onehalf},\\
			\ad{
				\int_{\Gamma} \frac{\bv^{n+\onehalf} - \bv^n}{\Dt} \cdot \bphi
				& + \int_{\Omega_{S}} \frac{\bV^{n+\onehalf} - \bV^n}{\Dt} \cdot \bpsi + \inner{\LE \bbeta^{n+\onehalf}}{\bphi} \\
				& + \as{\bd^{n+\onehalf}}{\bpsi} + \int_{\Gamma} f(\bbeta^{n+\onehalf}) \cdot \bphi = 0,
			}
		\end{gathered}
	\end{equation}
	for all $ (\bphi, \bpsi)^T \in \tilde{\cH} $.
	
	In \eqref{weaks}, the equations in the first row are kinematic coupling conditions and the second row is the weak form. We solve $ (\bbeta^{n+\onehalf}, \bv^{n+\onehalf}, \bd^{n+\onehalf}, \bV^{n+\onehalf})^T \in \cH $ with $ (\bbeta^{n}, \bd^{n})^T \in \tilde{\cH} $ under the invariance of $ \bu^{n+\onehalf} = \bu^n $.
	
	\begin{lemma}\label{structurepro}
		For a fixed $ \Dt >0 $, there exists a unique weak solution $ ( \bbeta^{n+\onehalf}, \bv^{n+\onehalf}, \bd^{n+\onehalf}, \bV^{n+\onehalf} ) ^T $$\in \cH $ to subproblem \eqref{weaks} with $ (\bbeta^{n}, \bd^{n})^T \in \tilde{\cH} $.
	\end{lemma}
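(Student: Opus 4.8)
The plan is to collapse \eqref{weaks} to a single coupled elliptic equation for the displacements, recognise it as a resolvent equation for the linear elastodynamic operator perturbed by the Lipschitz term $f$, and close it by a contraction argument. First I would use the kinematic relations $\bv^{n+\onehalf}=(\bbeta^{n+\onehalf}-\bbeta^{n})/\Dt$ and $\bV^{n+\onehalf}=(\bd^{n+\onehalf}-\bd^{n})/\Dt$ to eliminate the velocities from the momentum balance in \eqref{weaks}. What remains is: find $w:=(\bbeta^{n+\onehalf},\bd^{n+\onehalf})\in\tilde{\cH}$ (so that $\bd^{n+\onehalf}|_{\Gamma}=\bbeta^{n+\onehalf}$ is encoded) such that for every $(\bphi,\bpsi)\in\tilde{\cH}$,
\begin{equation*}
\frac{1}{\Dt^{2}}\Big(\int_{\Gamma}\bbeta^{n+\onehalf}\cdot\bphi+\int_{\Omega_{S}}\bd^{n+\onehalf}\cdot\bpsi\Big)+\inner{\LE\bbeta^{n+\onehalf}}{\bphi}+\as{\bd^{n+\onehalf}}{\bpsi}+\int_{\Gamma}f(\bbeta^{n+\onehalf})\cdot\bphi=\inner{G^{n}}{(\bphi,\bpsi)},
\end{equation*}
where $G^{n}\in\tilde{\cH}'$ is the bounded functional assembled from $\bbeta^{n},\bv^{n},\bd^{n},\bV^{n}$ (all at least $\bL^{2}$, so $G^{n}$ is finite). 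Once $w$ is known, $\bv^{n+\onehalf}\in\V_{W}$ and $\bV^{n+\onehalf}\in\V_{S}$ are recovered from the kinematic relations, so $(\bbeta^{n+\onehalf},\bv^{n+\onehalf},\bd^{n+\onehalf},\bV^{n+\onehalf})\in\cH$; conversely any solution of \eqref{weaks} yields such a $w$. Hence it suffices to solve this reduced problem uniquely.

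Next I would split the left side into a linear and a nonlinear part. The bilinear form $b(w,(\bphi,\bpsi)):=\frac{1}{\Dt^{2}}(\int_{\Gamma}\bbeta\cdot\bphi+\int_{\Omega_{S}}\bd\cdot\bpsi)+\inner{\LE\bbeta}{\bphi}+\as{\bd}{\bpsi}$ is bounded, symmetric and coercive on $\tilde{\cH}$: $\inner{\LE\bbeta}{\bbeta}\ge\delta_{0}\norm{\bbeta}^{2}_{\bH_{0}^{2}(\Gamma)}$ by hypothesis, and $\as{\bd}{\bd}\ge 2\mu_{S}\norm{\bD(\bd)}^{2}_{\bL^{2}(\Omega_{S})}\ge c\norm{\bd}^{2}_{\bH^{1}(\Omega_{S})}$ by Korn's inequality (which applies since $\bd$ vanishes on $\Gamma_{\rm in/out}^{S}$), the inertial $L^{2}$-terms only adding positively. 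So by Lax--Milgram the operator $\mathcal{L}_{\Dt}:\tilde{\cH}\to\tilde{\cH}'$ induced by $b$ is an isomorphism with $\norm{\mathcal{L}_{\Dt}^{-1}}\le 1/c_{0}$ for some $c_{0}>0$ independent of $\Dt$. Equivalently --- this is the semigroup ingredient of feature (i) --- the underlying linear elastodynamic operator is minus the generator of a contraction $C_{0}$-semigroup on the energy space (Lumer--Phillips: dissipativity is the energy identity, maximality is the coercive problem just solved), so the implicit-Euler step is the action of its resolvent. The reduced problem becomes the fixed-point equation $w=\Phi(w):=\mathcal{L}_{\Dt}^{-1}(G^{n}-\mathcal{N}(w))$ on $\tilde{\cH}$, where $\mathcal{N}(w)$ is the functional $(\bphi,\bpsi)\mapsto\int_{\Gamma}f(\bbeta)\cdot\bphi$.

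To solve this, note that $\mathcal{N}$ factors as $\tilde{\cH}\hookrightarrow\bH^{2-\epsilon}(\Gamma)\xrightarrow{f}H^{2}(\Gamma)\hookrightarrow\bL^{2}(\Gamma)\hookrightarrow\tilde{\cH}'$, with $f$ Lipschitz on bounded subsets of $\bH^{2-\epsilon}(\Gamma)$ by \ref{f1} and $\V_{W}=\bH_{0}^{2}(\Gamma)$ compactly embedded in $\bH^{2-\epsilon}(\Gamma)$. Thus for $w_{1},w_{2}$ in a ball $B_{R}\subset\tilde{\cH}$,
\begin{equation*}
\norm{\Phi(w_{1})-\Phi(w_{2})}_{\tilde{\cH}}\le\frac{1}{c_{0}}\norm{\mathcal{N}(w_{1})-\mathcal{N}(w_{2})}_{\tilde{\cH}'}\le\frac{C}{c_{0}}\norm{f(\bbeta_{1})-f(\bbeta_{2})}_{\bL^{2}(\Gamma)}\le\frac{C\,C_{R}}{c_{0}}\norm{w_{1}-w_{2}}_{\tilde{\cH}},
\end{equation*}
with $C$ collecting the embedding constants. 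Thanks to the smallness built into \eqref{eta0} --- propagated through the uniform energy bounds so that the radius $R$, and hence $C_{R}$, is small --- the constant $C\,C_{R}/c_{0}$ is $<1$; combined with the a priori bound from testing the fixed-point equation with $w$ (coercivity of $b$ and boundedness of $f$ on $B_{R}$ give $\Phi(B_{R})\subset B_{R}$), the Banach fixed-point theorem yields a unique $w\in B_{R}$, hence a unique solution of \eqref{weaks}. (Alternatively, the compact embedding makes $\mathcal{N}$ and $\Phi$ compact, so existence follows from Schaefer's theorem without smallness, and uniqueness is obtained by testing the difference of two solutions with itself: coercivity controls $\norm{w_{1}-w_{2}}^{2}_{\tilde{\cH}}$ and, via Poincar\'e, $\norm{\bbeta_{1}-\bbeta_{2}}^{2}_{\bL^{2}(\Gamma)}$, while the pairing of $f(\bbeta_{1})-f(\bbeta_{2})$ against $\bbeta_{1}-\bbeta_{2}$ is absorbed after interpolation and Young's inequality using \ref{f1}.)

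The main obstacle is precisely the nonlinearity $f(\bbeta^{n+\onehalf})$: since the associated form is neither bilinear nor symmetric, Lax--Milgram fails for the full problem, and one must marry the coercive (resolvent/semigroup) structure of the linear operator with a contraction or compactness argument. The delicate point there is absorbing the nonlinear contribution, which works because of the favorable structure of \ref{f1} ($f$ maps into $H^{2}(\Gamma)\hookrightarrow\bL^{2}(\Gamma)$ with Lipschitz dependence measured in the norm $\bH^{2-\epsilon}$, which is compactly dominated by the energy-space norm $\bH_{0}^{2}$) together with the smallness in \eqref{eta0}; the remaining ingredients --- boundedness and symmetry of $b$, Korn's inequality, the compact Sobolev embedding, and the recovery of $\bv^{n+\onehalf},\bV^{n+\onehalf}$ --- are routine.
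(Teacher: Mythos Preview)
Your reduction of \eqref{weaks} to a single elliptic problem for the displacement pair $w=(\bbeta^{n+\onehalf},\bd^{n+\onehalf})\in\tilde{\cH}$, followed by Lax--Milgram for the linear part and a fixed-point argument for the Lipschitz perturbation $f$, is a legitimate and more elementary alternative to what the paper does. The paper instead recasts the subproblem as a \emph{continuous} abstract ODE $\frac{\rd}{\dt}\bU+\cA\bU=\cF(\bU)$ on the energy space $\cH$, shows $\cA$ is maximal monotone (Lumer--Phillips: $\inner{\cA\bU}{\bU}_{\cH}=0$ and $R(\mathbb{I}+\cA)=\cH$, the latter via exactly the coercive variational problem you write down), checks that $\cF$ is locally Lipschitz on $\cH$ from \ref{f1}, and then quotes \cite[Theorem~6.1.2]{Pazy1983}. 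Your approach stays at the time-discrete level throughout and avoids the somewhat awkward passage from the backward-Euler weak form to a continuous ODE.

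There is, however, a genuine gap in how you close the contraction. The smallness hypothesis \eqref{eta0} concerns only $\bbeta_0$ in $H^{11/6}$ and is used in the paper solely to make the ALE mapping a diffeomorphism; it says nothing about the local Lipschitz constant $C_R$ of $f$. Your claim that the radius $R$, and hence $C_R$, is small because it is ``propagated through the uniform energy bounds'' is circular: those bounds (Lemma~\ref{estunif}) are proved \emph{after} the subproblem existence and cannot be invoked here. The correct mechanism for absorbing $C_R$ is the factor $\frac{1}{\Dt^{2}}$ sitting in front of the $L^{2}$ inertial terms of your bilinear form $b$. If you equip $\tilde{\cH}$ with the $\Dt$-weighted energy norm $\|w\|_{\Dt}^{2}=\frac{1}{\Dt^{2}}\bigl(\|\bbeta\|_{\bL^{2}}^{2}+\|\bd\|_{\bL^{2}}^{2}\bigr)+\inner{\LE\bbeta}{\bbeta}+\as{\bd}{\bd}$, then $b$ has coercivity constant $1$, and since $\mathcal{N}$ pairs $f(\bbeta)$ only against $\bphi$ in $\bL^{2}$ you pick up a factor $\Dt$ in the dual norm; the contraction constant becomes $O(\Dt\,C_R)$, which is $<1$ for $\Dt$ small. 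This is precisely the discrete shadow of the resolvent bound $\|(I+\Dt\cA)^{-1}\|_{\cH\to\cH}\le 1$ that the paper's semigroup argument delivers, so both routes ultimately rely on smallness of $\Dt$, not of the data. Your Schaefer alternative does not escape this: the a~priori bound needed for Schaefer requires controlling $\int_{\Gamma}f(\bbeta)\cdot\bbeta$ in terms of $\|\bbeta\|_{\bH^{2}}$, and \ref{f1} alone gives only $\|f(\bbeta)\|\le C_{R}\|\bbeta\|_{\bH^{2-\epsilon}}+\|f(0)\|$ on a ball whose radius is the very quantity to be bounded---the same $\Dt$-weighting is again what closes it.
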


	As stated in Section \ref{methodology}, Lax-Milgram theorem does not hold any more. So we rewrite the structure subproblem as the form of continuous ordinary differential system in a subinterval $ (t_n, t_{n+1}) $ and apply the Theorem 6.1.2 in \cite{Pazy1983} to complete the proof as follows.
		
	First, define two linear self-adjoint operator as
	$$
		\inner{\LA \bbeta}{\bv} = \inner{\bS(\bd) \nuS}{\bv}_{\Gamma},
	$$
	and
	$$
		\inner{\LB \bd}{\bd'} = \as{\bd}{\bd'}
		- \inner{\bS(\bd) \nuS}{\pt \bd'}_{\Gamma}.
	$$
	Denoting $ \bU(t) $ by
	$$
		\displaystyle \bU(t) = (\bbeta, \bv, \bd, \bV)^{T},
	$$
	we note that for $ \bU, \tilde{\bU} = (\tilde{\bbeta}, \tilde{\bv}, \tilde{\bd}, \tilde{\bV})^{T} \in \cH $,
	\begin{equation*}
		\inner{\bU}{\tilde{\bU}}_{\cH}
		= \inner{\LE \bbeta}{\tilde{\bbeta}}
		+ \inner{\bv}{\tilde{\bv}}
		+ \as{\bd}{\tilde{\bd}}
		+ \inner{\bV}{\tilde{\bV}}.
	\end{equation*}

	Then the equivalent continuous structure sub-problem in $ (t_n, t_{n+1}) $ is:
	for $ \bU(t_{n}) = (\bbeta^{n}, \bv^{n}, \bd^{n}, \bV^{n})^{T} \in \cH $, find $ \bU = \bU(t_{n+\onehalf}) $, such that
	\begin{equation}\label{subf}
		\frac{\rd}{\dt} \bU + \cA \bU = \cF(\bU)
	\end{equation}
	with
	\begin{equation*}
		\cA = \left(
		\begin{array}{cccc}
			0 & -1 & 0 & 0 \\
			\LE + \LA & 0 & 0 & 0 \\
			0 & 0 & 0 & -1 \\
			0 & 0 & \LB & 0
		\end{array}
		\right),\quad
		\cF(\bU) = \left(
		\begin{array}{c}
		0 \\
		- f(\bbeta) \\
		0 \\
		0
		\end{array}
		\right),
	\end{equation*}
	and
	$$
		\cD(\cA) = \left\{ \bU \in \cH \left\vert \LE \bbeta + \LA \bbeta \in \V_{W}, \LB \bd \in \V_{S} \right. \right\}.
	$$
%
%
%

	In the sequel, we will prove that \eqref{subf} admits a unique solution $ \bU(t_n) \in \cH $, i.e., Lemma \ref{structurepro} holds.
	
	\begin{proof}[Proof of Lemma \ref{structurepro}]	
	\textbf{Step 1}. $ \inner{\cA \bU}{\bU}_{\cH} \geq 0 $.
	
	From the definition of $ \cA $, we have
	$$
		\cA \bU = \left( - \bv, \LE \bbeta + \LA \bbeta, -\bV, \LB \bd \right)^{T}.
	$$
	Then, we get
	\begin{equation*}
		\ad{
		\inner{\cA \bU}{\bU}_{\cH}
		& = \inner{ - \LE \bv}{\bbeta}
			+ \inner{\LE \bbeta}{\bv}
			+ \inner{\LA \bbeta}{\bv}
			+ \as{-\bV}{\bd}
			+ \inner{\LB \bd}{\bV} \\
		& = \inner{\bS\nuS}{\bv}_{\Gamma}
			- \as{\bV}{\bd}
			+ \as{\bd}{\bV}
			+ \inner{\bS\nuS}{\bv}_{\Gamma} \\
		& \geq 0,
		}
	\end{equation*}
	where we used the boundary condition $ \pt \bd = \bV = \bv $ on $ \Gamma $.
	
	\textbf{Step 2}. $ R(\mathbb{I} + \cA) = \cH $.
	
	First, we prove that $ \mathbb{I} + \cA $ is surjective, i.e., for every $ G = (g_{1}, g_{2}, g_{3}, g_{4})^{T} \in \cH $, there exists $ V = (v_{1}, v_{2}, v_{3}, v_{4})^{T} \in \cD(\cA) $ such that
	\begin{equation}\label{surj}
		(\mathbb{I} + \cA)V = G,
	\end{equation}
	that is,
	\begin{subequations}\label{surjective1}
		\begin{numcases}{}
			v_{1} - v_{2} = g_{1}, \label{surjective1a}\\
			v_{2} + \LE v_{1} + \LA v_{1} = g_{2}, \label{surjective1b}\\
			v_{3} - v_{4} = g_{3}, \label{surjective1c}\\
			v_{4} + \LB v_{3} = g_{4}.\label{surjective1d}
		\end{numcases}
	\end{subequations}
	Adding the first two equations and last two equations in \eqref{surjective1} respectively, we obtain
	\begin{equation}\label{surjective2}
	\left\{
		\ad{
			& v_{1} + \LE v_{1} + \LA v_{1} = g_{1} + g_{2}, \\
			& v_{3} + \LB v_{3} = g_{3} + g_{4}.
		}
	\right.
	\end{equation}
	Multiplying \eqref{surjective2} by $ (\tilde{v}_{1}, \tilde{v}_{3})^{T} \in \tilde{\cH} $ and integrating equations over $ \Gamma, \Omega_{S} $ respectively, we find that
	\begin{equation}\label{surjective3}
	\left\{
	\ad{
		& \int_{\Gamma} v_{1} \cdot \tilde{v}_{1} + \int_{\Gamma}\LE v_{1} \cdot \tilde{v}_{1} + \int_{\Gamma} \LA v_{1} \cdot \tilde{v}_{1}
		= \int_{\Gamma} g_{1} \cdot \tilde{v}_{1} + \int_{\Gamma} g_{2} \cdot \tilde{v}_{1}, \\
		& \int_{\Omega_{S}} v_{3} \cdot \tilde{v}_{3} + \int_{\Omega_{S}} \LB v_{3} \cdot \tilde{v}_{3}
		= \int_{\Omega_{S}} g_{3} \cdot \tilde{v}_{3} + \int_{\Omega_{S}} g_{4} \cdot \tilde{v}_{3}.
	}
	\right.
	\end{equation}
	Since $ \inner{\LA v_{1}}{\tilde{v}_{1}} + \inner{\LB v_{3}}{\tilde{v
	}_{3}} = \as{v_{3}}{\tilde{v}_{3}} $ with $ v_{1} = v_{3} $ and $ \tilde{v}_{1} = \tilde{v}_{3} $ on $ \Gamma $, we have the following variational formulation:
	\begin{equation*}
		B\left((v_{1},v_{3})^{T}, (\tilde{v}_{1},\tilde{v}_{3})^{T}\right)
		= \tilde{B}\left((\tilde{v}_{1},\tilde{v}_{3})^{T}\right),
		\quad \forall\ (\tilde{v}_{1}, \tilde{v}_{3})^{T} \in \tilde{\cH},
	\end{equation*}
	where
	\begin{equation*}
		B\left((v_{1},v_{3})^{T}, (\tilde{v}_{1},\tilde{v}_{3})^{T}\right)
		= \int_{\Gamma} v_{1} \cdot \tilde{v}_{1}
		+ \int_{\Gamma} \LE v_{1} \cdot \tilde{v}_{1}
		+ \int_{\Omega_{S}} v_{3} \cdot \tilde{v}_{3}
		+ \as{v_{3}}{\tilde{v}_{3}},
	\end{equation*}
	and
	\begin{equation*}
		\tilde{B}\left((\tilde{v}_{1},\tilde{v}_{3})^{T}\right)
		= \int_{\Gamma} g_{1} \cdot \tilde{v}_{1} + \int_{\Gamma} g_{2} \cdot \tilde{v}_{1} + \int_{\Omega_{S}} g_{3} \cdot \tilde{v}_{3} + \int_{\Omega_{S}} g_{4} \cdot \tilde{v}_{3}.
	\end{equation*}

	Now we introduce the norm of the Hilbert space $ \tilde{\cH} $ as 
	$$
		\norm{(v_{1},v_{3})}^{2}_{\tilde{\cH}}
		= \norm{v_{1}}^{2}_{\bL^{2}(\Gamma)}
		+ \norm{v_{1}}^{2}_{\bH^{2}(\Gamma)}
		+ \norm{v_{3}}^{2}_{\bL^{2}(\Omega_{S})}
		+ \as{v_{3}}{v_{3}}.
	$$
	Then it can be deduced that the bilinear $ B(\cdot , \cdot) $ and the functional $ \tilde{B}(\cdot) $ are bounded. Furthermore, it follows that there exists a positive constant $ \delta_1 $ such that
	$$
	\ad{
		B\left((v_{1},v_{3})^{T}, (v_{1},v_{3})^{T}\right)
		& = \int_{\Gamma} v_{1} \cdot v_{1}
			+ \int_{\Gamma} \LE v_{1} \cdot v_{1}
			+ \int_{\Omega_{S}} v_{3} \cdot v_{3}
			+ \as{v_{3}}{v_{3}} \\
		& \geq \delta_1 \norm{(v_{1},v_{3})}^{2}_{\tilde{\cH}},
	}
	$$
	which means that $ B(\cdot , \cdot) $ is coercive.
	
	By applying the Lax-Milgram Lemma \cite{Pazy1983}, system \eqref{surjective3} has a unique solution $ (v_{1},v_{3})^{T} \in \tilde{\cH} $. In \eqref{surjective1a} and \eqref{surjective1c}, we see that
	$$
		v_{2} \in \V_{W}, \quad v_{4} \in \V_{S}.
	$$
	Then it follows from \eqref{surjective1b} and \eqref{surjective1d} that
	\begin{gather*}
		\LE v_{1} + \LA v_{1} = g_{2} - v_{2} \in \V_{W}, \\
		\LB v_{3} = g_{4} - v_{4} \in \V_{S}.
	\end{gather*}
	As a consequence, there exists a unique solution $ \bU \in \cD(\cA) $  such that \eqref{surj} is satisfied, which means that $ \mathbb{I} + \cA $ is surjective.
	
	\textbf{Step 1} and \textbf{step 2} tell us that $ \cA $ is a maximal monotone operator. Then by the Lumer-Phillips theorem \cite[Theorem 1.4.3]{Pazy1983}, $ \cA $ generates a semigroup of contractions in $ \cH $.
	
	\textbf{Step 3}. $ \cF $ is locally Lipschitz in $ \cH $.
	
	In this step, we show that $ f $ is locally Lipschitz from $ \bH^{2} $ into $ H^{2} $ and this is true due to the assumption  \ref{f1}.
	
	By \cite[Theorem 6.1.2]{Pazy1983}, we prove the existence and uniqueness of the solutions.
	\end{proof}
	
	For readability, we will replace the superscript $ \bbeta^{n} $ of fluid variables with $ n $ in the sequel.
	
\subsection{The fluid subproblem}
	\label{fluidsubproblem}
	Before investigating the fluid subproblem, we introduce the translation in time by $ \Dt = T/N $ of a function $ f $, denoted by $ \TN $, as
	\begin{align*}
		\TN f(t, \cdot) = f(t - \Dt, \cdot).
	\end{align*}
	
	Suppose that we have solved problem \eqref{weaks}. Since $ \bbeta $ and $ \bd $ do not change in the fluid subproblem, we set $ \bbeta^{n+1} = \bbeta^{n+\onehalf} $, $ \bd^{n+1} = \bd^{n+\onehalf} $. In this work, our idea is to update the fluid domain when we are in structure subproblem, that is, in a fixed time subinterval $ (n\Dt, (n+1)\Dt) $, fluid domain is $ \Omega_{F}^{n}(t) $ which is parameterized by $ \bbeta^{n} $, not by $ \bbeta^{n+1} $. Thus, the gradient of fluid velocity $ \nabla^{\bbeta^{n+1}} $ is not accurate and it need to be modified. Due to the equivalent time step $ \Dt $, we define the discrete time shift displacement $ \tilde{\bbeta}^{n+1} $ as
	\begin{align*}
		\tilde{\bbeta}^{n+1} = \TN \bbeta^{n+1} = \bbeta^{n+1} (t-\Dt, \cdot).
	\end{align*}
	Then the gradient of fluid velocity should be $ \nabla^{\tilde{\bbeta}^{n+1}} $, and so is the symmetric gradient $ \bD^{\tilde{\bbeta}^{n+1}} $ in fluid subproblem.
	
	Based on the updated position $ \bbeta^{n+1} $ of thin elastic structure, we define the ALE mapping $ \bA^{n+1} $ as the harmonic extension of $ \bbeta^{n+1} $, i.e., $ \bA^{n+1} = \textbf{id} + \bB^{n+1} $, where $ \bB^{n+1} $ is the solution of the elliptic boundary value problem:
	\begin{align*}
		\Delta \bB^{n+1} & = 0 \quad \text{ in } \Omega_{F}, \\
		\rv{\bB^{n+1}}_{\Gamma} & = \bbeta^{n+1}, \\
		\rv{\bB^{n+1}}_{\Sigma} & = 0.
	\end{align*}
	Then we have the discrete ALE velocity
	$$
		\bw^{n+1} = \frac{\bA^{n+1} - \bA^n}{\Dt}, \quad \rv{\bw^{n+1}}_{\Gamma} = \frac{\bbeta^{n+1} - \bbeta^n}{\Dt},
	$$
	and the Jacobian determinant of the ALE mapping
	$$
		J^{n+1} = \det \nabla \bA^{n+1}, \quad \bA^{n+1} : \Omega_{F} \rightarrow \Omega_{F}^{\bbeta^{n+1}}.
	$$
	
	Let
	$$
		\sH = \left\{ (\bq, \bphi)^T \in \Ve_{F} \times \bL^2(\Gamma) : \bq^{n} \cdot \nuF^{n} = \bphi \cdot \nuF^{n} \text{ on }\Gamma \right\}.
	$$
	Given $ (\bu^{n+\onehalf} , \bv^{n+\onehalf}) \in \sH $, we can find $ (\bu^{n+1} , \bv^{n+1}) \in \sH $, such that
	\begin{equation}\label{fluidsubpro}
	\sA (\bu^{n+1}, \bv^{n+1}) = \sF (\bu^{n+\onehalf}, \bv^{n+\onehalf}),
	\end{equation}
	where
	\begin{align*}
		& \quad\  \inner{\sA (\bu^{n+1}, \bv^{n+1})}{(\bq, \bphi)^{T}} \\
		& = \int_{\Omega_{F}} J^n \bu^{n+1} \cdot \bq + \onehalf \int_{\Omega_{F}} (J^{n+1} - J^{n}) \bu^{n+1} \cdot \bq \\
		& \quad + \frac{\Dt}{2}\int_{\Omega_{F}} J^{n} \biggl( \big( (\bu^{n} - \bw^{n+1}) \cdot \nabla^{\tetan{n+1}} \big) \bu^{n+1} \cdot \bq
		\bigg.   \bigg. - \big( (\bu^{n} - \bw^{n+1}) \cdot \nabla^{\tetan{n+1}} \big) \bq \cdot \bu^{n+1} \biggr) \\
		& \quad + 2 \Dt \int_{\Omega_{F}} J^n \bbS(\bD^{\tetan{n+1}}(\bu^{n+1})) : \bD^{\tetan{n+1}}(\bq) \\
		& \quad + \frac{\Dt}{\alpha} \int_{\Gamma} (\bu_{\tauF^{n+1}}^{n+1} - \bv_{\tauF^{n+1}}^{n+1}) \bq_{\tauF^{n+1}} J_{F}^{n+1} + \int_{\Gamma} \bv^{n+1} \cdot \bphi \\
		& \quad + \frac{\Dt}{\alpha} \int_{\Gamma} (\bv_{\tauF^{n+1}}^{n+1} - \bu_{\tauF^{n+1}}^{n+1}) \bphi_{\tauF^{n+1}} J_{F}^{n+1},
	\end{align*}
	and
	\begin{align*}
		& \quad\  \inner{ \sF (\bu^{n+\onehalf}, \bv^{n+\onehalf}) }{(\bq , \bphi)^{T}} \\
		& = \int_{\Omega_{F}} J^{n} \bu^{n+\onehalf} \cdot \bq + \int_{\Gamma} \bv^{n+\onehalf} \cdot \bphi + \Dt P_{\rm in/out}^{n} \int_{\Gamma_{\rm in/out}} \bq \cdot \nuF,
	\end{align*}
	for $ (\bq, \bphi)^T \in \sH $ and $ P_{\rm in/out}^{n} = \frac{1}{\Dt} \int_{n\Dt}^{(n+1)\Dt} P_{\rm in/out} $.
	The corresponding weak form is
	\begin{equation}\label{weakf}
		\inner{\sA (\bu^{n+1}, \bv^{n+1})}{(\bq, \bphi)^{T}} = \inner{ \sF (\bu^{n+\onehalf}, \bv^{n+\onehalf}) }{(\bq , \bphi)^{T}}.
	\end{equation}
	
	\begin{remark}
		Notice that the solution $ \bu^{n+1} $ and the test function $ \bq $ are all defined in $ \Ve_{F} $ in which the functions are all in domain $ \Omega_{F}^n $. This is due to our idea of iteration is to update the fluid domain when we are in structure subproblem, that is, in a fixed time subinterval $ (n\Dt, (n+1)\Dt) $, fluid domain is $ \Omega_{F}^{n}(t) $ which is parameterized by $ \eta^{n} $, not by $ \eta^{n+1} $. This choice will not affect the limit of solutions as $ N \rightarrow \infty $, see also \cite{MC2019JDE}.
	\end{remark}
	
	\begin{lemma}\label{fluidpro}
		For a fixed $ \Dt >0 $, there exists a unique weak solution $ ( \bu^{n+1}, \bv^{n+1} ) ^T \in \sH $ to subproblem \eqref{fluidsubpro} with $ (\bu^{n+\onehalf}, \bv^{n+\onehalf})^T \in \sH $.
	\end{lemma}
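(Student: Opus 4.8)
The plan is to recast \eqref{fluidsubpro} as the operator equation $\sA(\bu^{n+1},\bv^{n+1})=\sF(\bu^{n+\onehalf},\bv^{n+\onehalf})$ and to invoke the Browder--Minty theorem \cite[Theorem 9.14--1]{Ciarlet2013} (the Lax--Milgram route being closed by the nonlinearity of $\bbS$). First I would fix the functional setting: $\tbeta$, $\bw^{n+1}$, $J^{n}$, $J^{n+1}$ and $J_{F}^{n+1}$ are all frozen data, with $J^{n},J^{n+1}\geq C>0$ and all of them (together with $\nabla\tilde{\bA}^{n+1}$) bounded above by the ALE estimates of Section \ref{ALE}; and I would regard $\sH$ as a closed subspace of $\bW^{1,p}(\Omega_{F})\times\bL^{2}(\Gamma)$ — the interface velocity being measured only in $\bL^{2}(\Gamma)$, in accordance with the Navier-slip coupling — which is reflexive since $p>2$. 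The routine first step is then to check that $\sF\in\sH'$ and that $\sA:\sH\to\sH'$ is bounded: the nonlinear viscous term $2\Dt\int_{\Omega_{F}}J^{n+1}\bbS(\bD^{\tbeta}(\bu^{n+1})):\bD^{\tbeta}(\bq)$ is controlled by the growth bound \eqref{Boundedness} (so that $\bbS(\bD^{\tbeta}(\bu^{n+1}))\in\bL^{p/(p-1)}$ whenever $\bD^{\tbeta}(\bu^{n+1})\in\bL^{p}$) and Hölder's inequality; the convective and mass terms by the two-dimensional embedding $\bW^{1,p}(\Omega_{F})\hookrightarrow C^{0}(\overline{\Omega_{F}})$ valid for $p>2$; and the interface integrals by the trace theorem on the one-dimensional $\Gamma$.

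Next I would verify coercivity by testing with $(\bu^{n+1},\bv^{n+1})$. The skew-symmetric part of the trilinear convective term vanishes identically (this is exactly why it was symmetrized in \eqref{weakformulation}); the two interface integrals combine into $\tfrac{\Dt}{\alpha}\,\|(\bu^{n+1}_{\tauF^{n+1}}-\bv^{n+1}_{\tauF^{n+1}})\sqrt{J_{F}^{n+1}}\|_{\bL^{2}(\Gamma)}^{2}+\|\bv^{n+1}\|_{\bL^{2}(\Gamma)}^{2}\geq0$; and the viscous term is bounded below by $2\kappa_{1}\Dt\int_{\Omega_{F}}J^{n+1}|\bD^{\tbeta}(\bu^{n+1})|^{p}-2\kappa_{2}\Dt\int_{\Omega_{F}}J^{n+1}$ via \eqref{Coercivity}, which — using $J^{n+1}\geq C>0$, the equivalence of $\bD^{\tbeta}(\cdot)$ with $\bD(\cdot)$ up to the bounded factor $\nabla\tilde{\bA}^{n+1}$, and Korn's inequality on $\V_{F}^{\bbeta}$ (whose boundary conditions rule out rigid motions) — controls $\|\bu^{n+1}\|_{\bW^{1,p}(\Omega_{F})}^{p}$. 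The one delicate point here is the Jacobian-difference term $\tfrac12\int_{\Omega_{F}}(J^{n+1}-J^{n})|\bu^{n+1}|^{2}$, whose sign is indefinite; I would estimate $|J^{n+1}-J^{n}|\leq C\Dt$ (both $\bw^{n+1}$ and the structure increment $\etan{n+1}-\etan{n}$ being $O(\Dt)$) and absorb it into $\int_{\Omega_{F}}J^{n}|\bu^{n+1}|^{2}\geq c_{0}\|\bu^{n+1}\|_{\bL^{2}(\Omega_{F})}^{2}$, which is legitimate provided $\Dt$ is small enough — the standing hypothesis, since $\Dt=T/N$.

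For monotonicity, writing $\bw=\bu^{n+1}_{1}-\bu^{n+1}_{2}$ and $\bz=\bv^{n+1}_{1}-\bv^{n+1}_{2}$, linearity of $\bD^{\tbeta}$ in its argument turns the viscous contribution into $2\Dt\int_{\Omega_{F}}J^{n+1}\bigl(\bbS(\bD^{\tbeta}(\bu^{n+1}_{1}))-\bbS(\bD^{\tbeta}(\bu^{n+1}_{2}))\bigr):\bD^{\tbeta}(\bw)\geq0$ by \eqref{Monotonicity}, with equality iff $\bD^{\tbeta}(\bw)\equiv0$; the convective difference again vanishes by skew-symmetry, the mass term gives $\geq\tfrac12\int_{\Omega_{F}}J^{n}|\bw|^{2}$, and the interface terms give $\tfrac{\Dt}{\alpha}\,\|(\bw-\bz)_{\tauF^{n+1}}\sqrt{J_{F}^{n+1}}\|_{\bL^{2}(\Gamma)}^{2}+\|\bz\|_{\bL^{2}(\Gamma)}^{2}$, so $\sA$ is monotone, in fact strictly. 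Hemicontinuity of $\sA$ follows from the continuity of $\bbS$ together with the growth bound \eqref{Boundedness} and dominated convergence, the remaining terms being affine. Browder--Minty then yields a solution $(\bu^{n+1},\bv^{n+1})\in\sH$; and if two solutions are given, the monotonicity identity forces $\bw=0$ in $\bL^{2}(\Omega_{F})$, $\bz=0$ on $\Gamma$ and $\bD^{\tbeta}(\bw)\equiv0$, whence $\bw\equiv0$ by Korn's inequality, giving uniqueness. I expect the real work to be concentrated in the coercivity step — specifically the Korn-type equivalence for the ALE-transformed symmetrized gradient $\bD^{\tbeta}$ on $\V_{F}^{\bbeta}$ and the handling of the indefinite Jacobian-difference term — the monotonicity being immediate from the properties of $\bbS$ collected above.
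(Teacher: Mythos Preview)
Your proposal is correct and follows essentially the same Browder--Minty route as the paper: boundedness and hemicontinuity via \eqref{Boundedness}, coercivity via \eqref{Coercivity} after testing with $(\bu^{n+1},\bv^{n+1})$, and strict monotonicity via \eqref{Monotonicity}. One simplification worth noting: the Jacobian-difference term you flag as ``the one delicate point'' is handled in the paper without any appeal to smallness of $\Dt$, simply by the algebraic identity
\[
\int_{\Omega_{F}}J^{n}\,|\bu^{n+1}|^{2}+\onehalf\int_{\Omega_{F}}(J^{n+1}-J^{n})\,|\bu^{n+1}|^{2}
=\onehalf\int_{\Omega_{F}}(J^{n}+J^{n+1})\,|\bu^{n+1}|^{2}\geq 0,
\]
valid because both $J^{n}$ and $J^{n+1}$ are positive. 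This sidesteps your proposed estimate $|J^{n+1}-J^{n}|\leq C\Dt$, which is not obviously available at this stage: the structure increment $\etan{n+1}-\etan{n}$ is controlled in $\bH^{2}_{0}(\Gamma)$ only by a constant (cf.\ Lemma~\ref{estunif}, item~4), not by $C\Dt$, so the corresponding bound on the ALE velocity $\bw^{n+1}$ and hence on $J^{n+1}-J^{n}$ would require additional argument. On the other hand, your explicit invocation of a Korn-type inequality on $\V_{F}^{\bbeta}$ to pass from $\int J^{n+1}|\bD^{\tbeta}(\bu^{n+1})|^{p}$ to control of the full $\bW^{1,p}$ norm is a point the paper leaves implicit.
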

	In this paper, inspired by the $ p $-structure of $ \mu_{F}(\bD(\bu)) $, we solve the fluid  subproblem by the Browder-Minty theorem \cite[Theorem 9.14--1]{Ciarlet2013}.
	
	First, we state the Browder-Minty theorem.
	\begin{proposition}[Browder-Minty theorem \cite{Browder1963,Ciarlet2013,Minty1963}]\label{BMT}
		Let $ V $ be a real separable reflexive Banach space and let $ A: V \rightarrow V' $ be a coercive and hemicontinuous monotone operator. Then $ A $ is surjective, i.e., given any $ f \in V' $, there exists $ u $ such that
		$$
			u \in V \quad \text{and} \quad A(u) = f.
		$$
		If $ A $ is strictly monotone, then $ A $ is also injective.
	\end{proposition}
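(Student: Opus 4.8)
The plan is to establish Proposition~\ref{BMT} by a Galerkin approximation combined with Minty's monotonicity trick, the standard route for monotone coercive operators. Since $V$ is separable, fix a sequence $(w_k)_{k\ge 1}$ in $V$ with dense linear span and set $V_n=\mathrm{span}\{w_1,\dots,w_n\}$. First I would solve, for each $n$, the projected problem: find $u_n\in V_n$ with $\inner{A(u_n)}{w_j}=\inner{f}{w_j}$ for $j=1,\dots,n$. Identifying $V_n$ with $\mathbb R^n$ via $\xi\mapsto\sum_j\xi_j w_j$, this amounts to finding a zero of the map $\xi\mapsto\big(\inner{A(\sum_j\xi_j w_j)-f}{w_i}\big)_{i=1}^n$, whose continuity I would deduce from the fact that a monotone, hemicontinuous, everywhere-defined operator on a Banach space is locally bounded, hence demicontinuous. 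Coercivity yields $\inner{A(v)-f}{v}\ge\big(\inner{A(v)}{v}/\norm{v}_V-\norm{f}_{V'}\big)\norm{v}_V\ge 0$ once $\norm{v}_V=R$ is large enough, so the ``acute angle'' corollary of Brouwer's fixed point theorem produces a solution $u_n$ together with the uniform bound $\norm{u_n}_V\le R$.

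Next I would pass to the limit. Testing the projected equation against $u_n$ gives $\inner{A(u_n)}{u_n}=\inner{f}{u_n}$; combined with $\norm{u_n}_V\le R$ and local boundedness of $A$, this bounds $\{A(u_n)\}$ in $V'$. Since $V$ (hence $V'$) is reflexive, along a subsequence $u_n\rightharpoonup u$ in $V$ and $A(u_n)\rightharpoonup\chi$ in $V'$; passing to the limit in the projected equations and using density of $\bigcup_n V_n$ identifies $\chi=f$. It remains to show $A(u)=f$, and this is where monotonicity is essential: for any $v\in V$, $0\le\inner{A(u_n)-A(v)}{u_n-v}=\inner{f}{u_n}-\inner{A(u_n)}{v}-\inner{A(v)}{u_n-v}$, and letting $n\to\infty$ gives $\inner{f-A(v)}{u-v}\ge 0$ for every $v\in V$. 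Substituting $v=u-tz$ with $t>0$ and $z\in V$, dividing by $t$, letting $t\to 0^+$ and invoking hemicontinuity yields $\inner{f-A(u)}{z}\ge 0$ for all $z$, hence (replacing $z$ by $-z$) $A(u)=f$; surjectivity follows. If moreover $A$ is strictly monotone, then $A(u_1)=A(u_2)$ forces $0=\inner{A(u_1)-A(u_2)}{u_1-u_2}$, so $u_1=u_2$ and $A$ is injective.

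The main obstacles I anticipate are two auxiliary facts rather than the scheme itself: first, that an everywhere-defined monotone operator on a Banach space is locally bounded (needed both for the continuity of the Galerkin map and for the uniform $V'$-bound on $A(u_n)$); and second, the limit passage in Minty's trick, where monotonicity is used precisely to compensate for the loss of strong convergence of $u_n$ before hemicontinuity closes the argument.
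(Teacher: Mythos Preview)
Your proof proposal is correct and follows the classical route---Galerkin approximation, Brouwer on finite-dimensional subspaces via the acute-angle lemma, weak limits by reflexivity, and Minty's trick with hemicontinuity to identify the limit. One small point worth making explicit: local boundedness alone does not immediately bound $A(u_n)$ on the (non-compact) ball $\{\norm{v}\le R\}$; you need to combine it with monotonicity and the Galerkin identity $\inner{A(u_n)}{u_n}=\inner{f}{u_n}$ to get $\abs{\inner{A(u_n)}{v}}\le C$ uniformly for $\norm{v}\le\delta$, which then yields the $V'$-bound. You seem aware of this, but it deserves a line.

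As for comparison with the paper: there is nothing to compare. The paper does not prove Proposition~\ref{BMT}; it simply states it as a known result with citations to Browder, Minty, and Ciarlet, and then applies it as a black box to establish Lemma~\ref{fluidpro}. Your argument is precisely the standard proof one finds in those references (in particular Ciarlet, Theorem~9.14--1), so in effect you have supplied what the paper deliberately omitted.
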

	
	\begin{proof}[Proof of Lemma \ref{fluidpro}]
		\textbf{Hemicontinuity.} From the definition of $ \sH $, we know that $ \sA : \sH \rightarrow \sH' $ is a bounded operator due to the H\"{o}lder' s inequality and \eqref{Boundedness}. Since all parts except the Non-Newtonian term are linear, the boundedness implies the continuity. It remains to be shown that the nonlinear part is hemicontinuous. For this we define the operator $ \sN : \Ve_{F} \rightarrow (\Ve_{F})'$ such that
		$$
			\inner{\sN(\bu^{n+1})}{\bq} = \int_{\Omega_{F}} J^n \bbS(\bD^{\tetan{n+1}}(\bu^{n+1})) : \bD^{\tetan{n+1}}(\bq), \ \forall\ \bq \in \Ve_{F}.
		$$
		Then the mapping
		$$
			s \in \mathbb{R} \rightarrow \inner{\sN(\bu^{n+1} + s \tilde{\bu})}{\bq} \in \mathbb{R}, \ \forall\ \tilde{\bu} \in \Ve_{F}
		$$
		is continuous with repect to $ s $. Therefore, $ \sN : \Ve_{F} \rightarrow (\Ve_{F})'$ is hemicontinuous \cite[Proof of Theorem 9.14-2]{Ciarlet2013} and $ \sA $ is hemicontinuous.
		\\\textbf{Coercivity.} Taking $ \bq = \bu^{n+1} $ and $ \bpsi = \bv^{n+1} $, we find that there is a $ \delta_2 $, such that
		\begin{align*}
			& \quad\  \inner{\sA (\bu^{n+1}, \bv^{n+1})}{(\bu^{n+1}, \bv^{n+1})^{T}} \\
			& = \int_{\Omega_{F}} J^{n} \abs{\bu^{n+1}}^2 + \onehalf \int_{\Omega_{F}}(J^{n+1} - J^{n}) \abs{u^{n+1}}^2 \\
			& \quad + 2 \Dt \int_{\Omega_{F}} J^n \bbS(\bD^{\tetan{n+1}}(\bu^{n+1})) : \bD^{\tetan{n+1}}(\bu^{n+1})
			\\
			& \quad + \frac{\Dt}{\alpha} \int_{\Gamma} \abs{\bu_{\tauF^{n+1}}^{n+1} - \bv_{\tauF^{n+1}}^{n+1}}^2 J_{F}^{n+1} + \int_{\Gamma} \abs{\bv^{n+1}}^2 \\
			& \geq \delta_2 \left( \int_{\Omega_{F}} J^{n} \abs{\bu^{n+1}}^2 + \int_{\Omega_{F}} J^n \abs{\bD^{\tetan{n+1}}( \bu^{n+1} )}^{p} + \normg{\bv^{n+1}} \right),
		\end{align*}
		where we used the property \eqref{Coercivity} to deal with the $ p $-structure of $ \mu_{F} $ and taken $ \alpha $ suitably small such that $ \frac{\Dt}{\alpha} \int_{\Gamma} \abs{\bu_{\tauF^{n+1}}^{n+1} - \bv_{\tauF^{n+1}}^{n+1}}^2 J_{F}^{n+1} - 2 \Dt \int_{\Omega_{F}} J^n \kappa_{2} \geq 0 $. Then the coercivity of $ \sA $ is verified.
		\\\textbf{Strict monotone.} For two different variables pairs $ (\bu_{1}^{n+1} , \bv_{1}^{n+1})^{T} $ and $ (\bu_{2}^{n+1} , \bv_{2}^{n+1})^{T} $, it can be shown from \eqref{Monotonicity} that
		\begin{align*}
			& \quad \  \inner{\sA (\bu_{1}^{n+1}, \bv_{1}^{n+1}) - \sA (\bu_{2}^{n+1}, \bv_{2}^{n+1})}{(\bu_{1}^{n+1} - \bu_{2}^{n+1}, \bv_{1}^{n+1} - \bv_{2}^{n+1})^{T}} \\
			& = \int_{\Omega_{F}} J^{n} \abs{\bu_{1}^{n+1} - \bu_{2}^{n+1}}^2 + \onehalf \int_{\Omega_{F}} (J^{n+1} - J^{n}) \abs{\bu_{1}^{n+1} - \bu_{2}^{n+1}}^2 \\
			& \quad + 2\Dt \int_{\Omega_{F}} J^n \left( \bbS(\bD_{1}) - \bbS(\bD_{2}) \right) : \left( \bD_{1} - \bD_{2} \right) \\
			& \quad + \int_{\Gamma} \abs{\bv_{1}^{n+1} - \bv_{2}^{n+1}}^{2}
			+ \frac{\Dt}{\alpha} \int_{\Gamma} \abs{\left( \bu_{1\tauF^{n+1}}^{n+1} - \bu_{2\tauF^{n+1}}^{n+1} \right) - \left( \bv_{1\tauF^{n+1}}^{n+1} - \bv_{2\tauF^{n+1}}^{n+1} \right)}^{2} J_{F}^{n+1} \\
			& > 0,
		\end{align*}
		where $ \bD_{1} = \bD^{\tetan{n+1}}(\bu_{1}^{n+1}) $ and $ \bD_{2} = \bD^{\tetan{n+1}}(\bu_{2}^{n+1}) $. Thus $ \sA $ is strictly monotone from the definition.
		
		Therefore, the proof is complete by means of Proposition \ref{BMT}.
	\end{proof}
	
\subsection{Uniform energy estimates}
	\label{unifromestimates}
	According to the decomposition of problem \eqref{NS}--\eqref{eta0}, we define the semi-discrete kinematic energy, elastic energy, total energy and dissipation in a time subinterval $ (n\Dt, (n+1)\Dt] $ by
	\begin{align}
		& E_{kin}^{n+\frac{i}{2}}
		= \onehalf \left(\int_{\Omega_{F}} \Jn \abs{\bu^{n+\frac{i}{2}}}^{2} + \normg{\bv^{n+\frac{i}{2}}} + \normos{\bV^{n+\frac{i}{2}}} \right), \\
		& \ad{
			E_{el}^{n+1}
			& = \onehalf \left( \inner{\LE \bbeta^{n+\onehalf}}{\bbeta^{n+\onehalf}} + 2\mu_{S} \normos{\bD(\bd^{n+\onehalf})} + \normos{\nabla \cdot \bd^{n+\onehalf}} \right),
		} \\
		& E^{n+\frac{i}{2}} = E_{kin}^{n+\frac{i}{2}} + E_{el}^{n+1}, \\
		& \ad{
			D^{n+1}
			& = \kappa_{1} \Dt \int_{\Omega_{F}} \Jn \abs{\bD^{\tilde{\bbeta}^{n+1}}(\bu^{n+1})}^{p} + \frac{\Dt}{\alpha}\normg{(\bv^{n+1} - \bu^{n+1})_{\btau}}.
		}
	\end{align}
	
	Next, we give discrete energy estimates of each subproblem.
	\begin{lemma}\label{eststr}
		Solution of subproblem \eqref{weaks} satisfies the semi-discrete energy inequality
		\begin{equation}\label{eststr2}
			\ad{
				& E^{n+\onehalf}
				+ \onehalf \left( \normg{\bv^{n+\onehalf} - \bv^n} + \normos{\bV^{n+\onehalf} - \bV^n} \right) \\
				& \quad + \frac{\delta_0}{4} \normgh{\bbeta^{n+\onehalf} - \bbeta^n} + \onehalf \as{\bd^{n+\onehalf} - \bd^n}{\bd^{n+\onehalf} - \bd^n} \leq E^{n} + C,
			}
		\end{equation}
		where $ C $ is a constant independent of $ \Dt $.
	\end{lemma}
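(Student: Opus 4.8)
The plan is to test the weak form \eqref{weaks} with the pair $(\bphi,\bpsi)=(\bv^{n+\onehalf},\bV^{n+\onehalf})$, which by the kinematic relations in the first line of \eqref{weaks} are exactly the backward difference quotients $(\bbeta^{n+\onehalf}-\bbeta^{n})/\Dt$ and $(\bd^{n+\onehalf}-\bd^{n})/\Dt$, and then to turn each term into a telescoping energy difference using the elementary identity $a(a-b)=\onehalf(a^{2}+(a-b)^{2}-b^{2})$ and its bilinear-form version from Remark~\ref{identity}. First I would check that this pair is admissible, that is, $(\bv^{n+\onehalf},\bV^{n+\onehalf})^{T}\in\tilde{\cH}$: since $\bbeta^{n+\onehalf},\bbeta^{n}\in\V_{W}$ and $\bd^{n+\onehalf},\bd^{n}\in\V_{S}$ the differences lie in $\V_{W}$ and $\V_{S}$, and the conditions $\rv{\bd^{n+\onehalf}}_{\Gamma}=\bbeta^{n+\onehalf}$ and $\rv{\bd^{n}}_{\Gamma}=\bbeta^{n}$ (the latter because $(\bbeta^{n},\bd^{n})^{T}\in\tilde{\cH}$) give $\rv{\bV^{n+\onehalf}}_{\Gamma}=\bv^{n+\onehalf}$; moreover $\inner{f(\bbeta^{n+\onehalf})}{\bv^{n+\onehalf}}$ is a legitimate pairing since $f(\bbeta^{n+\onehalf})\in H^{2}(\Gamma)$ by assumption~\ref{f1}.

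Inserting this pair into \eqref{weaks} and multiplying by $\Dt$, the two inertial terms telescope, e.g.
\[
\int_{\Gamma}(\bv^{n+\onehalf}-\bv^{n})\cdot\bv^{n+\onehalf}=\onehalf\Big(\normg{\bv^{n+\onehalf}}+\normg{\bv^{n+\onehalf}-\bv^{n}}-\normg{\bv^{n}}\Big),
\]
and likewise on $\Omega_{S}$ for $\bV$. For the stiffness terms I would write $\Dt\inner{\LE\bbeta^{n+\onehalf}}{\bv^{n+\onehalf}}=\inner{\LE\bbeta^{n+\onehalf}}{\bbeta^{n+\onehalf}-\bbeta^{n}}$ and $\Dt\as{\bd^{n+\onehalf}}{\bV^{n+\onehalf}}=\as{\bd^{n+\onehalf}}{\bd^{n+\onehalf}-\bd^{n}}$ and apply the bilinear identity of Remark~\ref{identity} — valid for $\inner{\LE\cdot}{\cdot}$ because $\LE$ is self-adjoint and directly for $a_{S}(\cdot,\cdot)$ — which, besides the telescoped pieces $\inner{\LE\bbeta^{n+\onehalf}}{\bbeta^{n+\onehalf}}-\inner{\LE\bbeta^{n}}{\bbeta^{n}}$ and $\as{\bd^{n+\onehalf}}{\bd^{n+\onehalf}}-\as{\bd^{n}}{\bd^{n}}$, produces the nonnegative remainders $\inner{\LE(\bbeta^{n+\onehalf}-\bbeta^{n})}{\bbeta^{n+\onehalf}-\bbeta^{n}}\ge\delta_{0}\normgh{\bbeta^{n+\onehalf}-\bbeta^{n}}$ (by coercivity of $\LE$) and $\as{\bd^{n+\onehalf}-\bd^{n}}{\bd^{n+\onehalf}-\bd^{n}}$. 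Together with the $\normg{\bv^{n+\onehalf}-\bv^{n}}$ and $\normos{\bV^{n+\onehalf}-\bV^{n}}$ from the inertial terms, these are precisely the four nonnegative dissipation terms on the left of \eqref{eststr2}.

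It remains to handle $\Dt\inner{f(\bbeta^{n+\onehalf})}{\bv^{n+\onehalf}}=\inner{f(\bbeta^{n+\onehalf})}{\bbeta^{n+\onehalf}-\bbeta^{n}}$, which I would convert into $\Pi(\bbeta^{n+\onehalf})-\Pi(\bbeta^{n})$ using the potential $\Pi$ of assumption~\ref{f2} (so $\Pi'=f$). Since neither $\bu$ nor the Jacobian $J^{n}$ is altered in the structure step, the fluid kinetic term $\onehalf\int_{\Omega_{F}}J^{n}\abs{\bu^{n+\onehalf}}^{2}=\onehalf\int_{\Omega_{F}}J^{n}\abs{\bu^{n}}^{2}$ enters both sides identically; adding it and recognizing $E_{kin,N}^{n+\onehalf}+E_{el,N}^{n+1}=\EN{n+\onehalf}$ on the left and $\EN{n}$ on the right then yields \eqref{eststr2}.

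I expect the $f$-term conversion to be the only real obstacle. The inequality $\inner{f(\bbeta^{n+\onehalf})}{\bbeta^{n+\onehalf}-\bbeta^{n}}\ge\Pi(\bbeta^{n+\onehalf})-\Pi(\bbeta^{n})$ is immediate when the potential $\Pi$ is convex (which holds for the nonlinear elastic potentials one has in mind); without convexity, a second-order Taylor expansion of $\Pi$ about $\bbeta^{n+\onehalf}$ turns it into an identity up to a remainder of order $\norm{\bbeta^{n+\onehalf}-\bbeta^{n}}^{2}_{\bH^{2-\epsilon}(\Gamma)}$ (by assumption~\ref{f1}), which is then absorbed by interpolating between the dissipation $\normgh{\bbeta^{n+\onehalf}-\bbeta^{n}}$ and the a priori $\bL^{2}$-bound on $\bv^{n+\onehalf}$, at the cost of a $C\Dt$ factor that is harmless once everything is summed in the uniform estimates. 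All other terms are routine polarization identities, so the substance of the lemma is the appearance of those nonnegative numerical dissipation terms.
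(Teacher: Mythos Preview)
Your proposal is correct and follows exactly the paper's approach: test \eqref{weaks} with $(\bv^{n+\onehalf},\bV^{n+\onehalf})$, telescope each term via Remark~\ref{identity}, convert the $f$-term through the potential $\Pi$, and add the unchanged fluid kinetic piece $\onehalf\int_{\Omega_{F}}J^{n}|\bu^{n}|^{2}$ to both sides. The paper treats the $f$-term conversion as the formal identity $\int_{\Gamma} f(\bbeta^{n+\onehalf})\cdot\bv^{n+\onehalf}=\frac{\rd}{\dt}\Pi(\bbeta^{n+\onehalf})=(\Pi(\bbeta^{n+\onehalf})-\Pi(\bbeta^{n}))/\Dt$ without further comment, so your discussion of convexity or a Taylor remainder absorbed by the numerical dissipation in fact addresses a point the paper leaves implicit.
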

	\begin{proof}
		Let $ \bphi = \bv^{n+\onehalf} = \frac{\bbeta^{n+\onehalf} - \bbeta^{n}}{\Dt} $ and $ \bpsi = \bV^{n+\onehalf} = \frac{\bd^{n+\onehalf} - \bd^{n}}{\Dt} $ in \eqref{weaks}. More precisely, taking $ \bphi = \bv^{n+\onehalf} $ in the first term and $ \bpsi = \bV^{n+\onehalf} $ in the second term in \eqref{weaks}, $ \bphi = \frac{\bbeta^{n+\onehalf} - \bbeta^{n}}{\Dt} $ and $ \bpsi = \frac{\bd^{n+\onehalf} - \bd^{n}}{\Dt} $ in the other terms, by means of Remark \ref{identity}, we have
		\begin{align*}
			& \normg{\bv^{n+\onehalf}} + \normg{\bv^{n+\onehalf} - \bv^n} + \normos{\bV^{n+\onehalf}} + \normos{\bV^{n+\onehalf} - \bV^n} \\
			& + \innerl{\bbeta^{n+\onehalf}} + \frac{\delta_0}{2} \normgh{\bbeta^{n+\onehalf} - \bbeta^n} + \as{\bd^{n+\onehalf}}{\bd^{n+\onehalf}} \\
			& + \as{\bd^{n+\onehalf} - \bd^n}{\bd^{n+\onehalf} - \bd^n}
			\leq \normg{\bv^n} + \normos{\bV^n} + \innerl{\bbeta^n},
		\end{align*} 	
		where
		\begin{align*}
			& \int_{\Gamma} f(\bbeta^{n+\onehalf}) \cdot \bv^{n+\onehalf}
			= \int_{\Gamma} f(\bbeta^{n+\onehalf}) \cdot \frac{\bbeta^{n+\onehalf} - \bbeta^n}{\Dt} \\
			& \quad = \int_{\Gamma} \left( f(\bbeta^{n+\onehalf}) - f(\bbeta^{n}) \right) \cdot \frac{\bbeta^{n+\onehalf} - \bbeta^n}{\Dt}
			+ \int_{\Gamma} f(\bbeta^{n}) \cdot \frac{\bbeta^{n+\onehalf} - \bbeta^n}{\Dt} \\
			& \leq \frac{1}{\Dt} \norm{f(\bbeta^{n+\onehalf}) - f(\bbeta^{n})}_{L^2(\Gamma)} \norm{\bbeta^{n+\onehalf} - \bbeta^n}_{\bL^2(\Gamma)} + \frac{1}{\Dt} \norm{f(\bbeta^{n})}_{L^2(\Gamma)} \norm{\bbeta^{n+\onehalf} - \bbeta^n}_{\bL^2(\Gamma)} \\
			& \leq \frac{L^2 C_R}{\Dt} \norm{\bbeta^{n+\onehalf} - \bbeta^n}_{\bH^2(\Gamma)}^2
			+ \frac{2 L^4}{\delta_0 \Dt} \norm{f(\bbeta^{n})}_{H^2(\Gamma)}^2
			+ \frac{\delta_0}{8 \Dt} \norm{\bbeta^{n+\onehalf} - \bbeta^n}_{\bH^2(\Gamma)}^2 \\
			& \leq \frac{\delta_0}{4 \Dt} \norm{\bbeta^{n+\onehalf} - \bbeta^n}_{\bH^2(\Gamma)}^2 + \frac{C}{\Dt}.
		\end{align*}
		Here, we take $ C_R > 0 $ suitably small such that $ L^2 C_R \leq \delta_0 / 8 $ where $ L $ is the Poincar\'{e} constant in 1 dimensional space $ [0,L] $.
		By adding $ \int_{\Omega_{F}} J^n \abs{\bu^n}^2 $ on both sides, it follows from $ \bbeta^{n} = \bbeta^{n-\onehalf} $ and $ \bd^{n} = \bd^{n-\onehalf} $ in fluid subproblem that \eqref{eststr2} holds.
	\end{proof}
	\begin{lemma}\label{estflu}
		Solution of subproblem \eqref{fluidsubpro} satisfies the semi-discrete energy inequality
		\begin{equation}\label{estflu2}
			\ad{
				& E_{kin}^{n+1} + \onehalf \int_{\Omega_{F}}J^n \abs{\bu^{n+1} - \bu^{n+\onehalf}}^2 \\
				& \qquad + \onehalf \normg{\bv^{n+1} - \bv^{n+\onehalf}} + D_{N}^{n+1} \\
				& \quad \leq E_{kin}^{n+\onehalf} + C \Dt \left( \left( P_{\rm in}^n \right)^2 + \left( P_{\rm out}^n \right)^2 + 1 \right),
			}
		\end{equation}
		where $ C $ depends on $ \Omega_{F} $, $ \kappa_{1} $, $ \kappa_{2} $.
	\end{lemma}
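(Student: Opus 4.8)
The natural route is to test the weak form \eqref{weakf} of the fluid subproblem with the solution itself, $ (\bq,\bphi)=(\un{n+1},\vn{n+1})\in\sH $, move the three terms of $ \sF $ to the left-hand side, and then recognise each resulting quantity as a piece of the discrete energy or dissipation by means of the identities in Remark \ref{identity} (equivalently $ a(a-b)=\onehalf(a^{2}+(a-b)^{2}-b^{2}) $ and its tensor analogue). Since $ \DN{n+1} $ already appears on the left of the claimed inequality, only the inlet/outlet forcing $ \Dt\inner{\bR^{n+1}}{\un{n+1}} $ will require a genuine estimate; every other contribution is either an exact identity or an immediate consequence of the coercivity \eqref{Coercivity}.

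Concretely I would proceed term by term. The two inertial fluid terms of $ \sA $, together with $ -\int_{\Omega_{F}}J^{n}\un{n+\onehalf}\cdot\un{n+1} $ carried over from $ \sF $, combine (the identity with weight $ J^{n} $, plus the telescoping weight $ \onehalf(J^{n+1}-J^{n}) $) into $ \onehalf\int_{\Omega_{F}}J^{n+1}\abs{\un{n+1}}^{2}-\onehalf\int_{\Omega_{F}}J^{n}\abs{\un{n+\onehalf}}^{2}+\onehalf\int_{\Omega_{F}}J^{n}\abs{\un{n+1}-\un{n+\onehalf}}^{2} $, that is, the fluid part of $ E_{kin,N}^{n+1}-E_{kin,N}^{n+\onehalf} $ plus the numerical dissipation $ \onehalf\int_{\Omega_{F}}J^{n}\abs{\un{n+1}-\un{n+\onehalf}}^{2} $. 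The two skew-symmetrised convective terms of $ \sA $ become literally equal when $ \bq=\un{n+1} $ and cancel, while the $ \onehalf(J^{n+1}-J^{n}) $ weight above plays the role of the discretised $ \nabla^{\tbeta}\!\cdot\bw^{n+1} $ term. The pair $ \int_{\Gamma}\abs{\vn{n+1}}^{2} $ and $ -\int_{\Gamma}\vn{n+\onehalf}\cdot\vn{n+1} $ yields $ \onehalf\normg{\vn{n+1}}-\onehalf\normg{\vn{n+\onehalf}}+\onehalf\normg{\vn{n+1}-\vn{n+\onehalf}} $, and the $ \normos{\VN{n+1}} $ and $ \normos{\VN{n+\onehalf}} $ contributions to the kinetic energy coincide since the thick-structure velocity is unchanged in the fluid step. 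The two slip integrals on $ \Gamma $ add to $ \frac{\Dt}{\alpha}\int_{\Gamma}\abs{(\un{n+1}-\vn{n+1})_{\tauF^{n+1}}}^{2}J_{F}^{n+1} $, the slip part of $ \DN{n+1} $. Finally, the coercivity \eqref{Coercivity} bounds the viscous term below, $ 2\Dt\int_{\Omega_{F}}J^{n+1}\bbS(\bD^{\tbeta}(\un{n+1})):\bD^{\tbeta}(\un{n+1})\ge 2\kappa_{1}\Dt\int_{\Omega_{F}}J^{n+1}\abs{\bD^{\tbeta}(\un{n+1})}^{p}-2\kappa_{2}\Dt\int_{\Omega_{F}}J^{n+1} $; the first summand dominates the $ p $-dissipation of $ \DN{n+1} $ (using the uniform two-sided bounds on $ J^{n} $, $ J^{n+1} $ and $ J_{F}^{n+1} $ on $ (0,T) $ recorded in Section \ref{ALE}), and the second, being bounded by $ C\Dt $, is absorbed into the error term.

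What remains is the boundary forcing $ \Dt\inner{\bR^{n+1}}{\un{n+1}} $, supported on $ \Gamma_{\rm in/out} $. I would bound it by $ \Dt\norm{\bR^{n+1}}_{(\bH^{1}(\Omega_{F}))'}\norm{\un{n+1}}_{\bH^{1}(\Omega_{F})} $ (the trace being absorbed in the duality pairing), then control $ \norm{\un{n+1}}_{\bH^{1}(\Omega_{F})} $ by a Korn-type inequality valid on $ \Ve_{F} $ (the conditions $ \un{n+1}\cdot\nuF=0 $ on $ \Gamma_{\rm b} $ and $ \un{n+1}\cdot\tauF=0 $ on $ \Gamma_{\rm in/out} $ ruling out rigid motions), then by $ \norm{\un{n+1}}_{\bW^{1,p}(\Omega_{F})} $ via the embedding $ \bW^{1,p}(\Omega_{F})\hookrightarrow\bW^{1,2}(\Omega_{F}) $ (valid since $ \Omega_{F} $ is bounded and $ p>2 $), and finally relate this to the $ p $-dissipation of $ \DN{n+1} $ through the lower bound on $ J^{n+1} $. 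A Young inequality then moves a small multiple of $ \DN{n+1} $ to the left and leaves a remainder bounded by $ C\Dt(\norm{\bR^{n+1}}_{(\bH^{1}(\Omega_{F}))'}+1) $. Collecting the identities above and discarding the nonnegative numerical-dissipation pieces on the left that are not needed yields \eqref{estflu2}. I expect this last step to be the crux: because the $ p $-structure produces dissipation only in $ \bL^{p} $ rather than in $ \bL^{2} $, controlling the boundary data $ \bR $ forces one to chain Korn's inequality, the Sobolev embedding $ \bW^{1,p}(\Omega_{F})\hookrightarrow\bW^{1,2}(\Omega_{F}) $, a trace estimate and the uniform Jacobian bounds, none of which is needed in the classical Newtonian setting; the remaining point to watch is keeping the $ J^{n} $ versus $ J^{n+1} $ weights straight so that the telescoping of $ E_{kin,N} $ across the two half-steps is exact.
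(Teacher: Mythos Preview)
Your proposal is correct and follows essentially the same approach as the paper: test \eqref{weakf} with $(\bq,\bphi)=(\un{n+1},\vn{n+1})$, apply the algebraic identity of Remark \ref{identity} and the coercivity \eqref{Coercivity}, then estimate $\Dt\inner{\bR^{n+1}}{\un{n+1}}$ via the trace/duality pairing, the Sobolev embedding $\bW^{1,p}\hookrightarrow\bH^{1}$, and Young's inequality with a small parameter absorbed into the $p$-dissipation. You are in fact more explicit than the paper about the Korn step and the $J^{n}$ versus $J^{n+1}$ bookkeeping, but the route is the same.
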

	\begin{proof}
		Taking $ \bq = u^{n+1} $, $ \bphi = \bv^{n+1} $ in \eqref{weakf}, combining with \eqref{Coercivity} and using Remark \ref{identity}, we find that
		$$
			\ad{
				& \onehalf \int_{\Omega_{F}} J^n \left( \abs{\bu^{n+1}}^2 + \abs{\bu^{n+1} - \bu^{n+\onehalf}}^2 - \abs{\bu^{n+\onehalf}}^2 \right)
				+ \onehalf \int_{\Omega_{F}} \left( J^{n+1} - J^n \right) \abs{\bu^{n+1}}^2 \\
				& \quad + 2 \Dt \int_{\Omega_{F}} J^n \left( \kappa_{1} \abs{\bD^{\tetan{n+1}}(\bu^{n+1})}^p - \kappa_{2} \right)
				+ \frac{\Dt}{\alpha} \int_{\Gamma} \abs{\bu_{\tauF^{n+1}} - \bv_{\tauF^{n+1}}}^2 J_F^{n+1} \\
				& \quad + \onehalf \left( \normg{\bv^{n+1}} + \normg{\bv^{n+1} - \bv^n} - \normg{\bv^n} \right) \\
				& \leq \Dt  \left( P_{\rm in}^n (t) \int_{\Gamma_{\rm in}} \bu^{n+1} \cdot \nuF - P_{\rm out}^n (t) \int_{\Gamma_{\rm out}} \bu^{n+1} \cdot \nuF \right).
			}
		$$
		By using the trace inequality and Sobolev inequality, we obtain that for $ p > 2 $,
		\begin{align*}
			\abs{P_{\rm in}^n (t) \int_{\Gamma_{\rm in}} \bu^{n+1} \cdot \nuF}
			& \leq \abs{P_{\rm in}^n (t)} \abs{\int_{\Gamma_{\rm in}} \bu^{n+1} \cdot \nuF} \\
			& \leq c \abs{P_{\rm in}^n (t)} \norm{\bu^{n+1}}_{\bH^1(\Omega_{F}(t))} \\
			& \leq \frac{c}{2 \varepsilon} \abs{P_{\rm in}^n (t)}^2 + \frac{c \varepsilon}{2} \norm{ \bu^{n+1} }_{\bH^1(\Omega_{F}(t))}^2 \\
			& \leq \frac{c}{4 \varepsilon} \abs{P_{\rm in}^n (t)}^2 + \frac{c^* \varepsilon}{2} \norm{ \bu^{n+1} }_{\bW^{1,p}(\Omega_{F}(t))}^p.
		\end{align*}
		Moving $ 2 \Dt \int_{\Omega_{F}} J^n \kappa_{2} $ to the right-hand side of above inequality and proceeding as in \cite{MC2014JDE,MC2016JDE}, we choose $ \varepsilon > 0 $ small enough such that $ c^* \varepsilon \leq \kappa_{1} $ to complete the proof.
	\end{proof}
	
	Subsequently, we obtain the uniform energy estimates in the following. We \textbf{add the subscript $ N $ to all variables} so that we can pass to the limit as $ N \rightarrow \infty $ in next section.
	
	\begin{lemma}[Uniform energy estimates]\label{estunif}
		Let $ \Delta t > 0 $, $ N = T/\Delta t $ and $ 0 < T \leq T^* $, then we have the following estimates:
		\begin{enumerate}
			\item[$ \mathrm{1.} $] $ \EN{n+\onehalf} \leq K $, $ \EN{n+1} \leq K $, for all $ n = 0, 1, \cdots, N - 1. $
			\item[$ \mathrm{2.} $] $ \displaystyle \sum_{j=1}^{N}\DN{j} \leq K $.
			\item[$ \mathrm{3.} $] $ \displaystyle \sum_{n=0}^{N-1}\left( \int_{\Omega_{F}} \JN{n} \abs{\bu_N^{n+1} - \bu_N^{n}}^{2} + \normg{\bv_N^{n+1} - \bv_N^{n+\onehalf}} \right. \\
			\left. \qquad\qquad + \normg{\bv_N^{n+\onehalf} - \bv_N^{n}} + \normos{\bV_N^{n+1} - \bV_N^{n}} \right) \leq K, $
			\item[$ \mathrm{4.} $] $ \displaystyle \sum_{n=0}^{N-1}\left( \normgh{\bbeta_N^{n+1} - \bbeta_N^{n}} + \as{\bd_N^{n+1} - \bd_N^{n}}{\bd_N^{n+1} - \bd_N^{n}} \right) \leq K, $
		\end{enumerate}
	where $ K = E_{0} + C \left( \norm{P_{\rm in}}_{L^2(0,T)}^2 + \norm{P_{\rm out}}_{L^2(0,T)}^2 + T^* \right) $ and $ C $ depends on $ \Omega_{F} $ and other parameters in the problem.
	\end{lemma}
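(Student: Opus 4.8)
The strategy is to add the two semi-discrete energy inequalities \eqref{eststr2} and \eqref{estflu2}, exploit the cancellation of the intermediate energies built into the Lie splitting, and then telescope in $n$. In the structure step the fluid velocity is frozen, $\bu^{n+\onehalf}=\bu^n$, so the fluid kinetic energy is untouched there; in the fluid step the displacements are frozen, $\bbeta^{n+1}=\bbeta^{n+\onehalf}$ and $\bd^{n+1}=\bd^{n+\onehalf}$, so the elastic energy $E_{el,N}^{n+1}$ is untouched. Writing $\EN{n+\onehalf}=E_{kin,N}^{n+\onehalf}+E_{el,N}^{n+1}$ and $\EN{n+1}=E_{kin,N}^{n+1}+E_{el,N}^{n+1}$, the quantity $E_{kin,N}^{n+\onehalf}$ occurs on both sides of the sum of \eqref{eststr2} and \eqref{estflu2} and cancels, leaving the per-step estimate
\[
\EN{n+1} + X_n + \DN{n+1} \le \EN{n} + C\Dt\left(\norm{\bR^{n+1}}_{\left(\bH^1(\Omega_F)\right)'}+1\right),
\]
where, after using $\bu^{n+\onehalf}=\bu^n$, $\bV^{n+1}=\bV^{n+\onehalf}$, $\bbeta^{n+1}=\bbeta^{n+\onehalf}$, $\bd^{n+1}=\bd^{n+\onehalf}$, the remainder is
\[
X_n = \onehalf\int_{\Omega_F}\Jn\abs{\bu^{n+1}-\bu^n}^2 + \onehalf\normg{\bv^{n+1}-\bv^{n+\onehalf}} + \onehalf\normg{\bv^{n+\onehalf}-\bv^n} + \onehalf\normos{\bV^{n+1}-\bV^n} + \onehalf\normgh{\bbeta^{n+1}-\bbeta^n} + \onehalf\as{\bd^{n+1}-\bd^n}{\bd^{n+1}-\bd^n}.
\]

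I would then sum over $n=0,\dots,m-1$ for arbitrary $1\le m\le N$; the energy telescopes, and because the forcing on the right is independent of the energy no discrete Gronwall step is needed:
\[
\EN{m} + \sum_{n=0}^{m-1}X_n + \sum_{n=1}^{m}\DN{n} \le \EN{0} + C\Dt\sum_{n=1}^{m}\left(\norm{\bR^{n}}_{\left(\bH^1(\Omega_F)\right)'}+1\right).
\]
For the forcing, since $\bR^{n}=\frac{1}{\Dt}\int_{(n-1)\Dt}^{n\Dt}\bR$, Jensen's inequality gives $\Dt\norm{\bR^n}_{(\bH^1(\Omega_F))'}^2\le\int_{(n-1)\Dt}^{n\Dt}\norm{\bR}_{(\bH^1(\Omega_F))'}^2$; combined with Young's inequality $\norm{\bR^n}\le\onehalf(1+\norm{\bR^n}^2)$ and $\Dt\sum_{n=1}^{m}1\le T$ this yields $C\Dt\sum_{n=1}^{m}(\norm{\bR^{n}}_{(\bH^1(\Omega_F))'}+1)\le\tilde{C}(\norm{\bR}_{L^2(0,T;(\bH^1(\Omega_F))')}^2+1)$. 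With $\EN{0}=E_0$ this is exactly the constant $C$ claimed in the statement.

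Finally I would read off the four estimates. By the ALE Jacobian positivity from Section~\ref{ALE} one has $\Jn>0$ on the relevant time interval, so every term of $X_n$ and of $\DN{n+1}$ is nonnegative; dropping $\sum X_n$ and $\sum\DN{n}$ from the master inequality gives $\EN{m}\le C$ for all $m$, whence $\EN{n+\onehalf}\le\EN{n}\le C$ by \eqref{eststr2}, which is the first estimate. Dropping $\EN{m}$ and $\sum X_n$ gives $\sum_{n=1}^{N}\DN{n}\le C$, the second; dropping $\EN{m}$ and $\sum\DN{n}$ and recalling the form of $X_n$ gives the third and fourth.

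The one place that requires care is the first step: keeping track of exactly which variables are frozen in which half-step so that $E_{kin,N}^{n+\onehalf}$ cancels precisely and the difference terms reassemble into the exact combinations of estimates~3 and~4, together with checking $\DN{n+1}\ge0$ via $\Jn>0$. Everything after that is the routine telescoping-plus-Young computation, with no Gronwall inequality needed because the scheme is designed so that the discrete energy balance closes on its own.
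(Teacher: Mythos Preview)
Your proposal is correct and follows essentially the same approach as the paper: add \eqref{eststr2} and \eqref{estflu2}, telescope in $n$, control the forcing via Jensen/Young, and read off the four bounds (with $\EN{n+\onehalf}\le\EN{n}$ coming from \eqref{eststr2} alone). If anything, you are more explicit than the paper about the cancellation of $E_{kin,N}^{n+\onehalf}$ and the frozen-variable bookkeeping; the only quibble is that the discrete positivity $\JN{n}>0$ is actually justified in Proposition~\ref{existencetime} rather than directly in Section~\ref{ALE}.
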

	\begin{proof}
		Adding \eqref{eststr2} and \eqref{estflu2}, and taking sum for $ n $ from $ 0 $ to $ N-1 $, we find that
		\begin{equation}\label{sumofE}
			\ad{
				& \EN{N} + \sum_{n=0}^{N-1} \DN{n+1}
				+ \onehalf \sum_{n=0}^{N-1} \biggl( \int_{\Omega_{F}} \JN{n} \abs{\un{n+1} - \un{n}}^2
				+ \normg{\vn{n+1} - \vn{n+\onehalf}}  \biggr.\\
				& \quad \biggl.
				+ \normg{\vn{n+\onehalf} - \vn{n}}
				+ \normos{\VN{n+1} - \VN{n}} \biggr.\\
				& \quad \biggl.
				+ \normgh{\etan{n+1} - \etan{n}}
				+ \as{\dn{n+1} - \dn{n}}{\dn{n+1} - \dn{n}} \biggr) \biggr.\\
				& \biggl.
				\leq E_{0} + C \sum_{n=0}^{N-1} \Dt \left( \left( P_{\rm in}^n \right)^2 + \left( P_{\rm out}^n \right)^2 + 1 \right).
			}
		\end{equation}
		By the definition of $ P_{\rm in/out}^n $ and the H\"{o}lder's inequality, we obtain
		\begin{align*}
			\sum_{n=0}^{N-1} \Dt \left( P_{\rm in}^n \right)^2
			& = \sum_{n=0}^{N-1} \Dt \left( \frac{1}{\Dt} \int_{n\Dt}^{(n+1)\Dt} P_{\rm in} \right)^2 = \frac{1}{\Dt} \sum_{n=0}^{N-1} \left( \int_{n\Dt}^{(n+1)\Dt} P_{\rm in} \right)^2 \\
			& \leq \frac{1}{\Dt} \sum_{n=0}^{N-1} \int_{n\Dt}^{(n+1)\Dt} P_{\rm in}^2 \int_{n\Dt}^{(n+1)\Dt} 1^2 = \norm{P_{\rm in}}_{L^2(0,T)}^2.
		\end{align*}
		Then the last term in \eqref{sumofE} becomes
		\begin{align*}
			C \sum_{n=0}^{N-1} \Dt \left( \left( P_{\rm in}^n \right)^2 + \left( P_{\rm out}^n \right)^2 + 1 \right) \leq C \left( \norm{P_{\rm in}}_{L^2(0,T)}^2 + \norm{P_{\rm out}}_{L^2(0,T)}^2 + T^* \right).
		\end{align*}
		Therefore, we have the estimates 2, 3, and 4.
		It can be deduced form summing from $ 0 $ to $ n $ instead of from $ 0 $ to $ N-1 $ that $ \EN{n+1} \leq K $. Next, \eqref{eststr2} implies $ \EN{n+\onehalf} \leq \EN{n} \leq K $.
	\end{proof}
	
%
	
\subsection{Uniform boundedness}
	\label{uniformboundedness}
	
	Let $ \etan{n} $, $ \vn{n-\onehalf} $, $ \dn{n} $, $ \VN{n} $ be the solutions of \eqref{weaks} given in Lemma \ref{structurepro} and $ \un{n} $, $ \vn{n} $ be the solutions of \eqref{fluidsubpro} given in Lemma \ref{fluidpro}. Now we are in position to give a sequence of approximate solution on $ (0,T) $. Following the procedure in \cite{CGM2020,MC2016JDE}, we define approximate solutions on each time subinterval $ (n\Dt, (n+1)\Dt] $ as the \textbf{piecewise constant functions}:
	\begin{align*}
		& \uNn(t, \cdot) = \un{n+1},
		\ \vNn(t, \cdot) = \vn{n+1},
		\ \etaNn(t, \cdot) = \etan{n+1}, \\
		& \dNn(t, \cdot) = \dn{n+1},
		\ \VNn(t, \cdot) = \VN{n+1},
		\ \tvNn(t, \cdot) = \vn{n+\onehalf},	
	\end{align*}
	for $ n = 0, 1, \cdots, N - 1 $.
	Here we use $ \tvNn(t, \cdot) = \vn{n+\onehalf} $ to represent the thin elastic structure velocity in structure subproblem and $ \vNn(t, \cdot) = \vn{n+1} $ to represent the thin elastic structure velocity in fluid subproblem.	Since in fluid subproblem, the trace of $ \un{n+1} $ in $ \Gamma(t) $ is $ \vn{n+1} $ and it is different from the ``unchanged'' displacement velocity
	\begin{align*}
		\frac{\etan{n+1} - \etan{n}}{\Dt} = \frac{\etan{n+\onehalf} - \etan{n}}{\Dt} = \vn{n+\onehalf}.
	\end{align*}
	In the subsequent Lemma \ref{weakstarcon}, we will show that they are exactly the same in some sense when we pass to the limit as $ N \rightarrow \infty $.
	
	Analogously, for $ n = 0, 1, \cdots, N - 1 $, other approximate quantities can be defined as
	\begin{align*}
		& \ANn(t, \cdot) = \AN{n+1},
		\ \nuFN(t, \cdot) = \nuF^{n+1},
		\ \tauFN(t, \cdot) = \tauF^{n+1}, \\
		& \wNn(t, \cdot) = \wn{n+1},
		\ J_{F,N}(t, \cdot) = J_F^{n+1},
		\ \JNN(t, \cdot) = \JN{n+1}.
	\end{align*}
	
	As discussed in Section \ref{ALE}, we need to determine the time interval of existence of solutions. With compatibility conditions \eqref{u0}--\eqref{eta0}, we have the following proposition due to the construction of $ \AN{n+1} $ in Section \ref{fluidsubproblem}:
	\begin{proposition}[\cite{MC2016JDE}]
		\label{existencetime}
		There is a $ T^* > 0 $ small enough and a positive constant $ C $ such that $ \AN{n+1} $ is an injection, and
		$$
		\JN{n+1} = \det \nabla \AN{n+1} > 0, \quad \Dt = \frac{T^*}{N} > 0 ,\  n = 0, 1, \cdots, N - 1.
		$$
	\end{proposition}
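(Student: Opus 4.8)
The plan is to reduce everything to two quantitative statements about the harmonic ALE map: first, a lower bound on $\JN{n}=\det\nabla\AN{n}$ that is uniform in $n$ and $N$ for $T$ small, and second, injectivity of $\AN{n}$ on the same short time interval. The starting point is the uniform energy bound of Lemma \ref{estunif}, item 1, which gives $\norm{\etan{n}}_{\bH_0^2(\Gamma)}\le C$ for all $n$ with $C$ independent of $N$; combined with the discrete kinematic relation $\etan{n}-\etan{n-1}=\Dt\,\vn{n-\onehalf}$ and items 3--4 of Lemma \ref{estunif} controlling the increments, one obtains a bound of the form $\norm{\etan{n}-\bbeta_0}_{\bH^{2}(\Gamma)}\le C\sqrt{n\Dt}\le C\sqrt{T}$ by Cauchy--Schwarz in the telescoping sum (this is exactly the mechanism used for the continuous problem in \cite{MC2016JDE}). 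Together with the smallness assumption \eqref{eta0} on $\norm{\bbeta_0}_{H^{11/6}}$, this makes $\norm{\etan{n}}$ small in a norm that embeds into $C^{0}(\overline\Gamma)$ (and even into a little bit of Hölder regularity, using $H^{11/6}\hookrightarrow C^{0,\gamma}$ in one dimension), uniformly in $n$ and $N$, once $T$ is chosen small.

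Next I would transfer this smallness to $\AN{n}$ via elliptic regularity for the harmonic extension problem \eqref{harmonicA}: since $\AN{n}-\mathbf{id}$ solves a Laplace equation on the fixed domain $\Omega_F$ with boundary datum $\etan{n}$ on $\Gamma$ and zero on $\Sigma$, standard estimates give $\norm{\AN{n}-\mathbf{id}}_{C^{1}(\overline{\Omega_F})}\le C\,\norm{\etan{n}}_{H^{s}(\Gamma)}$ for a suitable $s$ below $2$ (here the $C^1$ bound is where the slightly-above-$H^{3/2}$ boundary regularity is used, and one invokes the same functional framework as in \cite[Section 3.2]{MC2016JDE}). Consequently $\norm{\nabla\AN{n}-\mathbb I}_{C^{0}(\overline{\Omega_F})}\le C\sqrt{T}+C\,\norm{\bbeta_0}$, which is $<1/2$ for $T$ small; expanding the determinant then yields $\JN{n}=\det\nabla\AN{n}\ge 1-C(\sqrt T+\norm{\bbeta_0})\ge \tfrac12=:C>0$ uniformly in $n=1,\dots,N$ and in $N$. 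Injectivity follows from the same $C^1$-smallness: writing $\AN{n}=\mathbf{id}+(\AN{n}-\mathbf{id})$ with the perturbation a contraction of norm $<1$, for $x\neq y$ one has $|\AN{n}(x)-\AN{n}(y)|\ge |x-y|-\norm{\nabla(\AN{n}-\mathbf{id})}_{C^0}|x-y|>0$, so $\AN{n}$ is one-to-one on $\overline{\Omega_F}$.

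The main obstacle is making the constants genuinely uniform in both discretization parameters simultaneously: the telescoping estimate $\norm{\etan{n}-\bbeta_0}_{\bH^{2-\epsilon}}\lesssim\sqrt{T}$ must be carried out carefully so that no factor of $\Dt^{-1}$ or $N$ leaks in — this is precisely why one estimates in the lower norm $\bH^{2-\epsilon}$ (an interpolation between the uniform $\bH^2$ bound and the summable $\bL^2$ increments) rather than in $\bH^2$ itself, and why assumption \ref{f1} was posed with an $\epsilon$ loss. Once that interpolation step is in place, the rest is the deterministic elliptic-estimate-plus-determinant-expansion argument, and choosing $T=\min\{T',T''\}$ as in Section \ref{ALE} records the two resulting smallness thresholds. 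I would cite \cite{MC2016JDE} for the detailed verification of the uniform-in-$N$ bound, since the discrete argument mirrors the continuous one there verbatim.
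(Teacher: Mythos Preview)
Your proposal is correct and reconstructs exactly the argument the paper defers to: the paper's own ``proof'' simply reads ``It has been proved in \cite[Proposition 5]{MC2016JDE}, so we omit it here,'' and your sketch is a faithful outline of that proposition's proof. One minor imprecision: the telescoping bound in $\bH^2$ does not directly give $C\sqrt{T}$ (summing the $\bH^2$ increments via Cauchy--Schwarz and item 4 of Lemma \ref{estunif} yields $\sqrt{n}$, not $\sqrt{n\Dt}$); the correct route, which you indicate in your final paragraph, is the $L^\infty(L^2)$ bound on $\vn{\,\cdot\,}$ giving $\norm{\etan{n}-\bbeta_0}_{\bL^2}\le CT$ and then interpolation with the uniform $\bH^2$ bound to reach $\bH^{2-\epsilon}$ smallness.
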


	From Proposition \ref{existencetime}, we have the maximal existence time $ T^* $  before which the domain will not degenerate. Then Lemma \ref{estunif} implies the following boundedness properties.
	
	\begin{lemma}\label{bound}
		For a fixed $ \Dt = \frac{T}{N} > 0 $, $ 0 < T \leq T^* $ and $ p > 2 $, the following holds:
		\begin{enumerate}\label{boundsolutions}
			\item[$ \mathrm{1.} $] The sequence $ \setaNn $ is uniformly bounded in $ L^{\infty}(0,T;\bH^{2}_{0}(\Gamma)) $,
			\item[$ \mathrm{2.} $] The sequence $ \stvNn $ is uniformly bounded in $ L^{\infty}(0,T; \bL^{2}(\Gamma)) $,
			\item[$ \mathrm{3.} $] The sequence $ \sdNn $ is uniformly bounded in $ L^{\infty}(0,T; \bH^{1}(\Omega_{S})) $,
			\item[$ \mathrm{4.} $] The sequence $ \sVNn $ is uniformly bounded in $ L^{\infty}(0,T; \bL^{2}(\Omega_{S})) $,
			\item[$ \mathrm{5.} $] The sequence $ \suNn $ is uniformly bounded in $ L^{\infty}(0,T; \bL^{2}(\Omega_{F})) $,
			\item[$ \mathrm{6.} $] The sequence $ \svNn $ is uniformly bounded in $ L^{\infty}(0,T; \bL^{2}(\Gamma)) $.
		\end{enumerate}
	\end{lemma}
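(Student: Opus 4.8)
This lemma is an immediate consequence of the uniform energy estimates in Lemma~\ref{estunif}. Let me sketch the proof.

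The plan is to read off each of the six boundedness statements directly from the uniform bound $\EN{n+\onehalf}, \EN{n+1} \leq C$ established in Lemma~\ref{estunif}, together with the coercivity property $\inner{\LE \bbeta}{\bbeta} \geq \delta_0 \norm{\bbeta}_{\bH_0^2(\Gamma)}^2$ of the elastic operator and the standard Korn-type inequality controlling $\norm{\bd}_{\bH^1(\Omega_S)}^2$ by $\as{\bd}{\bd}$ (using the boundary conditions in $\V_S$). To make the passage from the discrete-in-time quantities to $L^\infty(0,T;\cdot)$ statements, I would first introduce, in the usual way, the piecewise-constant (and piecewise-linear) interpolants in time of the discrete sequences: for $t \in (t_n, t_{n+1}]$ set $\etaNn(t) = \etan{n+1}$, $\tvNn(t) = \vn{n+1}$, and likewise for $\dNn, \VNn, \uNn, \vNn$. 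Since the energy bound $\EN{n+1} \leq C$ holds for \emph{every} $n = 0, \dots, N-1$ with $C$ independent of $N$, taking the essential supremum over $t$ of the corresponding interpolant norm reduces to taking the maximum over $n$ of a quantity bounded by $C$.

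Concretely: statement~1 follows since $\onehalf \delta_0 \normgh{\etan{n+1}} \leq E_{el,N}^{n+1} \leq \EN{n+1} \leq C$, provided also $\Pi(\etan{n+1}) \geq 0$ (or is bounded below, which follows from the lower bound on the potential together with the $\bH^2_0$-bound); statements~2, 5, 6 follow from the kinematic energy terms $\onehalf \normg{\vn{n+1}}$, $\onehalf \int_{\Omega_F} \JN{n}\abs{\un{n+1}}^2$, $\onehalf\normg{\vn{n+1}}$ respectively, where for statement~5 one uses Proposition~\ref{existencetime} to bound $\JN{n}$ from below by a positive constant, so that $\norm{\un{n+1}}_{\bL^2(\Omega_F)}^2 \leq C \int_{\Omega_F}\JN{n}\abs{\un{n+1}}^2 \leq C$; statement~4 follows from $\onehalf \normos{\VN{n+1}} \leq \EN{n+1} \leq C$; and statement~3 follows from $E_{el,N}^{n+1} \geq \onehalf\left( 2\mu_S \normos{\bD(\dn{n+1})} + \normos{\nabla\cdot \dn{n+1}}\right) = \onehalf \as{\dn{n+1}}{\dn{n+1}}$ combined with Korn's inequality on $\V_S$, which gives $\norm{\dn{n+1}}_{\bH^1(\Omega_S)}^2 \leq C \as{\dn{n+1}}{\dn{n+1}} \leq C$. (For the half-step quantities $\vn{n+\onehalf}, \VN{n+\onehalf}$ appearing in statements~2 and~4 one uses instead $\EN{n+\onehalf} \leq C$.)

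The only genuinely non-routine point — and the one I would treat with care — is handling the nonlinear potential term $2\Pi(\etan{n+1})$ hidden inside $E_{el,N}^{n+1}$: the energy bound controls the \emph{sum} of the positive quadratic part and $\Pi$, so to isolate the $\bH^2_0$-norm one needs a lower bound on $\Pi$. This is where assumption~\ref{f2} (existence of a Fréchet-differentiable potential) together with the local Lipschitz bound~\ref{f1} is used: since $\Pi'(\bm 0)$ and hence $\Pi(\bbeta) \geq \Pi(\bm 0) - C_R\norm{\bbeta}_{\bH^{2-\epsilon}}^2$ on bounded sets, and $\bH^{2-\epsilon} \hookrightarrow \bH^2_0$ compactly, one absorbs the lower-order term into $\delta_0 \normgh{\cdot}$ via an interpolation/Young inequality (possibly shrinking $T$ so the constants cooperate), closing the estimate. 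Everything else is bookkeeping on the interpolants, and I would state it briefly.
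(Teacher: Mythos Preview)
Your proposal is correct and matches the paper's approach: the paper states (without writing out a proof) that Lemma~\ref{bound} follows directly from Proposition~\ref{existencetime} and the uniform energy estimates of Lemma~\ref{estunif}, which is exactly the route you take. Your explicit handling of the potential term $\Pi$ and the use of Korn's inequality for statement~3 supply details the paper leaves implicit; note, though, that the sentence around ``$\Pi'(\bm 0)$ and hence $\Pi(\bbeta)\geq\ldots$'' is a bit garbled and could be tightened.
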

	
	Let us denote $ \tetaNn = \TN \etaNn(t, \cdot) = \etaNn(t - \Dt, \cdot) $, $ \Dt = T/N $. We can deduce the following lemma.
	\begin{lemma}\label{boundDU}
		For a pair of conjugate indices $ p $ and $ q $ which satisfies $ \frac{1}{p} + \frac{1}{q} = 1 $, we have
		\begin{enumerate}
			\item[$ \mathrm{1.} $] The sequence $ \left\{ \bD^{\tetaNn}(\uNn) \right\}_{N \in \mathbb{N}} $ is uniformly bounded in $ \bL^{p}((0,T) \times \Omega_{F})^2, $
			\item[$ \mathrm{2.} $] The sequence $ \left\{ \bbS(\bD^{\tetaNn}(\uNn)) \right\}_{N \in \mathbb{N}} $ is uniformly bounded in $ \bL^{q}((0,T) \times \Omega_{F})^{2}. $
		\end{enumerate}
	\end{lemma}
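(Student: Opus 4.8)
The plan is to read both sequences in Lemma~\ref{boundDU} as piecewise-constant-in-time interpolants and to recognize their space--time $\bL^p$ (resp.\ $\bL^q$) norms as, up to the Jacobian weight, the cumulative dissipation $\sum_{j=1}^{N}\DN{j}$, which is uniformly bounded by Lemma~\ref{estunif}(2).

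First I would fix the interpolation conventions. On each subinterval $(t_n,t_{n+1}]$ the piecewise-constant velocity $\uNn$ takes the value $\un{n+1}$, while by construction the time-shifted displacement $\tetaNn=\TN\etaNn$ takes the value $\etan{n}$ there. Hence, by the definition \eqref{symmetrized} of the transformed symmetrized gradient,
\begin{equation*}
\intt \int_{\Omega_{F}} \abs{\bD^{\tetaNn}(\uNn)}^{p}\,\dx\,\dt
= \sum_{n=0}^{N-1} \Dt \int_{\Omega_{F}} \abs{\bD^{\etan{n}}(\un{n+1})}^{p}\,\dx,
\end{equation*}
and the integrand on the right is exactly the one weighted by $\JN{n}$ in $\DN{n+1}$. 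By Proposition~\ref{existencetime} there is a constant $c_{0}>0$, independent of $N$ and $n$, with $\JN{n}\ge c_{0}$ on $\Omega_{F}$, so that
\begin{equation*}
\Dt \int_{\Omega_{F}} \abs{\bD^{\etan{n}}(\un{n+1})}^{p}\,\dx
\le \frac{1}{c_{0}}\, \Dt \int_{\Omega_{F}} \JN{n}\abs{\bD^{\etan{n}}(\un{n+1})}^{p}\,\dx
\le \frac{1}{c_{0}\kappa_{1}}\,\DN{n+1}.
\end{equation*}
Summing over $n$ and invoking Lemma~\ref{estunif}(2) yields $\norm{\bD^{\tetaNn}(\uNn)}_{\bL^{p}((0,T)\times\Omega_{F})}^{p}\le (c_{0}\kappa_{1})^{-1}C$, which is item~1.

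For item~2 I would use the growth bound \eqref{Boundedness}. Since $q=p/(p-1)$, one has $(p-1)q=p$, hence pointwise $\abs{\bbS(\bD^{\tetaNn}(\uNn))}^{q}\le 2^{q-1}\kappa_{3}^{q}\big(\abs{\bD^{\tetaNn}(\uNn)}^{p}+1\big)$; integrating over the bounded set $(0,T)\times\Omega_{F}$ (so that the constant term contributes $2^{q-1}\kappa_{3}^{q}\,T\abs{\Omega_{F}}<\infty$) and using item~1 gives the uniform $\bL^{q}$ bound. I do not expect a serious obstacle here: the only points needing care are (a) checking that the time-shift in $\tetaNn$ aligns the piecewise-constant symmetrized gradient with the Jacobian-weighted one appearing in $\DN{n+1}$, and (b) the uniform-in-$N$ lower bound $\JN{n}\ge c_{0}$, which rests on Proposition~\ref{existencetime} (and ultimately on the smallness of the initial datum and the choice of $T$); everything else is Hölder's inequality together with the growth condition.
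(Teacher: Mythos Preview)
Your proposal is correct and follows essentially the same approach as the paper: both use Lemma~\ref{estunif}(2) together with the uniform lower bound on $\JN{n}$ from Proposition~\ref{existencetime} for item~1, and then the growth condition~\eqref{Boundedness} with $(p-1)q=p$ for item~2. Your version simply spells out the interpolation conventions and the pointwise estimate for $\abs{\bbS}^{q}$ more explicitly than the paper does.
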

	\begin{proof}
		The estimate 2 in Lemma \ref{estunif} implies that
		$$
			\sum_{n = 0}^{N - 1} \kappa_{1} \int_{\Omega_{F}} \Jn_{N} \abs{\bD^{\tetan{n+1}} (\un{n+1})}^{p} \Dt \leq K.
		$$
		Since the Jacobian determinant $ \JN{n} $ of the ALE mapping $ \ANn^n $ is uniformly bounded from below by a positive constant (see Proposition \ref{existencetime}), we have the boundedness of $ \bD^{\tetaNn}(\uNn) $. Combining this boundedness with \eqref{Boundedness}, we obtain
		\begin{equation*}
			\norm{\bbS \left( \bD^{\tetaNn}(\uNn) \right)}_{ \bL^{q}((0,T) \times \Omega_{F})^{2} }
			\leq K'.
		\end{equation*}
		This completes the proof.
	\end{proof}	
\subsection{Weak and weak* convergence}
	\label{weakconvergence}
	\begin{lemma}[Weak* convergence]\label{weakstarcon}
		For a fixed $ \Dt = \frac{T}{N} > 0 $, $ 0 < T \leq T^* $, there exist subsequences $ \setaNn $, $ \stvNn $, $ \sdNn $, $ \sVNn $, $ \suNn $ and $ \svNn $ and functions $ \bbeta \in L^{\infty}(0,T; \bH^{2}_{0}(\Gamma)) $, $ \bv, \btv \in L^{\infty}(0,T; \bL^{2}(\Gamma)) $, $ \bu \in L^{\infty}(0,T; \bL^{2}(\Omega_{F})) $, $ \bd \in L^{\infty}(0,T; \bH^{1}(\Omega_{S})) $ and $ \bV \in L^{\infty}(0,T; \bL^{2}(\Omega_{S})) $ such that
		\begin{align*}
			\etaNn \rightharpoonup \bbeta &\text{ weakly* in } L^{\infty}(0,T;\bH^{2}_{0}(\Gamma)), \\
			\tvNn \rightharpoonup \btv &\text{ weakly* in } L^{\infty}(0,T;\bL^{2}(\Gamma)), \\
			\dNn \rightharpoonup \bd &\text{ weakly* in } L^{\infty}(0,T; \bH^{1}(\Omega_{S})), \\
			\VNn \rightharpoonup \bV &\text{ weakly* in } L^{\infty}(0,T; \bL^{2}(\Omega_{S})),\\
			\uNn \rightharpoonup \bu &\text{ weakly* in } L^{\infty}(0,T;\bL^{2}(\Omega_{F})),\\
			\vNn \rightharpoonup \bv &\text{ weakly* in } L^{\infty}(0,T;\bL^{2}(\Gamma)).
		\end{align*}
		Furthermore,
		\begin{align*}
			\bv = \btv.
		\end{align*}
	\end{lemma}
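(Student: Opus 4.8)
The argument is a direct application of the sequential Banach--Alaoglu theorem, so the plan is essentially organizational. First I would record the functional-analytic setting: each of the spaces $X \in \{\bH^{2}_{0}(\Gamma),\ \bL^{2}(\Gamma),\ \bH^{1}(\Omega_{S}),\ \bL^{2}(\Omega_{S}),\ \bL^{2}(\Omega_{F})\}$ is a separable Hilbert space, hence reflexive; consequently $L^{1}(0,T;X)$ is a separable Banach space and, since $X$ (being reflexive) has the Radon--Nikodym property, one has the isometric identification $\left( L^{1}(0,T;X) \right)' = L^{\infty}(0,T;X')=L^{\infty}(0,T;X)$. By Lemma \ref{bound}, each of the six sequences $\setaNn$, $\stvNn$, $\sdNn$, $\sVNn$, $\suNn$, $\svNn$ is bounded in the corresponding space $L^{\infty}(0,T;X)$, i.e.\ in the dual of a separable Banach space.

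Next I would extract the subsequences. Since a norm-bounded sequence in the dual of a separable Banach space admits a weakly* convergent subsequence, I would apply this first to $\setaNn$, then pass to a further subsequence along which $\stvNn$ converges, and so on through all six sequences; a standard diagonal extraction then yields a single subsequence, not relabeled, along which all six weak* convergences hold simultaneously, producing the limit functions $\bbeta\in L^{\infty}(0,T;\bH^{2}_{0}(\Gamma))$, $\btv,\bv\in L^{\infty}(0,T;\bL^{2}(\Gamma))$, $\bd\in L^{\infty}(0,T;\bH^{1}(\Omega_{S}))$, $\bV\in L^{\infty}(0,T;\bL^{2}(\Omega_{S}))$ and $\bu\in L^{\infty}(0,T;\bL^{2}(\Omega_{F}))$ listed in the statement.

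There is no genuine analytic difficulty in this lemma; the only point worth emphasizing is that $\stvNn$ (arising from the structure subproblem) and $\svNn$ (arising from the fluid subproblem) are, at this stage, only guaranteed to converge to a priori distinct limits $\btv$ and $\bv$, so both must be retained. The identification $\btv = \bv$, together with the identification of the weak $L^{p}$-limit of $\bD^{\tetaNn}(\uNn)$ and the weak $L^{q}$-limit of $\bbS(\bD^{\tetaNn}(\uNn))$ (whose boundedness is Lemma \ref{boundDU}; reflexivity of $L^{p}$ and $L^{q}$ gives weakly convergent subsequences by the same extraction), is the actually delicate step and is deferred to the strong-convergence analysis, where the domain dependence encoded in the operator $\bD^{\tetaNn}$ must be handled by the localized Minty trick flagged in item (iii) of Section \ref{methodology}.
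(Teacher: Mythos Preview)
Your proposal is correct and is exactly the standard Banach--Alaoglu extraction from the uniform bounds of Lemma \ref{bound}; the paper itself gives no proof for this lemma, treating it as an immediate consequence of those bounds. The only minor comment is that your last paragraph, while accurate, goes beyond what is needed for this lemma and previews later material.
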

	\begin{proof}
		From the uniform boundedness in Lemma \ref{bound}, we get the weak* convergences. It only need to prove $ \bv = \btv $. By the definition of approximate solutions and estimate 3 in Lemma \ref{estunif}, we have
		\begin{align*}
			\norm{\vNn - \tvNn}_{L^2(0,T; \bL^2(\Gamma))}^2
			& = \int_0^T \norm{\vNn - \tvNn}_{\bL^2(\Gamma)}^2
			= \sum_{n=0}^{N-1} \int_{n\Dt}^{(n+1)\Dt} \normg{\bv_N^{n+1} - \bv_N^{n+\onehalf}}  \\
			& = \sum_{n=0}^{N-1} \normg{\bv_N^{n+1} - \bv_N^{n+\onehalf}} \Dt \leq K \Dt.
		\end{align*}
		For a function $ \bphi \in L^2(0,T; \bL^2(\Gamma)) $, setting $ \Dt \rightarrow 0 $ and combining the weak* convergences of $ \vNn $ and $ \tvNn $, we obtain
		\begin{align*}
			\intt \int_{\Gamma} \left(\bv - \btv\right) \cdot \bphi
			& = \intt \int_{\Gamma} \left(\bv - \vNn\right) \cdot \bphi
			+ \intt \int_{\Gamma} \left(\vNn - \tvNn\right) \cdot \bphi
			+ \intt \int_{\Gamma} \left(\tvNn - \btv\right) \cdot \bphi \\
			& \leq \intt \int_{\Gamma} \left(\bv - \vNn\right) \cdot \bphi
			+ \norm{\vNn - \tvNn}_{L^2(0,T; \bL^2(\Gamma))} \norm{\bphi}_{L^2(0,T; \bL^2(\Gamma))} \\
			& \quad	+ \intt \int_{\Gamma} \left(\tvNn - \btv\right) \cdot \bphi \\
			& \rightarrow 0,
		\end{align*}
		which means that $ \bv = \btv $ a.e. in $ (0,T) \times \Gamma $.
	\end{proof}

	By means of the reflexivity and Lemma \ref{boundDU}, we have the weak convergence results.
	\begin{lemma}[Weak convergence]\label{weakcon}
		For a fixed $ \Dt = \frac{T}{N} > 0 $, $ 0 < T \leq T^* $, $ p > 2 $ and $ q $ which satisfies $ \frac{1}{p} + \frac{1}{q} = 1 $, there exist subsequences $ \left\{ \bD^{\tetaNn}(\uNn) \right\}_{N \in \mathbb{N}} $ and $ \left\{ \bbS(\bD^{\tetaNn}(\uNn)) \right\}_{N \in \mathbb{N}} $ and functions $ \bm{M} \in \bL^p((0,T) \times \Omega_{F})^2 $, $ \bm{G} \in \bL^q((0,T) \times \Omega_{F})^2 $ such that
		\begin{equation*}
			\ad{
				\bD^{\tetaNn}(\uNn) \rightharpoonup \bm{M} &\text{ weakly in } \bL^p((0,T) \times \Omega_{F})^2, \\
				\bbS(\bD^{\tetaNn}(\uNn)) \rightharpoonup \bm{G} &\text{ weakly in } \bL^q((0,T) \times \Omega_{F})^2.
			}
		\end{equation*}
	\end{lemma}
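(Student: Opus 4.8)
The plan is to obtain both convergences as an immediate consequence of the uniform bounds already established in Lemma \ref{boundDU} together with the reflexivity of the Lebesgue spaces involved. First I would observe that since $2 < p < \infty$, the conjugate index $q$ satisfies $1 < q < 2$, so both $\bL^{p}((0,T)\times\Omega_{F})$ and $\bL^{q}((0,T)\times\Omega_{F})^{2}$ are reflexive Banach spaces. By item 1 of Lemma \ref{boundDU}, the sequence $\{\bD^{\tetaNn}(\uNn)\}_{N\in\mathbb{N}}$ is bounded in $\bL^{p}((0,T)\times\Omega_{F})$; by the Banach--Alaoglu theorem (equivalently, the Eberlein--\v{S}mulian theorem in reflexive spaces), bounded sequences admit weakly convergent subsequences, so there is a subsequence, not relabelled, and a function $\bm{M}\in\bL^{p}((0,T)\times\Omega_{F})$ with $\bD^{\tetaNn}(\uNn)\rightharpoonup\bm{M}$ weakly in $\bL^{p}((0,T)\times\Omega_{F})$.

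Next I would argue identically for the stress: by item 2 of Lemma \ref{boundDU}, $\{\bbS(\bD^{\tetaNn}(\uNn))\}_{N\in\mathbb{N}}$ is bounded in $\bL^{q}((0,T)\times\Omega_{F})^{2}$, which is reflexive, hence along a further subsequence it converges weakly to some $\bm{G}\in\bL^{q}((0,T)\times\Omega_{F})^{2}$. Passing to this common (diagonal) subsequence ensures both weak convergences hold simultaneously, which is the claim. The lower semicontinuity of the norm under weak convergence also gives $\norm{\bm{M}}_{\bL^{p}}$ and $\norm{\bm{G}}_{\bL^{q}}$ bounded by the same constants appearing in Lemma \ref{boundDU}, although that is not needed for the statement.

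The argument itself is routine; the only point worth emphasizing is what the lemma does \emph{not} yet assert. At this stage $\bm{M}$ and $\bm{G}$ are a priori unrelated objects: we cannot conclude $\bm{M}=\bD^{\bbeta}(\bu)$ nor $\bm{G}=\bbS(\bm{M})$, because identifying these limits requires passing the variable subscript $\tetaNn$ in the deformed gradient to the limit and handling the nonlinearity of $\bbS$. That identification is precisely the delicate step deferred to the later localized Minty-type argument (cf. feature (iii) in Section \ref{methodology}), and it relies on the strong convergences of $\etaNn$ and $\uNn$ proved in Section \ref{strongconvergence}. So the genuine obstacle is postponed; the present lemma is merely the compactness-extraction step that makes that later argument possible.
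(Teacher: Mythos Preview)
Your proposal is correct and matches the paper's approach exactly: the paper itself gives no explicit proof for this lemma, simply prefacing it with the sentence ``By means of reflexivity and Lemma \ref{boundDU}, we have the following weak convergence results.'' Your argument via reflexivity of $\bL^{p}$ and $\bL^{q}$ together with the uniform bounds from Lemma \ref{boundDU} is precisely what is intended, and your observation that the identification $\bm{M}=\bD^{\bbeta}(\bu)$, $\bm{G}=\bbS(\bD^{\bbeta}(\bu))$ is deferred to Lemma \ref{gradientscon} is also consistent with the paper (see the Remark immediately following the lemma).
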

	\begin{remark}
		Here, $ \bm{M} $ and $ \bm{G} $ are unknown since the gradient are not equal in $ \nabla^{\tetaNn} \uNn $ and $ \nabla \bu $. In the limiting process, we can show that $ \bm{M} = \bD^{\bbeta} (\bu) $ and $ \bm{G} = \bbS(\bD^{\bbeta} (\bu)) $ (see Lemma \ref{gradientscon}).
	\end{remark}
	
\subsection{Strong convergence}
	\label{strongconvergence}
	In this section, we prove the strong convergence of weak solution. This convergence is useful when we finally pass to the limit.
\subsubsection{Strong convergence for velocities}
	First, we establish the strong convergence of $ \uNn $ and $ \vNn $. In \cite{MC2019JDE}, Muha and \v{C}ani\'{c} proved a generalized Aubin-Lions-Simon Lemma to deal with the specific problems for which the spatial domain depends on time. More precisely, they made use of the classical Simon's theorem in \cite[Theorem 1]{Simon1987} together with the uniform estimates of the problem and provided the $ L^{2}(0,T;H) $ compactness for the moving domains. This theorem is effective in processing this type of problem and Muha and \v{C}ani\'{c} gave three examples in \cite{MC2019JDE}, whose strong convergences were proved by a more sophisticated procedure before. After \cite{MC2019JDE}, the generalized Aubin-Lions-Simon Lemma was applied in several works, see \cite{CGM2020,MS2019,TW2018,TW2020}.
	
	To do this, we give the generalized Aubin-Lions-Simon compactness lemma for problems on moving domains \cite{MC2019JDE}.
	\begin{theorem}[{\cite[Theorem 3.1]{MC2019JDE}}]\label{compact}
		Let $ V $, $ H $ be Hilbert spaces such that $ V \subset \subset H $. Suppose that $ \{ \uNn \} \subset L^2 (0,T; H) $ is a sequence such that $ \uNn(t, \cdot) = \uNn^n (\cdot) $ on $ ( (n - 1)\Dt, n \Dt ] $, $ n = 1, \dots, N $, with $ N \Dt = T $. Let $ V_{N}^n $ and $ Q_{N}^n $ be Hilbert spaces such that $ ( V_{N}^n, Q_{N}^n ) \hookrightarrow V \times V $, where the embeddings are uniformly continuous w.r.t. $ \Dt $ and $ n $, and $ V_{N}^n \subset \subset \overline{Q_{N}^n}^{H} \hookrightarrow (Q_{N}^n)' $. Let $ \uNn^n \in V_{N}^n $, $ n = 1, \dots, N $. If the following is true:
		\begin{enumerate}[label=(\Alph*)]
			\item \label{A} There exists a universal constant $ C > 0 $ such that for every $ \Dt $
			\begin{enumerate}[label=(A\arabic*)]
				\item \label{A1} $ \sum_{n = 1}^{N} \norm{\uNn^n}_{V_{N}^n}^2 \Dt \leq C $,
				\item \label{A2} $ \norm{\uNn}_{L^{\infty}(0,T; H)} \leq C $.
			\end{enumerate}
			\item \label{B}There exists a universal constant $ C > 0 $ such that
			$$
				\norm{P_{N}^n \frac{\un{n + 1} - \un{n}}{\Dt}}_{( Q_{N}^n )'}
				\leq C \left( \norm{\un{n + 1}}_{V_{N}^{n+1}} + 1 \right), n = 0, \dots, N - 1,
			$$
			where $ P_{N}^n $ is the orthogonal projector onto $ \overline{Q_{N}^n}^{H} $.
			\item \label{C}The function spaces $ Q_{N}^n $ and $ V_{N}^n $ depend smoothly on time in the following sense:
			\begin{enumerate}[label=(C\arabic*)]
				\item \label{C1} For every $ \Dt > 0 $, and for every $ l \in \{ 1, \dots, N - 1 \} $, there exists a space $ Q_{N}^{n,l} \subset V $ and the operators $ J_{N,l,n}^i : Q_{N}^{n.l} \rightarrow Q_{N}^{n+i}, i = 0,1, \dots, l $, such that $ \norm{J_{N,l,n}^i \bq}_{Q_{N}^{n + i}} \leq C \norm{\bq}_{Q_{N}^{n,l}} $, $ \forall \  \bq \in \Q_{N}^{n,l} $, and
				\begin{align}
					& \left( \left( J_{N,l,n}^{j + 1} \bq - J_{N,l,n}^{j} \bq \right), \un{n + j + 1} \right)_{H} \\
					& \qquad\qquad \leq C \Dt \norm{\bq}_{Q_{N}^{n,l}} \norm{\un{n + j + 1}}_{V_{N}^{n + j + 1}}, \quad j \in \{ 0, \dots, l - 1 \},\\
					& \norm{J_{N,l,n}^i \bq - \bq}_{H} \leq C \sqrt{l \Dt} \norm{\bq}_{Q_{N}^{n,l}}, \quad i \in \{ 0, \dots, l \},
				\end{align}
				where $ C > 0 $ is independent of $ \Dt $, $ n $ and $ l $.
				\item \label{C2} Let $ V_{N}^{n,l} = \overline{Q_{N}^{n,l}}^{V} $. There exist the functions $ I_{N,l,n}^{i} : V_{N}^{n + i} \rightarrow V_{N}^{n,l} $, $ i = 0, 1, \dots, l $, and a universal constant $ C > 0 $, such that for every $ \bv \in V_{N}^{n + i} $
				\begin{gather}
					\norm{I_{N,l,n}^{i} \bv}_{V_{N}^{n,l}} \leq C \norm{\bv}_{V_{N}^{n+i}}, \quad i \in \{ 0, \dots, l \}, \\
					\norm{I_{N,l,n}^{i} \bv - \bv}_{H} \leq g(l\Dt) \norm{\bv}_{V_{N}^{n + i}}, \quad i \in \{ 0, \dots, l \}.
				\end{gather}
				where $ g :\mathbb{R}_{+} \rightarrow \mathbb{R}_{+} $ is a universal, monotonically increasing function such that $ g(h) \rightarrow 0 $ as $ h \rightarrow 0 $.
				\item \label{C3} Uniform Ehrling property: For every $ \delta > 0 $, there exists a constant $ C(\delta) $ independent of $ n $, $ l $ and $ \Dt $, such that
				\begin{equation}
					\norm{\bv}_{H} \leq \delta \norm{\bv}_{V_{N}^{n,l}} + C(\delta) \norm{\bv}_{(Q_{N}^{n,l})'}, \quad \bv \text{ in } Q_{N}^{n.l};
				\end{equation}
			\end{enumerate}
		\end{enumerate}
		Then $ \{ \uNn \} $ is relatively compact in $ L^{2}(0,T; H) $.
	\end{theorem}
	
	With the help of Theorem \ref{compact}, we have the following compactness theorem:
	\begin{theorem}\label{compactnessuv}
		The sequence $ \left\{ \left( \uNn, \vNn, \VNn \right) \right\}_{N \in \mathbb{N}} $, introduced in Lemma \ref{weakstarcon}, is relatively compact in $ L^{2}(0,T; H) $, where $ H = \bL^{2}(\OM) \times \bH^{-s}(\Gamma) \times \bH^{-s}(\Omega_{S}) $, $ \OM $ is the union of all parameterized fluid domains and $ 0 < s < \onehalf $.
	\end{theorem}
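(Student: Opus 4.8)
The plan is to invoke the generalized Aubin--Lions--Simon compactness theorem of \cite[Theorem 3.1]{MC2019JDE}, which is tailored to problems whose underlying domain varies in time. To apply it, one must verify its hypotheses in the present setting: (i) a \emph{uniform boundedness} of the approximate sequence in the "energy" norm that embeds compactly into the pivot space, and (ii) a \emph{uniform continuity in time} estimate in a weaker norm (the analogue of a fractional time-derivative bound), both together with the algebraic compatibility of the chosen triple of spaces. I would first fix the functional framework: take $V_F = \V_{F}^{\bbeta}$, $V_W = \bH_0^2(\Gamma)$, $V_S = \V_S$ as the "good" spaces, $H_F = \bL^2(\OM)$, $H_W = \bL^2(\Gamma)$, $H_S = \bL^2(\Omega_S)$ as the intermediate spaces, and the negative-order spaces $\bH^{-s}(\Gamma)$, $\bH^{-s}(\Omega_S)$ (with $s$ large enough, e.g. $s > 1/2$ suffices for the relevant trace terms) as the weak spaces. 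Because the fluid domain $\Omega_F^{\etaNn}(t)$ moves, one works on the fixed reference domain $\Omega_F$ via the ALE map $\AN{n}$ and uses that $\JN{n}$ is uniformly bounded above and below (Proposition \ref{existencetime}), so that $\bL^2(\Omega_F^{\etaNn}(t))$ and $\bL^2(\Omega_F)$ norms are uniformly equivalent.

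The key steps, in order, are as follows. \textbf{Step 1: spatial regularity bound.} From Lemma \ref{estunif} (estimates 1 and 2), the piecewise-constant interpolants $\uNn$ are uniformly bounded in $L^p(0,T;\V_F^{\tetaNn})$ (hence in $L^2(0,T;\bH^1(\Omega_F))$ after pulling back by the ALE map and using $p>2$), $\vNn$ and $\tvNn$ in $L^2(0,T;\bH^2_0(\Gamma))$, and $\VNn$ in $L^2(0,T;\bH^1(\Omega_S))$. These give the needed compact embeddings into $H$ via Rellich--Kondrachov. \textbf{Step 2: uniform time-translation estimate.} Using the difference estimates in Lemma \ref{estunif} (estimate 3) — namely $\sum_n \big(\int_{\Omega_F} \JN{n}|\un{n+1}-\un{n}|^2 + \normg{\vn{n+1}-\vn{n}} + \normos{\VN{n+1}-\VN{n}}\big) \le C$ — together with the semi-discrete weak formulations \eqref{weaks} and \eqref{weakf}, I would bound the discrete time-increments $\|(\uNn,\vNn,\VNn)(\cdot+\tau) - (\uNn,\vNn,\VNn)(\cdot)\|_{L^2(0,T-\tau;H)}$ by $C(\tau + \Dt)$. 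The mechanism: testing the fluid equation with functions in $\V_F^{\tetaNn}$ and the structure equations with functions in $\tilde{\cH}$ controls the duality pairing of the discrete time derivative against the test space, and the nonlinear fluid stress $\bbS(\bD^{\tetaNn}(\uNn))$ is controlled in $\bL^q$ by Lemma \ref{boundDU}, while $f(\bbeta_N)$ is controlled by assumption \ref{f1} and the $\bH^2_0$-bound on $\etaNn$. \textbf{Step 3: verify the structural hypotheses of \cite[Theorem 3.1]{MC2019JDE}} — in particular the coupling/compatibility between the moving-domain function spaces and the uniform extension property of $\AN{n}$, so that the abstract theorem applies to the triple $\big(L^p(0,T;V), L^\infty(0,T;H), \text{discrete time bound}\big)$. \textbf{Step 4: conclude} relative compactness of $\{(\uNn,\vNn,\VNn)\}$ in $L^2(0,T;H)$.

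The main obstacle I anticipate is \textbf{Step 2} for the fluid component, specifically obtaining a uniform (in $N$) bound on the discrete time derivative of $\uNn$ in the dual of a fixed space, \emph{despite} the fact that the test space $\V_F^{\tetaNn}$ itself varies with $N$ (and with $n$): one cannot directly use a single fixed space of divergence-free test functions because the divergence constraint $\nabla^{\tetaNn}\cdot\bq = 0$ is $\etaNn$-dependent. The standard remedy — also the one used by Muha--\v{C}ani\'{c} — is to transform to the reference domain, work with the non-constrained test space $\bL^2(\Omega_F)$ paired against the discrete equation, and absorb the geometry-dependent terms ($\JN{n}$, $\bw^{n+1}$, $\nabla^{\tetaNn}$) using the uniform bounds on the ALE map and its velocity from Proposition \ref{existencetime} and Lemma \ref{estunif}; the convective term is handled by its skew-symmetric form and the uniform $L^2$-in-time $\bH^1$ bound via $p > 2$. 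A secondary technical point is choosing $s$ large enough that the trace of $\uNn$ on $\Gamma$ and the coupling terms land in $\bH^{-s}$, and checking that the pivot space $H$ in the statement is exactly the one for which the abstract lemma yields compactness; I would track constants carefully here but expect no essential difficulty beyond bookkeeping.
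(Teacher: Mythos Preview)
Your overall strategy---invoking the generalized Aubin--Lions--Simon lemma of \cite[Theorem 3.1]{MC2019JDE}---is exactly what the paper does. However, your Step~1 contains a genuine error that would derail the argument: you claim that $\vNn$ is bounded in $L^2(0,T;\bH^2_0(\Gamma))$ and $\VNn$ in $L^2(0,T;\bH^1(\Omega_S))$. These bounds are false. Lemma~\ref{bound} only gives $\vNn \in L^\infty(0,T;\bL^2(\Gamma))$ and $\VNn \in L^\infty(0,T;\bL^2(\Omega_S))$; the $\bH^2_0$ and $\bH^1$ regularity belongs to the \emph{displacements} $\etaNn$, $\dNn$, not to their time derivatives. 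There is no dissipation mechanism for the structure velocities in this model (the thin and thick layers are purely elastic, not viscoelastic), so one cannot hope for any extra spatial smoothness of $\vNn$ or $\VNn$. The paper's remedy is to take the ``strong'' space for the structure velocity components to be $\bL^2(\Gamma)\times\bL^2(\Omega_S)$ itself (with a bit of $\bH^{1/2}$ regularity for $\vNn$ borrowed from the trace of $\uNn$ through the normal coupling), and to exploit the compact embedding $\bL^2\hookrightarrow\hookrightarrow\bH^{-s}$ for $s>0$. This is precisely why the target space $H$ in the statement carries negative-order Sobolev components for the structure velocities.

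A related issue is your choice $s>1/2$; the paper takes $0<s<\tfrac12$. The restriction $s<\tfrac12$ matters for the fluid component, because $\uNn$ lives on the moving domain $\Omega_F^{\etaNn}(t)$ and must be extended by zero to $\OM$; extension by zero is bounded into $\bH^s(\OM)$ only for $s<\tfrac12$. Your Step~2 and the concern about $N$-dependent divergence-free test spaces are essentially on target: the paper handles this via Properties~(B) and~(C) of \cite{MC2019JDE}, introducing moving test spaces $Q_N^n$ (with extra $\bH^4$ regularity on the fluid part) and explicit transfer operators $J_{N,l,n}^i$, $I_{N,l,n}^i$ between consecutive test and solution spaces, rather than by attempting a time-translation estimate in a single fixed dual space as you propose.
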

	\begin{proof}
		Since the model of fluid-thin structure interaction in our work can be found in \cite{MC2016JDE} and \cite[Section 4.3]{MC2019JDE}, the proof of Theorem \ref{compactnessuv} can be easily established by Theorem \ref{compact} with some modifications. We need to take $ L^{p}(0,T; \bW^{1,p}) \hookrightarrow L^{2}(0,T; \bH^{1}) $ into account and make use of the estimates in Lemma \ref{estunif} to verify all the conditions of Theorem \ref{compact}, more specifically, Properties \ref{A}, \ref{B} and \ref{C}. Here, we give a sketch of the proof.
		
		\textbf{Property \ref{A}.} To show \ref{A1} and \ref{A2} of Properties \ref{A}, we define the corresponding spaces as follows.
		\begin{align*}
			V = \bH^s(\OM) \times \bL^2(\Gamma) \times \bL^2(\Omega_{S}),
		\end{align*}
		and
		\begin{align*}
			H = \bL^{2}(\OM) \times \bH^{-s}(\Gamma) \times \bH^{-s}(\Omega_{S}), \quad 0 < s < \onehalf.
		\end{align*}
		Then we have $ V \subset \subset H $. In addition, we choose the moving velocity spaces $ V_N^n $ and moving test spaces $ Q_N^n $ as
		\begin{align*}
			V_N^n = \left\{ (\bu, \bv, \bV) \in \overline{V_{F}^{n}}^{\bH^{1}(\Omega_{F}^n)} \times \bH^{\onehalf}(\Gamma) \times \bL^2(\Omega_{S}): \left( \rv{\bu}_{\Gamma^{n-1}} - \bv \right) \cdot \nuFN^{n-1} = 0 \right\},
		\end{align*}
		\begin{align*}
			Q_N^n = \left\{ (\bq, \bphi, \bpsi) \in \left( \overline{V_{F}^{n}}^{\bH^{1}(\Omega_{F}^n)} \cap \bH^4(\Omega_{F}^n) \right) \times \V_{W} \times \V_{S}: \rv{\bq}_{\Gamma^n} = \bphi, \bphi = \rv{\bpsi}_{\Gamma} \right\}.
		\end{align*}
		such that $ (V_N^n, Q_N^n) \hookrightarrow V \times V $. It follows from the trace theorem $ \norm{\vn{n}}_{\bH^{\onehalf}(\Gamma)}^2 \leq \norm{\un{n}}_{\bH^{1}(\Omega_{F}^n)}^2 $ and Lemma \ref{estunif} that
		\begin{align}
			& \quad \  \sum_{n=1}^{N} \norm{(\un{n}, \vn{n}, \VN{n})}_{V_N^n}^2 \Dt \nonumber\\
			& = \sum_{n=1}^{N} \left( \norm{\un{n}}_{\bH^{1}(\Omega_{F}^n)}^2 + \norm{\vn{n}}_{\bH^{\onehalf}(\Gamma)}^2 + \norm{\VN{n}}_{\bL^{2}(\Omega_{S})}^2 \right) \Dt \nonumber\\
			& \leq C \sum_{n=1}^{N} \left( \norm{\un{n}}_{\bH^{1}(\Omega_{F}^n)}^2 + \norm{\VN{n}}_{\bL^{2}(\Omega_{S})}^2 \right) \Dt \label{uvvdt} \\
			& \leq C.\nonumber
		\end{align}
		From the embedding $ \bL^2 \hookrightarrow \bH^{-s} $, $ 0 < s < \onehalf $ and the uniform boundedness in Lemma \ref{bound}, we deduce that
		\begin{align}
			& \quad \ \norm{(\uNn, \vNn, \VNn)}_{L^{\infty}(0,T; H)}^2 \nonumber\\
			& = \norm{\uNn}_{L^{\infty}(0,T; \bL^2(\OM))}^2
			+ \norm{\vNn}_{L^{\infty}(0,T; \bH^{-s}(\Gamma))}^2
			+ \norm{\VNn}_{L^{\infty}(0,T; \bH^{-s}(\Omega_{S}))}^2 \label{uvvh}\\
			& \leq \norm{\uNn}_{L^{\infty}(0,T; \bL^2(\OM))}^2
			+ \norm{\vNn}_{L^{\infty}(0,T; \bL^2(\Gamma))}^2
			+ \norm{\VNn}_{L^{\infty}(0,T; \bL^2(\Omega_{S}))}^2 \nonumber\\
			& \leq C.\nonumber
		\end{align}
		Then \ref{A1} and \ref{A2} are verified by \eqref{uvvdt} and \eqref{uvvh} respectively.
		
		\textbf{Property \ref{B}.} In the following, we prove the Property \ref{B}. First, we give the weak formulation in the moving domain $ \Omega_{F}^n $:
		\begin{align}
			& \quad \onehalf \int_{\Omega_{F}^n} \bigg(
			\left( \left( \hun{n} - \wn{n+1} \right) \cdot \nabla \right) \un{n+1} \cdot \bq
			- \left( \left( \hun{n} - \wn{n+1} \right) \cdot \nabla \right) \bq \cdot \un{n+1}
			\bigg) \nonumber \\
			& + \int_{\Omega_{F}^n} \frac{\un{n+1} - \hun{n}}{\Dt} \cdot \bq
			+  \onehalf \int_{\Omega_{F}^n} \frac{\JN{n+1} - \JN{n}}{\JN{n} \Dt} \un{n+1} \cdot \bq \nonumber \\
			& + 2 \int_{\Omega_{F}^n} \bbS(\bD(\un{n+1})) : \bD(\bq)
			- \int_{\Gamma} \frac{\vn{n+1} - \vn{n}}{\Dt} \cdot \bphi
			+ \inner{f(\etan{n+1})}{\bphi} \label{weakformmove} \\
			& + \frac{1}{\alpha} \int_{\Gamma} \left( \un{n+1} \cdot \tauF^{n+1} - \vn{n+1} \cdot \tauF^{n+1} \right) \bq \cdot \tauF^{n+1} J_{F}^{n+1} \dz \dt
			+ \inner{\LE \etan{n+1}}{\bphi}  \nonumber \\
			& + \frac{1}{\alpha} \int_{\Gamma} \left( \vn{n+1} \cdot \tauF^{n+1} - \un{n+1} \cdot \tauF^{n+1} \right) \bphi \cdot \tauF^{n+1} J_{F}^{n+1} \dz \dt  \nonumber \\
			& - \int_{\Omega_{S}} \frac{\VN{n+1} - \VN{n}}{\Dt} \cdot \bpsi
			+ \as{\dn{n+1}}{\bpsi} = \int_{\Gamma_{\rm in/out}} P_{\rm in/out}^{n+1} \bq \cdot \nuF, \nonumber
		\end{align}
		for all $ (\bq,\bphi,\bpsi) \in \Q_{N}^n $, where $ \hun{n} = \un{n} \circ \left( \ANn^{n-1} \right)^{-1} $ and $ \frac{1}{\JN{n}} $ is the Jacobian of the mapping $ \left( \AN{n} \right)^{-1} $.
		
		A direct calculation yields
		\begin{align*}
			& \quad \ \norm{P_N^{n+1} \frac{(\un{n+1}, \vn{n+1}, \VN{n+1}) - (\un{n}, \vn{n}, \VN{n})}{\Dt} }_{(Q_N^{n+1})'} \\
			& = \sup_{\norm{(\bq, \bphi, \bpsi)}_{Q_N^{n+1}} = 1}
			\abs{
				\int_{\Omega_{F}^{n}} \frac{\un{n+1} - \un{n}}{\Dt} \cdot \bq
				+ \int_{\Gamma} \frac{\vn{n+1} - \vn{n}}{\Dt} \cdot \bphi
				+ \int_{\Omega_{S}} \frac{\VN{n+1} - \VN{n}}{\Dt} \cdot \bpsi
			} \\
			& \leq \sup_{\norm{(\bq, \bphi, \bpsi)}_{Q_N^{n+1}} = 1}
			\abs{
				\int_{\Omega_{F}^{n}} \frac{\un{n+1} - \hun{n}}{\Dt} \cdot \bq
				+ \int_{\Gamma} \frac{\vn{n+1} - \vn{n}}{\Dt} \cdot \bphi
				+ \int_{\Omega_{S}} \frac{\VN{n+1} - \VN{n}}{\Dt} \cdot \bpsi
			} \\
			& \qquad \qquad + \sup_{\norm{(\bq, \bphi, \bpsi)}_{Q_N^{n+1}} = 1}
			\abs{
				\int_{\Omega_{F}^{n}} \frac{\hun{n} - \un{n}}{\Dt} \cdot \bq
			}.
		\end{align*}
		By means of the weak formulation \eqref{weakformmove}, we obatin
		\begin{align*}
			& \quad \ \abs{
				\int_{\Omega_{F}^{n}} \frac{\un{n+1} - \hun{n}}{\Dt} \cdot \bq
				+ \int_{\Gamma} \frac{\vn{n+1} - \vn{n}}{\Dt} \cdot \bphi
				+ \int_{\Omega_{S}} \frac{\VN{n+1} - \VN{n}}{\Dt} \cdot \bpsi
			} \\
			& \leq \left( \norm{\hun{n}} + \norm{\bw^{n+1}} \right) \norm{\nabla \un{n+1}} \norm{\bq}_{\bL^{\infty}}
			+ \left( \norm{\hun{n}} + \norm{\bw^{n+1}} \right) \norm{\nabla \bq} \norm{\un{n+1}} \\
			& \quad + C \norm{\tilde{\bv}_N^{n+1}} \norm{\un{n+1}} \norm{\bq}_{L^\infty} + \left( \norm{\bD^{\tetan{n+1}}(\un{n+1})}_{\bL^p}^p + \norm{\bD^{\tetan{n+1}}(\un{n+1})} \right) \norm{\bD(q)} \\
			& \quad + \norm{\Delta \bbeta} \norm{\Delta \phi}
			+ C \norm{\bbeta}_{\bH^{2-\varepsilon}} \norm{\phi}_{\bH^2}
			+ \norm{f(\bm{0})}_{H^{2}} \norm{\bphi}_{\bH^2}\\
			& \quad + \norm{\un{n+1} \cdot \tauF^{n+1} - \vn{n+1} \cdot \tauF^{n+1}} \norm{\bq \cdot \tauF^{n+1} - \bphi \cdot \tauF^{n+1} }
			+ \norm{\bd}_{\bH^1} \norm{\bpsi}_{\bH^1} \\
			& \leq C \left( \norm{(\un{n+1}, \vn{n+1}, \VN{n+1})}_{V_N^{n+1}} + 1 \right) \norm{(\bq, \bphi, \bpsi)}_{(Q_N^{n+1})},
		\end{align*}
		where we used assumption \ref{f1} that
		\begin{align*}
			\norm{f(\bbeta)}_{\bH^{2}}
			& = \norm{f(\bbeta) - f(\bm{0}) + f(\bm{0})}_{\bH^{2}} \\
			& \leq \norm{f(\bbeta) - f(\bm{0})}_{\bH^{2}}
			+ \norm{f(\bm{0})}_{\bH^{2}} \\
			& \leq C \norm{\bbeta}_{\bH^{2-\varepsilon}} + \norm{f(\bm{0})}_{H^{2}}.
		\end{align*}
		Following the same procedure in \cite{CGM2020} and \cite{MC2019JDE}, we get
		\begin{align*}
			\abs{
				\int_{\Omega_{F}^{n}} \frac{\hun{n} - \un{n}}{\Dt} \cdot \bq
			} \leq C \norm{(\un{n}, \vn{n}, \VN{n})}_{V_N^{n}} \norm{(\bq, \bphi, \bpsi)}_{(Q_N^{n})}.
		\end{align*}
		Consequently,
		\begin{align*}
			& \quad \ \norm{P_N^{n+1} \frac{(\un{n+1}, \vn{n+1}, \VN{n+1}) - (\un{n}, \vn{n}, \VN{n})}{\Dt} }_{(Q_N^{n+1})'} \\
			& \leq C \left( \norm{(\un{n}, \vn{n}, \VN{n})}_{V_N^{n}} + 1 \right),
		\end{align*}
		which proves Property \ref{B}.
		
		\textbf{Property \ref{C}.} We notice that the rest properties needed to be shown follow from the same procedure in \cite{CGM2020}. Therefore, we provide the definition of several operators and spaces.
		First, we denote the operator $ J_{N,l,n}^i: Q_N^{n,l} \rightarrow Q_N^{n+i} $ by
		\begin{align*}
			J_{N,l,n}^i(\bq, \bphi, \bpsi) = \left( \rv{\bq}_{\Omega_{F}^{n+i}}, \rv{\bq}_{\Gamma^{n+i}}, \bpsi \right),
		\end{align*}
		and space $ Q_N^{n,l} $ by
		\begin{align*}
			Q_N^{n,l} = \left\{ (\bq, \bphi, \bpsi) \in \left( V_F^{n,l} \cap \bH^4(\Omega_{F}^{n,l}) \right) \times \V_{W} \times \V_{S}: \rv{\bq}_{\Gamma^{n}} \cdot \nuF^n = \bphi \cdot \nuF^n, \bphi = \rv{\bpsi}_{\Gamma} \right\}.
		\end{align*}
		Moreover, to establish Property (C2), we need the operator $ I_{N,l,n}^i : V_N^{n+i} \rightarrow V_N^{n,l} $ as
		\begin{align*}
			I_{N,l,n}^i \left( \un{n+i}, \vn{n+i}, \VN{n+i} \right) = \left( \rv{\hun{n+i}}_{\Omega_{F}^{n,l}}, \left( \rv{\hun{n+i}}_{\Gamma^{n}} \cdot \nuF^n \right) \cdot \nuF^n + \left( \vn{n+i} \cdot \tauF^n \right) \cdot \tauF^n, \VN{n+i} \right),
		\end{align*}
		with space
		\begin{align*}
			V_N^{n,l} = \left\{ (\bu, \bv, \bV) \in \bH^{\onehalf}(\Omega_{F}^{n,l}) \times \bL^2(\Gamma) \times \bL^2(\Omega_{S}) : \nabla \cdot \bu = 0, \left( \rv{\bu}_{\Gamma^n} - \bv \right) \cdot \nuF^n = 0 \right\}.
		\end{align*}
		
		Finally, we complete the proof by following \cite{CGM2020} and \cite{MC2019JDE}.
	\end{proof}
	
	\begin{remark}
		In \cite[Theorem 3.2]{MC2019JDE}, there is another condition (A3) need to be verified, namely,
		$$
			\norm{\TN \uNn - \uNn}_{L^2(0,T;H)}^2 \leq \Dt.
		$$
		We note that this condition is not necessary, since it has been shown in \cite[Theorem 3.2]{MC2019JDE} that the conclusion of Theorem \ref{compact} is valid without (A3). The reasons why the authors left it in the main theorem is that this condition is usually satisfied as a by-product of Rothe's method and it nicely shows the role of numerical dissipation in compactness arguments while simplifying the proof.
	\end{remark}
	\begin{remark}
		In our work, the fluid is motioned by a generalized Non-Newtonian constitutive which results in a $ L^{p} $ regularity. However, in order to pass to the limits in the next section, we need the $ L^{2} $ strong convergence instead of the $ L^{p} $ strong convergence. This is because the convergence is used in the first and second convergence of equation \eqref{approximatepro} and it needs a $ L^{2} $ regularity.
	\end{remark}

	
	The compactness stated in Theorem \ref{compactnessuv} implies the following strong convergence results.
	\begin{corollary}
		As $ N \rightarrow \infty $, the following strong convergence results hold:
		\begin{enumerate}
			\item $ \uNn \rightarrow \bu $ in $ L^{2}(0,T; \bL^{2}(\Omega_{F})) $;
			\item $ \tvNn \rightarrow \bv $ in $ L^{2}(0,T; \bH^{-s}(\Gamma)) $, $ 0 < s < \onehalf $;
			\item $ \vNn \rightarrow \bv $ in $ L^{2}(0,T; \bH^{-s}(\Gamma)) $, $ 0 < s < \onehalf $.
		\end{enumerate}
	\end{corollary}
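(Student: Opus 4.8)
The plan is to read the three strong convergences out of the generalized Aubin--Lions--Simon compactness result, Theorem~\ref{compactnessuv}, and to use the weak$^*$ limits of Lemma~\ref{weakstarcon} to identify the limit points. First I would invoke Theorem~\ref{compactnessuv}: the sequence $\{(\uNn,\vNn,\VNn)\}$ is relatively compact in $L^2(0,T;H)$ with $H=\bL^2(\OM)\times\bH^{-s}(\Gamma)\times\bH^{-s}(\Omega_S)$, so along a subsequence (not relabeled)
\begin{align*}
\uNn &\to \hat\bu \ \text{in}\ L^2(0,T;\bL^2(\OM)), \\
\vNn &\to \hat\bv \ \text{in}\ L^2(0,T;\bH^{-s}(\Gamma)), \\
\VNn &\to \hat\bV \ \text{in}\ L^2(0,T;\bH^{-s}(\Omega_S)).
\end{align*}
Strong convergence implies weak convergence, so these are also weak limits; comparing with Lemma~\ref{weakstarcon} and using uniqueness of limits gives $\hat\bu=\bu$, $\hat\bv=\bv$, $\hat\bV=\bV$. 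Since $\Omega_F\subset\OM$, restricting the first convergence to $\Omega_F$ yields assertion~1, $\uNn\to\bu$ in $L^2(0,T;\bL^2(\Omega_F))$.

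The remaining point is to upgrade the negative-order convergence $\vNn\to\bv$ in $L^2(0,T;\bH^{-s}(\Gamma))$ to the $\bL^2(\Gamma)$ topology. I would do this by interpolation against the uniform bound $\|\vNn\|_{L^2(0,T;\bH^{1/2}(\Gamma))}\le C$, which is already established inside the proof of Theorem~\ref{compactnessuv} when verifying Property~(A) --- from the trace estimate $\|\vn{n}\|_{\bH^{1/2}(\Gamma)}\le\|\un{n}\|_{\bH^1(\Omega_F^n)}$ together with the bounds of Lemma~\ref{estunif} and the embedding $L^p(0,T;\bW^{1,p})\hookrightarrow L^2(0,T;\bH^1)$ valid for $p>2$. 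Writing $\bL^2(\Gamma)=[\bH^{-s}(\Gamma),\bH^{1/2}(\Gamma)]_\theta$ with $\theta=2s/(2s+1)\in(0,1)$ and applying the interpolation inequality followed by H\"older in time gives
\[
\|\vNn-\bv\|_{L^2(0,T;\bL^2(\Gamma))}\le C\,\|\vNn-\bv\|_{L^2(0,T;\bH^{-s}(\Gamma))}^{1-\theta}\,\|\vNn-\bv\|_{L^2(0,T;\bH^{1/2}(\Gamma))}^{\theta}\longrightarrow0,
\]
which is assertion~3. Assertion~2 is then immediate, since $\vNn$ and $\tvNn$ differ only by the half-step correction and estimate~3 of Lemma~\ref{estunif} gives
\[
\|\vNn-\tvNn\|_{L^2(0,T;\bL^2(\Gamma))}^2=\sum_{n=0}^{N-1}\Dt\,\normg{\vn{n+1}-\vn{n+\onehalf}}\le C\,\Dt\longrightarrow0,
\]
so $\tvNn\to\bv$ in $L^2(0,T;\bL^2(\Gamma))$ as well; in particular the two weak$^*$ limits of Lemma~\ref{weakstarcon} coincide, $\btv=\bv$.

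I expect the genuine obstacle to be this upgrade step and, underneath it, the $\bH^{1/2}(\Gamma)$-bound on $\vNn$ used there. The abstract compactness lemma only returns the structure velocities in a negative-order space, so one must re-use the $p>2$ regularity of the fluid and the kinematic coupling to climb back to $\bL^2(\Gamma)$; and because of the Navier--slip condition the \emph{tangential} component of $\vNn$ is not simply the trace of $\uNn$ but differs from it by the tangential viscous stress, so that part of the $\bH^{1/2}$ estimate has to be read off carefully from the energy and dissipation bounds, all while transporting traces through the ALE maps $\AN{n}$ and keeping track of the $N$-dependence of $\JN{n}$, $\nuFN$ and $\tauFN$.
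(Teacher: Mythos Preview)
Your proposal is correct and in fact more careful than the paper's own treatment. The paper provides no proof of the Corollary beyond the one-line assertion that it follows from Theorem~\ref{compactnessuv}; you have correctly noticed that this theorem only delivers $\vNn\to\bv$ in $L^2(0,T;\bH^{-s}(\Gamma))$, so an upgrade to $\bL^2(\Gamma)$ is genuinely required, and your interpolation between $\bH^{-s}(\Gamma)$ and $\bH^{1/2}(\Gamma)$ (the latter bound being recorded inside the proof of Theorem~\ref{compactnessuv} as part of Property~(A)) is the standard and valid way to obtain it. The half-step comparison argument for $\tvNn$ via estimate~3 of Lemma~\ref{estunif} is likewise correct and is the natural device; the paper omits it entirely.

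Your closing caveat about the tangential component of $\vNn$ not being the trace of $\uNn$ under Navier slip is well spotted, but note that this is a concern about the paper's justification of the $\bH^{1/2}(\Gamma)$ bound inside Theorem~\ref{compactnessuv} rather than about your deduction of the Corollary from it; once that bound is granted, your interpolation step goes through without issue.
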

%
%
%

\subsubsection{Strong convergence for displacement and geometry parameters}
	In the following, we give the strong convergence of the thin structure displacement. To achieve this goal, we notice that $ \etaNn $ is uniformly bounded in $ W^{1, \infty}(0,T; \bL^2(\Gamma)) \cap \bL^{\infty} (0,T; \bH_0^2(\Gamma)) $, then from the continuous embedding
	$$
		W^{1, \infty}(0,T; \bL^2(\Gamma)) \cap \bL^{\infty} (0,T; \bH_0^2(\Gamma)) \hookrightarrow C^{0,1-\beta} ([0,T]; \bH^{2\beta} (\Gamma)), \quad 0 < \beta < 1,
	$$
	we have uniform boundedness of $ \etaNn $ in $ C^{0,1-\beta} ([0,T]; \bH^{2\beta} (\Gamma)) $. Due to the compact embedding of $ \bH^{2\beta} \hookrightarrow \bH^{2\beta -\epsilon} $ for every fixed $ \epsilon > 0 $ and the fact that functions in $ C^{0,1-\beta} ([0,T]; \bH^{2\beta} (\Gamma)) $ are uniformly continuous in time on finite interval, we find, by applying the Arzela-Ascoli Theorem, that as $ N \rightarrow \infty $
	$$
		\etaNn \rightarrow \bbeta \text{ in } C([0,T]; \bH^{2s}(\Gamma)), \quad 0 < s < 1,
	$$
	and
	$$
		\TN \etaNn \rightarrow \bbeta \text{ in } C([0,T]; \bH^{2s}(\Gamma)), \quad 0 < s < 1.
	$$
	 Then by the similar procedure in \cite[Lemma 3]{MC2013ARMA}, we have the  strong convergence results for structure displacement.
	\begin{theorem}\label{etacon}
		We have the following strong convergence results as $ N \rightarrow \infty $:
		\begin{enumerate}
			\item $ \etaNn \rightarrow \bbeta $ in $ L^{\infty}(0,T; \bH^{2s}(\Gamma)) $, $ 0 < s < 1 $;
			\item $ \TN\etaNn \rightarrow \bbeta $ in $ L^{\infty}(0,T; \bH^{2s}(\Gamma)) $, $ 0 < s < 1 $.
		\end{enumerate}
	\end{theorem}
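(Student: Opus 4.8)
The plan is to obtain Theorem~\ref{etacon} from the vector-valued Arzela--Ascoli theorem, exactly along the lines of the paragraph preceding the statement, and then to upgrade $C([0,T];\bH^{2s}(\Gamma))$ convergence to $L^{\infty}(0,T;\bH^{2s}(\Gamma))$ convergence. First I would extend the discrete structural displacements to their piecewise-linear-in-time interpolation, still denoted $\etaNn$, so that on each subinterval $(t_n,t_{n+1})$ one has $\pt\etaNn=\vn{n+\onehalf}$; by item~2 of Lemma~\ref{bound} this is bounded in $\bL^{2}(\Gamma)$ uniformly in $N$, while item~1 of Lemma~\ref{bound} gives the uniform bound on $\etaNn$ in $L^{\infty}(0,T;\bH_0^{2}(\Gamma))$. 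Hence $\{\etaNn\}_{N\in\mathbb{N}}$ is bounded, uniformly in $N$, in $W^{1,\infty}(0,T;\bL^{2}(\Gamma))\cap L^{\infty}(0,T;\bH_0^{2}(\Gamma))$.

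Next I would interpolate between $\bL^{2}(\Gamma)$ and $\bH_0^{2}(\Gamma)$: for any $0<\beta<1$ and any $t,\tau\in[0,T]$,
\[
  \|\etaNn(t)-\etaNn(\tau)\|_{\bH^{2\beta}(\Gamma)}
  \le C\,\|\etaNn(t)-\etaNn(\tau)\|_{\bL^{2}(\Gamma)}^{1-\beta}\,
      \|\etaNn(t)-\etaNn(\tau)\|_{\bH^{2}(\Gamma)}^{\beta}
  \le C\,|t-\tau|^{1-\beta},
\]
using $\|\etaNn(t)-\etaNn(\tau)\|_{\bL^{2}(\Gamma)}\le|t-\tau|\,\|\pt\etaNn\|_{L^{\infty}(0,T;\bL^2(\Gamma))}$ and the $L^{\infty}(\bH_0^2)$ bound. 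Thus $\{\etaNn\}$ is bounded in $C^{0,1-\beta}([0,T];\bH^{2\beta}(\Gamma))$, hence bounded and equicontinuous in $C([0,T];\bH^{2\beta}(\Gamma))$. Fixing $0<s<1$, I would choose $\beta\in(s,1)$ so that $\bH^{2\beta}(\Gamma)$ embeds compactly into $\bH^{2s}(\Gamma)$ by Rellich's theorem. The Arzela--Ascoli theorem (equicontinuity in time plus pointwise-in-time relative compactness in $\bH^{2s}(\Gamma)$) then extracts from any subsequence a further subsequence converging in $C([0,T];\bH^{2s}(\Gamma))$; its limit coincides, in the sense of distributions, with the weak-$*$ limit $\bbeta$ of Lemma~\ref{weakstarcon}, so by the subsequence principle the whole subsequence fixed in Lemma~\ref{weakstarcon} satisfies $\etaNn\to\bbeta$ in $C([0,T];\bH^{2s}(\Gamma))$. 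Since a continuous $\bH^{2s}(\Gamma)$-valued function on $[0,T]$ has essential supremum equal to its maximum, $C([0,T];\bH^{2s}(\Gamma))$ embeds isometrically into $L^{\infty}(0,T;\bH^{2s}(\Gamma))$, which is item~1. For item~2, write $\TN\etaNn(t)=\etaNn((t-\Dt)^{+})$; this differs from $\etaNn(t)$ by a time shift of length $\le\Dt$, so the Hölder bound above gives $\|\TN\etaNn-\etaNn\|_{L^{\infty}(0,T;\bH^{2s}(\Gamma))}\le C\,\Dt^{1-\beta}\to0$, and combining with item~1 yields $\TN\etaNn\to\bbeta$ in $L^{\infty}(0,T;\bH^{2s}(\Gamma))$.

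The only point requiring care rather than computation is reconciling the various time interpolants: elsewhere in the analysis $\etaNn$ also occurs as a piecewise-constant interpolation, and one must check it converges to the same limit. This is handled as in \cite[Lemma~3]{MC2013ARMA}: the difference between the piecewise-constant and piecewise-linear interpolants is $O(\Dt)$ in $\bL^{2}(\Gamma)$ and bounded in $\bH_0^{2}(\Gamma)$, hence $o(1)$ in $\bH^{2s}(\Gamma)$ by the same interpolation inequality used above. I do not expect a genuine obstacle here; the heart of the argument is simply the interpolation inequality, Rellich compactness, and the correct vector-valued form of Arzela--Ascoli, together with the identification of the strong limit with the already-known weak-$*$ limit.
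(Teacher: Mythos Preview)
Your proposal is correct and follows essentially the same approach as the paper: the paper's argument is precisely the paragraph immediately preceding the theorem (uniform bound in $W^{1,\infty}(0,T;\bL^2)\cap L^\infty(0,T;\bH_0^2)$, interpolation into $C^{0,1-\beta}([0,T];\bH^{2\beta})$, Rellich compactness, Arzela--Ascoli), together with a reference to \cite[Lemma~3]{MC2013ARMA} for the bookkeeping between the different time interpolants and for item~2. Your write-up simply makes these steps explicit, including the identification of the strong limit with the weak-$*$ limit and the $O(\Dt^{1-\beta})$ estimate for the time shift, so there is nothing to correct.
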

	
	Consequently, considering our 2D fluid problem and 1D structure problem, we have $ H^{s}(\Gamma) \hookrightarrow C^1(\overline{\Gamma}) $ for $ s > \frac{3}{2} $. Theorem \ref{etacon} implies the following result.
	\begin{corollary}[Convergence for displacement]\label{etaconvergence}
		The following uniform convergence results hold as $ N \rightarrow \infty $:
		\begin{enumerate}
			\item $ \etaNn \rightarrow \bbeta $ in $ L^{\infty}(0,T; \bC^1(\overline{\Gamma})) $;
			\item $ \TN\etaNn \rightarrow \bbeta $ in $ L^{\infty}(0,T; \bC^1(\overline{\Gamma})) $.
		\end{enumerate}
	\end{corollary}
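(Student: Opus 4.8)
The plan is to deduce the $\bC^{1}$ convergence purely from the $\bH^{2s}$ convergence already obtained in Theorem \ref{etacon}, by exploiting that $\Gamma=(0,L)$ is one–dimensional, so that fractional Sobolev spaces embed into $\bC^{1}$ once the order exceeds $\frac{3}{2}$. Concretely, for any exponent $s$ with $\frac{3}{4}<s<1$ one has $2s>\frac{3}{2}$, hence the continuous (indeed compact) embedding
$$
\bH^{2s}(\Gamma)\hookrightarrow \bC^{1}(\overline{\Gamma});
$$
denote by $C_{s}$ the norm of this embedding operator, which depends only on $s$ and $L$ and in particular not on $N$. Such an $s$ is admissible in Theorem \ref{etacon}, which is stated for every $0<s<1$, so we fix one, say $s=\frac{7}{8}$.

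First I would use continuity of the embedding to bound, for a.e. $t\in(0,T)$,
$$
\norm{\etaNn(t,\cdot)-\bbeta(t,\cdot)}_{\bC^{1}(\overline{\Gamma})}\le C_{s}\,\norm{\etaNn(t,\cdot)-\bbeta(t,\cdot)}_{\bH^{2s}(\Gamma)},
$$
and then take the essential supremum over $t$ to obtain
$$
\norm{\etaNn-\bbeta}_{L^{\infty}(0,T;\bC^{1}(\overline{\Gamma}))}\le C_{s}\,\norm{\etaNn-\bbeta}_{L^{\infty}(0,T;\bH^{2s}(\Gamma))}\longrightarrow 0
$$
as $N\to\infty$, by Theorem \ref{etacon}(1). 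In particular this also shows $\bbeta\in L^{\infty}(0,T;\bC^{1}(\overline{\Gamma}))$, so the assertion is well posed. Replacing $\etaNn$ by $\T_{\Dt}\etaNn$ throughout and invoking Theorem \ref{etacon}(2) gives the second convergence in exactly the same way.

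There is no genuine obstacle in this step: it is a bookkeeping consequence of a fixed, $N$–independent Sobolev embedding, so no uniformity is lost in passing from $\bH^{2s}$ to $\bC^{1}$. The only point requiring a little care is the choice of the interpolation exponent — one needs $2s$ strictly above $\frac{3}{2}$ for the embedding into $\bC^{1}$, while at the same time $s<1$ is required for Theorem \ref{etacon} to apply; the window $\frac{3}{4}<s<1$ is nonempty, which is precisely what makes the upgrade possible. If one prefers, the embedding can be routed through $\bH^{2s}(\Gamma)\hookrightarrow \bC^{1,\,2s-\frac{3}{2}}(\overline{\Gamma})$ for $\frac{3}{4}<s<1$, but the plain statement $\bH^{2s}\hookrightarrow\bC^{1}$ is all that is needed here.
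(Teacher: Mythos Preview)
Your proposal is correct and follows essentially the same route as the paper: the authors note that since $\Gamma$ is one-dimensional one has the Sobolev embedding $\bH^{2s}(\Gamma)\hookrightarrow \bC^{1}(\overline{\Gamma})$ once $2s>\tfrac{3}{2}$, and then invoke Theorem~\ref{etacon} for such an $s\in(\tfrac{3}{4},1)$. Your write-up is in fact more explicit than the paper's one-line justification, spelling out the $N$-independence of the embedding constant and the nonemptiness of the admissible window for $s$.
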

	
	To pass to the limit in the weak formulation, we still need the convergence of geometry parameters due to the effect of Navier-slip. In this sense, both normal and tangential structure displacements are considered to be non-zero. This may bring additional difficulties when take $ N \rightarrow \infty $. By means of above statements and the explicit formulas of the normals $ \nuFN $, the tangents $ \tauFN $ and quantities associated with $ \ANn $, we can deduce the corresponding strong convergence result as follows :
	\begin{corollary}[Convergence for geometry quantities, see also \cite{MC2016JDE}]\label{geoconvergence}
		For $ \nuFN $, $ \tauFN $ and quantities associated with $ \ANn $ as defined earlier, we have the following convergence as $ N \rightarrow \infty $:
		\begin{enumerate}
			\item $ \nuFN \rightarrow \nuF^{\bbeta} $ in $ L^{\infty}(0,T; C (\overline{\Gamma})) $;
			\item $ \tauFN \rightarrow \tauF^{\bbeta} $ in $ L^{\infty}(0,T; C (\overline{\Gamma})) $;
			\item $ \wNn \rightarrow \bw^{\bbeta} $ in $ L^{2}(0,T; H^1 (\Omega_{F})) $;
			\item $ J_{F,N} \rightarrow J_{F}^{\bbeta} $ in $ L^{\infty}(0,T; C (\overline{\Gamma})) $;
			\item $ \JNN \rightarrow \Jeta $ in $ L^{\infty}(0,T; C (\overline{\Omega}_{F})) $;
			\item $ \TN \JNN \rightarrow \Jeta $ in $ L^{\infty}(0,T; C (\overline{\Omega}_{F})) $;
			\item $  \left( \nabla \ANn \right)^{-1} \rightarrow \left( \nabla \bA_{\bbeta} \right)^{-1} $ in $ L^{\infty}(0,T; C (\overline{\Omega}_{F})) $.
		\end{enumerate}
	\end{corollary}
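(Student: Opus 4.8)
The plan is to reduce all seven convergences to the single analytic input furnished by Corollary \ref{etaconvergence}, namely the uniform-in-space convergence of $\etaNn$ and of $\T_{\Dt}\etaNn$ to $\bbeta$ in $L^\infty(0,T;\bC^1(\overline{\Gamma}))$ — equivalently, by Theorem \ref{etacon}, in $L^\infty(0,T;\bH^{2-\epsilon}(\Gamma))$ for small $\epsilon>0$ — together with the uniform non-degeneracy $\JNN\ge C>0$ of Proposition \ref{existencetime}. The unifying observation is that each geometric quantity is an explicit, smooth algebraic function either of $(\etaNn,\partial_z\etaNn)$ restricted to $\Gamma$, or of $\nabla\ANn$ restricted to $\overline{\Omega}_{F}$; so it suffices to combine the convergence of these arguments with the uniform continuity of the corresponding nonlinear maps on the compact set in which the arguments are confined and where the relevant denominators (the length of the interface tangent vector, the Jacobian determinant) are bounded away from zero, the latter because $\norm{\bbeta}$ is small.

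For items 1, 2 and 4, I would parametrize $\Gamma^{\bbeta}(t)$ by $\bvphi^{\bbeta}=\mathrm{id}+\bbeta$, so that $\tauF^{\bbeta}=\partial_z\bvphi^{\bbeta}/|\partial_z\bvphi^{\bbeta}|$, $\nuF^{\bbeta}$ equals $\tauF^{\bbeta}$ rotated by $\pi/2$, and $J_{F}^{\bbeta}=|\partial_z\bvphi^{\bbeta}|$. Since $\partial_z\etaNn\to\partial_z\bbeta$ uniformly on $[0,T]\times\overline{\Gamma}$ and $|\partial_z\bvphi^{\bbeta}|\ge 1-\norm{\partial_z\bbeta}_{L^\infty}>0$, composing with the smooth maps $\xi\mapsto\xi/|\xi|$ and $\xi\mapsto|\xi|$ yields the stated $L^\infty(0,T;C(\overline{\Gamma}))$ limits; the same argument with $\T_{\Dt}\etaNn$ in place of $\etaNn$ covers wherever the shifted displacement enters.

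For items 5, 6 and 7, I would use that $\ANn$ is the harmonic extension \eqref{harmonicA} of the boundary datum $\mathrm{id}+\etaNn$. From $\etaNn\to\bbeta$ in $L^\infty(0,T;\bH^{2-\epsilon}(\Gamma))$, elliptic regularity for $-\Delta$ on $\Omega_{F}$ lifts this to $\ANn\to\bA_{\bbeta}$ in $L^\infty(0,T;\bH^{5/2-\epsilon}(\Omega_{F}))$, which for $\epsilon<1/2$ embeds in $L^\infty(0,T;\bC^1(\overline{\Omega}_{F}))$; hence $\nabla\ANn\to\nabla\bA_{\bbeta}$ uniformly on $[0,T]\times\overline{\Omega}_{F}$. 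Then $\JNN=\det\nabla\ANn\to\det\nabla\bA_{\bbeta}=\Jeta$ uniformly since $\det$ is polynomial, and since $\JNN\ge C>0$ uniformly the matrix-inversion map is Lipschitz on the relevant range, so $1/\nabla\ANn\to 1/\nabla\bA_{\bbeta}$ uniformly; because $\T_{\Dt}\etaNn$ converges to the same limit, $\T_{\Dt}\JNN\to\Jeta$ follows in exactly the same way.

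The genuine obstacle is item 3, $\wNn\to\bw^{\bbeta}$ in $L^2(0,T;\bH^1(\Omega_{F}))$, where uniform $C^1$ convergence of $\bbeta$ is not enough because a time difference quotient must be passed to a time derivative. I would observe that $\wNn=(\AN{n+1}-\AN{n})/\Dt$ is harmonic on $\Omega_{F}$ with trace $(\etan{n+1}-\etan{n})/\Dt=\vn{n+\onehalf}$ on $\Gamma$ and $\bm{0}$ on $\Sigma$, i.e. $\wNn$ is the harmonic extension of the (piecewise-constant-in-time) structure velocity $\tvNn$, while $\bw^{\bbeta}=\pt\bA_{\bbeta}$ is the harmonic extension of $\pt\bbeta=\bv$. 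Continuity of the harmonic extension operator $\bL^2(\Gamma)\to\bH^{1/2}(\Omega_{F})$ together with the strong convergence $\tvNn\to\bv$ in $L^2(0,T;\bL^2(\Gamma))$ (the corollary following Theorem \ref{compactnessuv}) already gives $\wNn\to\bw^{\bbeta}$ in $L^2(0,T;\bH^{1/2}(\Omega_{F}))$; interpolating this strong convergence against a uniform higher-regularity bound for $\wNn$ — obtained, as in \cite{MC2016JDE}, by combining the structure estimates of Lemma \ref{estunif} with the trace regularity of the $\bW^{1,p}$-bounded fluid velocity transferred through the kinematic and slip conditions — then upgrades it to $L^2(0,T;\bH^1(\Omega_{F}))$. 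I expect the bookkeeping that correctly matches the time layers of the Lie splitting in this last step, and the identification of the precise source of the extra regularity, to be the delicate point, and I would follow \cite{MC2016JDE} there.
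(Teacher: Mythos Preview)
Your proposal is correct and aligns with the paper's approach. The paper does not give a proof of this corollary at all: it merely states that ``by means of above statements and the explicit formulas of the normals $\nuFN$, $\tauFN$ and quantities associated with $\ANn$, we can deduce the corresponding strong convergence result,'' and defers entirely to \cite{MC2016JDE}. Your outline is therefore already more detailed than what the paper supplies, and your organizing principle --- each quantity is a smooth function of $(\etaNn,\partial_z\etaNn)$ or of $\nabla\ANn$, so uniform $C^1$ convergence of $\etaNn$ together with elliptic lifting and the lower bound on $\JNN$ suffices --- is exactly the content hidden behind the phrase ``explicit formulas.''

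Your identification of item 3 as the only genuinely non-algebraic step is accurate, and your route via the harmonic extension of $\tvNn$ together with the strong $L^2(0,T;\bL^2(\Gamma))$ convergence from the corollary to Theorem \ref{compactnessuv} is the right one; there is no circularity, since that corollary precedes Corollary \ref{geoconvergence} in the paper's logical order. The interpolation step you sketch is indeed where the bookkeeping lives, and deferring to \cite{MC2016JDE} for the precise higher-regularity input is exactly what the paper itself does.
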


\section{The limiting problem}
	\label{limitproblem}
	In the first part of this section, we construct the suitable test functions that converge to the test funtions in weak formulation. Then, we pass to the limit of approximate problem by means of the weak and strong convergence results we obtained before.

\subsection{Construction of suitable test functions}
	Since the test functions in \eqref{testfunctionspace} for the limiting problem  depend on $ \bbeta $, we are in the position to construct the test functions for limiting problem and for the approximate problem due to the fact that test functions rely on the parameter $ N $.
	
	To eliminate the dependence of test functions on $ N $, we follow the same ideas proposed in \cite{CDEG2005,MC2014JDE}. Our goal is to restrict the space of all test functions $ \QE $ to a dense subset, which is denoted by $ \XE $. Then we construct a sequence of $ \qN $ of test functions such that for every $ \bq \in \XE $, $ \qN \rightarrow \bq $ in suitable norms. This idea has been used in different fluid-structure interaction problems, see e.g., \cite{CGM2020,MC2013ARMA,MC2014JDE,MC2016JDE,TW2018}.
	
	First, for $ n = 0, 1, \cdots, N - 1 $, we define the uniform domain which contains all the approximate domains as
	\begin{equation*}
		\OM = \bigcup_{\Dt > 0, n \in \mathbb{N}}
		\Omega_{F}^{\bbeta_{N}^{n}}.
	\end{equation*}
	Next, we introduce
	\begin{equation*}
		\XM =
		\left\{
			\br \in C_c^1 ([0,T); C^1(\overline{\Omega}_{\rm max})):
			\nabla \cdot \br = 0, \br \cdot \btau = 0, \text{ on } \Gamma_{\rm in/out}, \br \cdot \nuF = 0, \text{ on } \Gamma_{\rm b}
		\right\}
	\end{equation*}
	and
	\begin{equation*}
		\XE =
		\left\{
			(\bq, \bphi, \bpsi):
			\left| \
				\ad{
					& \bq(t, \cdot) = \rv{\br(t, \cdot)}_{\Omega_{F}^{\bbeta}(t)} \circ \bA_{\bbeta} (t),\ \br \in \XM, \\
					& \bpsi|_{\Gamma} = \bphi,\ \left( \rv{\br}_{\Gamma^{\bbeta}} - \bphi \right) \cdot \nuF^{\bbeta} = 0,\\
					& \bphi \in C_c^1 ([0,T); \bH_{0}^2(\Gamma)),\ \bpsi \in C_c^1 ([0,T); \bH_{0}^1(\Omega_{S}))
				}
			\right.
		\right\}.
	\end{equation*}
	It can be easily checked that $ \XE $ is dense in $ \QE $.
	
	Then we define the approximate test functions $ ( \qN, \phiN) $ in $ (n\Dt, (n+1)\Dt] $ as
	\begin{align*}
		& \qN(t, \cdot) = \qN^{n+1} : = \rv{\br ((n+1)\Dt, \cdot)}_{\Omega_F^{\etaNn}(t)} \circ \AN{n+1}(t), \\
		& \phiN(t) = \phiN^{n+1} : = \bphi((n+1)\Dt).
	\end{align*}
	It is clear that $ (\qN(t, \cdot), \phiN(t, \cdot), \bpsi(t, \cdot)) \in \W^{\bbeta}  $ for $ t \in (n\Dt, (n+1)\Dt] $. Fixing $ (\bq, \bphi, \bpsi) \in \XE $ with $ \bq(t, \cdot) = \rv{\br(t, \cdot)}_{\Omega_{F}^{\bbeta}(t)} \circ \bA_{\bbeta} (t) $, $ \br \in \XM $, we obtain the following lemma using the idea from \cite{MC2013ARMA,MC2014JDE,MC2016JDE}.
	
	\begin{lemma}[\cite{MC2016JDE}]\label{testconvergence}
		For every $ (\bq, \bphi, \bpsi) \in \XE $, we have
		\begin{enumerate}
			\item $ (\qN, \phiN) \rightarrow (\bq, \bphi) \text{ in } L^{\infty} (0,T; \bC^1(\overline{\Omega}_{F})) \times L^{\infty} (0,T; \bC^1(\overline{\Gamma})), $
			\item $ d\qN \rightarrow \pt \bq \text{ in } L^{2} (0,T; \bL^2(\Omega_{F})). $
		\end{enumerate}
	\end{lemma}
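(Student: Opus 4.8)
The plan is to reduce Lemma~\ref{testconvergence} to a handful of elementary estimates that combine the smoothness built into $\XM$ with the discrete-geometry convergences of Corollaries~\ref{etaconvergence} and~\ref{geoconvergence}. Fix $(\bq,\bphi,\bpsi)\in\XE$ with $\bq(t,\cdot)=\rv{\br(t,\cdot)}_{\Omega_{F}^{\bbeta}(t)}\circ\bA_{\bbeta}(t)$, $\br\in\XM$. I shall use repeatedly that $\br$ is compactly supported in time and of class $C^{p}$, $p>2$, in space, so that $\br$, $\pz\br$, $\pz^{2}\br$ are bounded and uniformly continuous on $[0,T]\times\overline{\OM}$ and the difference quotients $(\br(n\Dt)-\br((n-1)\Dt))/\Dt$ converge uniformly to $\pt\br$; that $\AN{n}\to\bA_{\bbeta}$ and $\pz\AN{n}\to\pz\bA_{\bbeta}$ uniformly on $[0,T]\times\overline{\Omega}_{F}$ (Corollaries~\ref{etaconvergence} and~\ref{geoconvergence}, the uniform $\bC^{1}(\overline{\Gamma})$-convergence of $\etaNn$ being transferred to $\AN{n}$ through the harmonic extension~\eqref{harmonicA} by elliptic regularity); and that the discrete ALE velocity $\wNn=(\AN{n}-\AN{n-1})/\Dt$ converges to $\bw^{\bbeta}$ in $L^{2}(0,T;\bH^{1}(\Omega_{F}))$.

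\textbf{Item (1).} For the $\bphi$-component there is nothing to do beyond a Lipschitz-in-time bound: since $\bphi$ is Lipschitz in time with values in $\bH^{2}_{0}(\Gamma)$ and $\bH^{2}_{0}(\Gamma)\hookrightarrow\bC^{1}(\overline{\Gamma})$ in one space dimension, on $((n-1)\Dt,n\Dt]$ one has $\norm{\phiN(t)-\bphi(t)}_{\bC^{1}(\overline{\Gamma})}=\norm{\bphi(n\Dt)-\bphi(t)}_{\bC^{1}(\overline{\Gamma})}\le C\Dt$. For $\qN$ I split, on $((n-1)\Dt,n\Dt]$,
$$
\qN^{n}-\bq(t)=\big(\br(n\Dt)-\br(t)\big)\circ\AN{n}+\Big(\br(t)\circ\AN{n}-\br(t)\circ\bA_{\bbeta}(t)\Big).
$$
The first term is $O(\Dt)$ in $\bC^{1}(\overline{\Omega}_{F})$ because $\br$ is $C^{1}$ in time with values in $C^{2}$ and $\pz\AN{n}$ is uniformly bounded (chain rule). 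The second term tends to $0$ in $\bC^{1}(\overline{\Omega}_{F})$ uniformly in $t$: its sup-norm is at most $\norm{\pz\br}_{\infty}\norm{\AN{n}-\bA_{\bbeta}}_{\infty}$, and its gradient $(\pz\br)(\AN{n})\,\pz\AN{n}-(\pz\br)(\bA_{\bbeta})\,\pz\bA_{\bbeta}$ tends to $0$ uniformly by the uniform convergences $\AN{n}\to\bA_{\bbeta}$, $\pz\AN{n}\to\pz\bA_{\bbeta}$ together with the uniform continuity of $\pz\br$. Taking the supremum over $t$ gives~(1).

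\textbf{Item (2).} Write $\pt\bq=(\pt\br)\circ\bA_{\bbeta}+\big((\pz\br)\circ\bA_{\bbeta}\big)\bw^{\bbeta}$ and note that the retarded difference quotient equals, on $((n-1)\Dt,n\Dt]$,
$$
d\qN=\frac{\qN^{n}-\qN^{n-1}}{\Dt}=\frac{\br(n\Dt)-\br((n-1)\Dt)}{\Dt}\circ\AN{n}+\frac{\br((n-1)\Dt)\circ\AN{n}-\br((n-1)\Dt)\circ\AN{n-1}}{\Dt}.
$$
The first summand converges to $(\pt\br)\circ\bA_{\bbeta}$ in $L^{\infty}(0,T;\bL^{2}(\Omega_{F}))$ since the difference quotient of $\br$ converges uniformly to $\pt\br$ and $\AN{n}\to\bA_{\bbeta}$ uniformly. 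For the second summand, the fundamental theorem of calculus gives
$$
\frac{\br((n-1)\Dt)\circ\AN{n}-\br((n-1)\Dt)\circ\AN{n-1}}{\Dt}=\left(\int_{0}^{1}(\pz\br)\big((n-1)\Dt,\ \AN{n-1}+s(\AN{n}-\AN{n-1})\big)\,\rd s\right)\wNn,
$$
whose matrix-valued kernel converges to $(\pz\br)\circ\bA_{\bbeta}$ uniformly (uniform continuity of $\pz\br$, $\AN{n}\to\bA_{\bbeta}$ and $\AN{n-1}\to\bA_{\bbeta}$ uniformly, $|(n-1)\Dt-t|\le\Dt$), while $\wNn\to\bw^{\bbeta}$ strongly and remains bounded in $L^{2}(0,T;\bH^{1}(\Omega_{F}))$; hence the product converges to $\big((\pz\br)\circ\bA_{\bbeta}\big)\bw^{\bbeta}$ in $L^{2}(0,T;\bL^{2}(\Omega_{F}))$. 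Adding the two limits yields~(2).

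The main obstacle — and the reason the profiles in $\XM$ are required to be of class $C^{p}$ with $p>2$ — is the ALE-velocity term in item~(2): it converges only in $L^{2}$, not uniformly, so one must pair a uniformly convergent $C^{0}$ factor (the interpolation kernel above) against the merely $L^{2}(0,T;\bH^{1})$-convergent $\wNn$, and one must verify that $(\pz\br)(\AN{n-1}+s(\AN{n}-\AN{n-1}))$ does converge uniformly, which requires both the uniform continuity of $\pz\br$ (hence $\br\in C^{p}$, $p>2$) and the $\bC^{1}$-convergence of the discrete ALE maps obtained from $\etaNn\to\bbeta$ in $\bC^{1}(\overline{\Gamma})$ via elliptic regularity for~\eqref{harmonicA}. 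Everything else (the $O(\Dt)$ time-lag terms and the chain-rule expansions for the $\bC^{1}$-norm) is routine.
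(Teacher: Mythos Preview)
Your argument is correct and is exactly the natural route: split the test function into a time-lag piece and an ALE-composition piece for item~(1), and for item~(2) decompose the discrete derivative into a time-difference-quotient of $\br$ composed with $\AN{n}$ plus a spatial increment handled by the fundamental theorem of calculus, the latter producing the factor $\wNn$ whose $L^{2}(0,T;\bH^{1})$ convergence (Corollary~\ref{geoconvergence}) drives the $L^{2}$ limit. The paper itself does not give a proof of this lemma---it simply cites \cite{MC2016JDE} (and the closely related arguments in \cite{MC2013ARMA,MC2014JDE})---so your write-up supplies precisely the details the authors omit, and matches the standard proof in those references.
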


	\begin{lemma}[Convergence of gradients]
		\label{gradientscon}
		$ \bm{M} = \bD^{\bbeta} \bu $ and $ \bm{G} = \bbS(\bD^{\bbeta}(\bu)) $, where $ \bu $ and $ \bbeta $ are the weak* limits given by Lemma \ref{weakstarcon}, $ \bm{M} $ and $ \bm{G} $ are the weak limits given by Lemma \ref{weakcon}.
	\end{lemma}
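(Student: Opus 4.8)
The plan is to identify the two weak limits of Lemma~\ref{weakcon} separately. Since $\bm{M}$ is \emph{linear} in $\nabla\uNn$, I would pin it down by combining the strong convergence $\uNn\to\bu$ in $L^{2}(0,T;\bL^{2}(\Omega_{F}))$ with the uniform convergence of the geometric quantities (Theorem~\ref{etacon}, Corollary~\ref{geoconvergence}). The \emph{nonlinear} limit $\bm{G}$ must then be identified by a monotonicity (Minty-type) argument, whose only delicate ingredient is an energy-type $\limsup$ inequality; this is the ``localized Minty's trick'' of Section~\ref{methodology}, and it is where I expect the real work to lie.

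\textbf{Step 1}. First I show $\bm{M}=\bD^{\bbeta}(\bu)$. By \eqref{symmetrized}, for any matrix-valued $\bvphi\in C_{c}^{\infty}((0,T)\times\Omega_{F})$ one can write $\intt\int_{\Omega_{F}}\bD^{\tetaNn}(\uNn):\bvphi=\intt\int_{\Omega_{F}}\bD(\uNn):\bm{\Psi}_{N}$, where $\bm{\Psi}_{N}$ is obtained from $\bvphi$ by multiplication with $(\nabla(\TN\ANn))^{-\tau}$. Since $\DN{n}$ is uniformly bounded (Lemma~\ref{estunif}) and the Jacobians are bounded above and below (Proposition~\ref{existencetime}), $\bD(\uNn)$ is bounded in $\bL^{p}((0,T)\times\Omega_{F})$; because $\uNn\to\bu$ in $L^{2}$ forces $\bD(\uNn)\to\bD(\bu)$ in $\mathcal{D}'$, a bounded sequence in the reflexive space $\bL^{p}$ with a $\mathcal{D}'$-limit must converge weakly to that limit, so $\bD(\uNn)\rightharpoonup\bD(\bu)$ in $\bL^{p}$. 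Meanwhile $\bm{\Psi}_{N}\to\bvphi\,(\nabla\bA_{\bbeta})^{-\tau}$ in $C(\overline{\Omega}_{F})$, hence in $\bL^{q}$ on the compact support of $\bvphi$. Passing to the limit (weak--strong) and undoing the manipulation yields $\intt\int_{\Omega_{F}}\bm{M}:\bvphi=\intt\int_{\Omega_{F}}\bD^{\bbeta}(\bu):\bvphi$ for all such $\bvphi$; by density and uniqueness of the weak $\bL^{p}$-limit, $\bm{M}=\bD^{\bbeta}(\bu)$ a.e.

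\textbf{Step 2}. Next I reduce $\bm{G}=\bbS(\bm{M})$ to an energy inequality. For arbitrary $\bm{Z}\in\bL^{p}((0,T)\times\Omega_{F})$, monotonicity \eqref{Monotonicity} and $\JNN>0$ give
$$0\le\intt\int_{\Omega_{F}}\TN\JNN\left(\bbS(\bD^{\tetaNn}(\uNn))-\bbS(\bm{Z})\right):\left(\bD^{\tetaNn}(\uNn)-\bm{Z}\right).$$
In the expansion the two mixed terms and the $\bbS(\bm{Z}):\bm{Z}$ term pass to the limit by weak--strong convergence, using $\TN\JNN\to\Jeta$ in $L^{\infty}(0,T;C(\overline{\Omega}_{F}))$ (Corollary~\ref{geoconvergence}), Lemma~\ref{weakcon}, and $\bbS(\bm{Z})\in\bL^{q}$ from the growth bound \eqref{Boundedness}. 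Hence, \emph{provided} that
$$(\star)\qquad\limsup_{N\to\infty}\intt\int_{\Omega_{F}}\TN\JNN\,\bbS(\bD^{\tetaNn}(\uNn)):\bD^{\tetaNn}(\uNn)\ \le\ \intt\int_{\Omega_{F}}\Jeta\,\bm{G}:\bm{M},$$
one obtains $0\le\intt\int_{\Omega_{F}}\Jeta\left(\bm{G}-\bbS(\bm{Z})\right):(\bm{M}-\bm{Z})$ for every $\bm{Z}$. Taking $\bm{Z}=\bm{M}\mp\lambda\bm{W}$, dividing by $\lambda>0$, and letting $\lambda\to0^{+}$ (hemicontinuity of $\bbS$, with \eqref{Boundedness} justifying dominated convergence) gives $\intt\int_{\Omega_{F}}\Jeta\left(\bm{G}-\bbS(\bm{M})\right):\bm{W}=0$ for all $\bm{W}$, so $\bm{G}=\bbS(\bm{M})=\bbS(\bD^{\bbeta}(\bu))$ a.e.\ since $\Jeta\ge C>0$.

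\textbf{Step 3}. The heart of the proof is $(\star)$. I would test the discrete fluid problem \eqref{weakf} with $\bq=\un{n+1}$, $\bphi=\vn{n+1}$ and sum over $n$; arguing as in Lemma~\ref{estflu} but keeping the term $2\Dt\sum_{n}\int_{\Omega_{F}}J\,\bbS(\bD^{\tetaNn}(\uNn)):\bD^{\tetaNn}(\uNn)$ intact (rather than bounding it below via \eqref{Coercivity}) turns the discrete relations into an energy \emph{identity}, expressing that term through the final-time kinetic energy, the interface term $\tfrac{1}{\alpha}\int_{\Gamma}\abs{(\uNn-\vNn)_{\tauFN}}^{2}J_{F,N}$, the nonnegative ``numerical dissipation'' squares, and the data term $\Dt\sum_{n}\inner{\bR^{n+1}}{\un{n+1}}$. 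In the $\limsup$ of this identity, the final-time kinetic energy and the numerical-dissipation squares carry the sign for which weak lower semicontinuity works in our favour (e.g.\ $\liminf_{N}\int_{\Omega_{F}}\JNN\abs{\uNn}^{2}\ge\int_{\Omega_{F}}\Jeta\abs{\bu}^{2}$ at the final time, and similarly for $\vNn,\VNn$); the interface term converges \emph{strongly}, using $\vNn\to\bv$ in $L^{2}(0,T;\bL^{2}(\Gamma))$, the $L^{2}(0,T;\bL^{2}(\Gamma))$ trace convergence of $\uNn$ (obtained by interpolating the $L^{p}(0,T;\bW^{1,p})$ bound against the $L^{2}(0,T;\bL^{2})$ convergence), and Corollary~\ref{geoconvergence}; and the data term converges by Lemma~\ref{weakstarcon}. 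This yields $\limsup_{N}X_{N}\le\Phi$ with $\Phi$ the limiting right-hand side written in terms of $(\bu,\bv,\bbeta,\bd)$. On the other hand, passing to the limit in \eqref{weakf} against the admissible functions $(\qN,\phiN,\bpsi)$ of Lemma~\ref{testconvergence} (and invoking Step~1 for the $\bbS$-term) produces the limiting weak identity with $\bm{G}$ in place of $\bbS(\bD^{\bbeta}(\bu))$; a regularization-in-time (Steklov averaging, to handle the $\pt\bq$ term) together with density in $\V_{F}^{\bbeta}$ then allows $(\bu,\bv,\bd)$ itself to be used as a test function on $(0,t)$, giving $2\intt\int_{\Omega_{F}}\Jeta\,\bm{G}:\bm{M}=\Phi$ (here the trilinear convective contributions reduce, as usual, to $-\tfrac12\intt\int_{\Omega_{F}}\Jeta(\nabla^{\bbeta}\cdot\bw^{\bbeta})\abs{\bu}^{2}$, controlled by the $L^{4}$-bound on $\bu$ in 2D). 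Comparing the two identities gives $(\star)$, and then Step~2 closes the argument. I expect this last point --- justifying that $\bu$ is an admissible test function in the limit equation on the moving domain --- to be the principal technical difficulty; the remaining ingredients are the weak--strong bookkeeping already supplied by Lemmas~\ref{weakstarcon}--\ref{weakcon}, Theorems~\ref{compactnessuv}--\ref{etacon} and Corollary~\ref{geoconvergence}.
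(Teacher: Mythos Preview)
Your outline is correct, and it differs from the paper's proof in two ways. First, you work entirely on the fixed reference domain $\Omega_{F}$, while the paper transfers to the physical maximal domain $\OM$, introduces characteristic functions $\chi^{N},\chi$ of the moving domains, and invokes a \emph{localized} Minty trick (Proposition~\ref{localMintytrick}, taken from Wolf~\cite{Wolf2007}) with localizers $\zeta_{N}=\chi^{N}$; the physical-domain formulation makes the gradients Euclidean, so the distributional identification of $\bm{M}$ is slightly cleaner there, whereas your reference-domain formulation keeps the Jacobian weight and the ALE structure exactly as in the discrete scheme, which is convenient for your Step~3. Second, and more substantively, for the key energy condition (your $(\star)$, the paper's \eqref{bseq}) the paper attempts a direct three-term splitting and appeals to the weak limits already in hand, whereas you propose the standard route: extract an energy \emph{identity} from the discrete scheme, take the $\limsup$ using weak lower semicontinuity of the final-time kinetic energies and strong convergence elsewhere, and compare with the limiting weak formulation tested against the solution itself. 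Your route is the more robust one here: in the paper's splitting, the term $\int_{Q}\bigl(\bbS(\bD(\bu^{N}))-\TG\bigr):\bD(\bu^{N})\,\zeta_{N}$ is a weak-times-weak product and does not obviously vanish, so the verification of \eqref{bseq} as written is at best incomplete --- which is precisely why an energy argument of the kind you sketch is usually needed. The difficulty you flag at the end, namely justifying $(\bu,\bv,\bd)$ as a test function in the limit equation, is genuine and standard for Minty arguments in moving-domain fluid-structure problems; the paper does not address it either.
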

	\begin{proof}
		As in \cite{CGM2020,MC2014JDE}, it is helpful to map the approximate fluid velocities and the limiting fluid velocity onto the physical domains. For that purpose, we introduce the following functions
		\begin{align*}
			& \bu^N(t, \cdot) = \uNn(t, \cdot) \circ \bA^{-1}_{\tetaNn}(t), && \tu(t, \cdot) = \bu(t, \cdot) \circ \bA_{\bbeta}^{-1}(t), \\
			& \chi^N \bm{g}(t, \bm{x}) =
			\left\{
				\ad{
					& \bm{g}, && \bm{x} \in \Omega_{F}^{\tetaNn}(t), \\
					& 0, && \bm{x} \notin \Omega_{F}^{\tetaNn}(t),
				}
			\right.
			&& \chi \bm{g}(t, \bm{x}) =
			\left\{
				\ad{
					& \bm{g}, && \bm{x} \in \Omega_{F}^{\bbeta}(t), \\
					& 0, && \bm{x} \notin \Omega_{F}^{\bbeta}(t),
				}
			\right.
		\end{align*}
		where $ \bA_{\tetaNn} $ is the ALE mapping that maps the reference domain $ \Omega_{F} $ into $ \Omega_{F}^{\tetaNn}(t) $, $ \bA_{\bbeta} $ is the ALE mapping defined in Section \ref{ALE}, $ \tetaNn $ is the time shift displacement defined in Section \ref{uniformboundedness} and $ \bbeta $ is the weak* limit in Lemma \ref{weakstarcon}. Due to the strong convergence of $ \etaNn $ and $ \tetaNn $ (that is $ \TN \etaNn$) in Corollary \ref{etaconvergence}, it is easy to find that as $ N \rightarrow \infty $,
		\begin{align}
			& \meas \left\{ \Omega_{F}^{\etaNn} \triangle \Omega_{F}^{\bbeta} \right\} \rightarrow 0, \label{measurezero}\\
			& \meas \left\{ \Omega_{F}^{\tetaNn} \triangle \Omega_{F}^{\bbeta} \right\} \rightarrow 0, \label{measurezerot}
		\end{align}
		where $ \triangle $ denote the symmetric difference of two sets, i.e., $ A \triangle B = (A \backslash B) \cup (B \backslash A) $ for two sets $ A $ and $ B $. According to the property of sets, we obtain from \eqref{measurezero} and \eqref{measurezerot} that
		\begin{align*}
			\meas \left\{ \Omega_{F}^{\tetaNn} \triangle \Omega_{F}^{\etaNn} \right\}
			\leq \meas \left\{ \Omega_{F}^{\etaNn} \triangle \Omega_{F}^{\bbeta} \right\}
			+ \meas \left\{\Omega_{F}^{\tetaNn} \triangle \Omega_{F}^{\bbeta} \right\}
			\rightarrow 0.
		\end{align*}
		Hence, $ \bA_{\tetaNn} = \ANn $ in the sense of limit and it follows from Corollary \ref{geoconvergence} that
		\begin{align} \label{detconverg}
			\frac{1}{\det \nabla \bA_{\tetaNn}} \rightarrow \frac{1}{\det \nabla \bA_{\bbeta}} \text{ strongly in } L^{\infty}(0,T; C(\overline{\Omega}_F)).
		\end{align}
		
		We follow the ideas in \cite[Proposition 7.6]{MC2014JDE} and \cite[Section 10.2]{CGM2020}, and provide a sketch of proof in the following three steps:
		
		\textbf{Step 1.} The strong convergence
		\begin{equation*}
			\chi^N \bu^N \rightarrow \chi \tu, \quad \text{ strongly in } L^2((0,T) \times \OM),
		\end{equation*}
		can be easily checked from \cite[Proposition 7.6]{MC2014JDE}, so we omit it here.
		
		\textbf{Step 2.} We need to prove
		\begin{equation*}
			\ad{
			& \chi^N \bD (\bu^N) \rightharpoonup \chi \bD (\tu), && \text{ weakly in } L^p((0,T) \times \OM), \\
			& \chi^N \bbS ( \bD (\bu^N) \left( \nabla \right) ) \rightharpoonup \chi \bbS ( \bD (\tu) ), && \text{ weakly in } L^q((0,T) \times \OM),
			}
		\end{equation*}
		where $ \bD (\bu^N) = \bD^{\tetaNn} (\uNn) $. From Lemma \ref{boundDU}, there exist $ \TM $ and $ \TG $, such that $ \chi^N \bD ( \bu^N ) \rightharpoonup \TM $ weakly in $ L^p((0,T) \times \OM)^2 $ and $ \chi^N \bbS ( \bD (\bu^N) ) \rightharpoonup \TG $ weakly in $ L^q((0,T) \times \OM)^2 $, i.e.,
		\begin{align*}
			& \intt \int_{\OM} \TM \cdot \by = \lim_{N \rightarrow \infty} \intt \int_{\OM} \chi^N \bD( \bu^N ) \cdot \by, & \by \in C_c^{\infty} ((0,T) \times \OM), \\
			& \intt \int_{\OM} \TG \cdot \by = \lim_{N \rightarrow \infty} \intt \int_{\OM} \chi^N \bbS (\bD( \bu^N )) \cdot \by, & \by \in C_c^{\infty} ((0,T) \times \OM).
		\end{align*}
		In order to obtain $ \TM = \chi \bD (\tu) $ and $ \TG = \chi \bbS (\bD (\tu)) $, we divide $ \OM $ into $ \Omega_{F}^{\bbeta} (t) $ and $ \OM \backslash \Omega_{F}^{\bbeta} (t) $. Taking the test function $ \by $ such that $ \by $ is supported in $ (0,T) \times \OM \backslash \Omega_{F}^{\bbeta} (t) $ and combining the uniform convergence of $ \tetaNn $ which derives \eqref{measurezerot}, we find that $ \TM = \TG = 0 $ in $ (0,T) \times (\OM \backslash \Omega_{F}^{\bbeta} (t)) $.
		
		Next, taking a test funtion $ \bz $ such that $ \text{supp} \bz \subset ((0,T) \times \Omega_{F}^{\bbeta} (t)) $ and combining the uniform convergence of $ \tetaNn = \TN \etaNn $, we get
		\begin{align*}
			& \intt \int_{\OM} \TM \cdot \bz = \lim_{N \rightarrow \infty} \intt \int_{\OM} \chi^N \bD( \bu^N ) \cdot \bz = \lim_{N \rightarrow \infty} \intt \int_{\Omega_{F}^{\bbeta}(t)} \bD( \bu^N ) \cdot \bz, \\
			& \intt \int_{\OM} \TG \cdot \bz = \lim_{N \rightarrow \infty} \intt \int_{\OM} \chi^N \bbS (\bD( \bu^N )) \cdot \bz = \lim_{N \rightarrow \infty} \intt \int_{\Omega_{F}^{\bbeta}(t)} \bbS (\bD( \bu^N )) \cdot \bz.
		\end{align*}
		Since the strong convergence $ \chi^N \bu^N \rightarrow \chi \tu $ in $ L^2((0,T) \times \OM) $ holds as shown in \textbf{Step 1}, we find that in the set $ \text{supp} \bz $, both $ \bu^N \rightarrow \tu $ and $ \bD (\bu^N) \rightarrow \bD (\tu) $ hold in the sense of distributions. Consequently, we have
		\begin{equation*}
			\ad{
				& \intt \int_{\OM} \TM \cdot \bz = \lim_{N \rightarrow \infty} \intt \int_{\Omega_{F}^{\bbeta}(t)} \bD( \bu^N ) \cdot \bz = \intt \int_{\Omega_{F}^{\bbeta}(t)} \bD (\tu) \cdot \bz,
			}
		\end{equation*}
		for all the test functions $ \bz $ supported in $ (0,T) \times \Omega_{F}^{\bbeta} (t) $. Due to the uniqueness of the limit, we obtain
		\begin{align*}
			& \TM = \bD (\tu) \text{ a.e. in } (0,T) \times \Omega_{F}^{\bbeta} (t).
		\end{align*}
		
		However, since the viscosity is nonlinear, if $ \bbS (\bD (\bu^N)) \rightarrow \bbS (\bD (\tu)) $ in the sense of distributions is a problem and thus, we can not proceed like $ \TM = \bD (\tu) $. We notice the $ p- $Laplacian structure of $ \bbS $ and combine the monotone operator theory to overcome this difficulty. More specifically, we use the ``Minty's trick'' to obtain the value of $ \TG $. There is still one problem that $ \TG $ is defined on moving domains $ \Omega_{F}^{\bbeta}(t) $, the gradients of velocities depend on the displacement $ \bbeta $. In this work, we introduce a localized Minty's Trick (see Proposition \ref{localMintytrick}), which contains of a cutoff function that can transfer the moving domain $ \Omega_{F}^{\bbeta}(t) $ to a fixing domain $ \OM $.
		
		Let $ \Omega = \OM $, $ \bu_m = \bu^N $, $ \bu = \tu $ and $ \tilde{\bbS} = \TG $ in Proposition \ref{localMintytrick}, then $ Q = (0,T) \times \OM $. We define $ \zeta_N (t, \bm{x}) $, $ \zeta(t, \bm{x}) $ as
		\begin{align*}
			& \zeta_N (t, \bm{x}) =
			\left\{
				\ad{
					& 1, && \bm{x} \in \Omega_{F}^{\etaNn}(t), \\
					& 0, &&\bm{x} \in \OM \backslash \Omega_{F}^{\etaNn}(t),
				}
			\right.
			&& \zeta (t, \bm{x}) =
			\left\{
			\ad{
				& 1, && \bm{x} \in \Omega_{F}^{\bbeta}(t), \\
				& 0, &&\bm{x} \in \OM \backslash \Omega_{F}^{\bbeta}(t).
			}
			\right.
		\end{align*}
		It is easy to check $ \zeta_N \rightarrow \zeta $ a.e. in $ Q $ as $ N \rightarrow \infty $, which means that \eqref{zetam} and \eqref{zetalim} are satisfied. Also, we have $ \bD(\uNn) \rightharpoonup \bD(\tu) $ in $ \bL^p((0,T) \times \OM)^2 $ and $ \bbS (\bD(\uNn)) \rightharpoonup \TG $ in $ L^q((0,T) \times \OM)^2 $ as obtained earlier. To verify \eqref{bseq} in Proposition \ref{localMintytrick}, we carry out a direct calculation to find
		\begin{align*}
			& \quad \abs{\int_{Q} \bbS (\bD (\bu^N)) : \bD(\bu^N) \zeta_N - \int_{Q} \TG : \bD(\tu) \zeta} \\
			& \leq \abs{\int_{Q} \left(  \bbS (\bD (\bu^N)) - \TG \right) : \bD (\bu^N) \zeta_N} + \abs{\int_{Q} \TG : \left( \bD (\bu^N) - \bD (\tu) \right) \zeta_N} \\
			& \quad + \abs{\int_{Q} \TG : \bD (\tu) \left( \zeta_N - \zeta \right)}.
		\end{align*}
		By the convergences of $ \bbS(\bD(\bu^N)) $, $ \bD(\bu^N) $ and $ \zeta_N $, we have \eqref{bseq}. Then from Proposition \ref{localMintytrick}, we achieve $ \TG \zeta = \bbS (\bD(\tu)) \zeta $ a.e. in $ Q $, which means
		$$
			\TG = \bbS (\bD(\tu)) \text{ a.e. in } (0,T) \times \Omega_{F}^{\bbeta}(t).
		$$
		
		\textbf{Step 3.} Finally, we are in the position to show that
		\begin{align*}
			& \intt \int_{\Omega_{F}} \bm{M} : \bq = \intt \int_{\Omega_{F}} \bD^{\bbeta}(\bu) : \bq, \\
			& \intt \int_{\Omega_{F}} \bm{G} : \bq = \intt \int_{\Omega_{F}} \bbS(\bD^{\bbeta}(\bu)) : \bq,
		\end{align*}
		for every test function $ (\bq, 0, 0) \in \XE $. It follows from the results of \textbf{Step 2}, the uniform boundedness and convergence of gradients $ \bD^{\tetaNn} (\tun) $ and $ \bbS( \bD^{\tetaNn} (\tun) ) $ provided by Lemma \ref{boundDU} and \ref{weakcon}, the strong convergence of $ \left( \det \nabla \bA_{\tetaNn} \right)^{-1} $ given in \eqref{detconverg} and the strong convergence of the test functions $ \qN \rightarrow \bq $ obtained in Lemma \ref{testconvergence} that
		\begin{align*}
			\intt \int_{\Omega_{F}} \bm{M} : \bq
			& = \lim_{N \rightarrow \infty} \intt \int_{\Omega_{F}} \bD^{\tetaNn} (\uNn) : \qN \\
			& = \lim_{N \rightarrow \infty} \intt \int_{\Omega_{F}} \bD (\uNn \circ \bA_{\tetaNn}^{-1}) : \left( \br \circ \ANn \right) \rd \Omega_{F} \\
			& = \lim_{N \rightarrow \infty} \intt \int_{\Omega_{F}^{\tetaNn}} \bD (\bu^N) : \br \rd \Omega_{F}^{\tetaNn} \cdot \frac{1}{\det \nabla \bA_{\tetaNn}} \\
			& = \lim_{N \rightarrow \infty} \intt \int_{\OM} \frac{1}{\det \nabla \bA_{\tetaNn}} \chi^N \bD (\bu^N) : \br \rd \OM \\
			& = \intt \int_{\Omega_{F}^{\bbeta}} \frac{1}{\det \nabla \bA_{\bbeta}} \bD (\tu) :\br \rd \Omega_{F}^{\bbeta} \\
			& = \intt \int_{\Omega_{F}} \bD^{\bbeta} (\bu) : \bq,
		\end{align*}
		\begin{align*}
			\intt \int_{\Omega_{F}} \bm{G} : \bq
			& = \lim_{N \rightarrow \infty} \intt \int_{\Omega_{F}} \bbS (\bD^{\tetaNn} (\uNn)) : \qN \\
			& = \lim_{N \rightarrow \infty} \intt \int_{\Omega_{F}} \bbS (\bD (\uNn \circ \bA_{\tetaNn}^{-1})) : \left( \br \circ \ANn \right) \rd \Omega_{F} \\
			& = \lim_{N \rightarrow \infty} \intt \int_{\Omega_{F}^{\tetaNn}} \bbS (\bD (\bu^N)) : \br \rd \Omega_{F}^{\tetaNn} \cdot \frac{1}{\det \nabla \bA_{\tetaNn}} \\
			& = \lim_{N \rightarrow \infty} \intt \int_{\OM} \frac{1}{\det \nabla \bA_{\tetaNn}} \chi^N \bbS (\bD (\bu^N)) : \br \rd \OM \\
			& = \intt \int_{\Omega_{F}^{\bbeta}} \frac{1}{\det \nabla \bA_{\bbeta}} \bbS (\bD (\tu)) :\br \rd \Omega_{F}^{\bbeta} \\
			& = \intt \int_{\Omega_{F}} \bbS (\bD^{\bbeta} (\bu)) : \bq,
		\end{align*}
		where we used from \eqref{gradequiv} that $ \bD (\bu^N) = \bD^{\tetaNn} (\uNn) $ and $ \bD (\tu) = \bD^{\bbeta} (\bu) $. This completes the proof.
	\end{proof}
	
	We introduce the localized Minty's Trick here from \cite[Appendix A]{Wolf2007}.	
	\begin{proposition}[Localized Minty's Trick {\cite[Appendix A]{Wolf2007}}]\label{localMintytrick}
		Let $ \bu_m \in L^p(0,T; \bW^{1,p}(\Omega)) $ and $ \zeta_m \in L^{\infty}(Q) $ with $ Q = (0,T) \times \Omega $. If for $ a_0 = \text{const} > 0 $,
		\begin{align}
		& 0 \leq \zeta_m \leq a_0 \text{ a.e. in } Q, \quad m \in \mathbb{N}, \label{zetam}\\
		& \bD(\bu_m) \rightharpoonup \bD(\bu) \text{ weakly in } \bL^{p}(Q)^2, \label{dum}\\
		& \bbS ( \bD(\bu_m) ) \rightharpoonup \tilde{\bbS} \text{ weakly in } \bL^{q}(Q)^2, \label{bsm}\\
		& \zeta_m \rightarrow \zeta \text{ a.e. in } Q \text{ as } m \rightarrow \infty, \label{zetalim}\\
		& \limsup_{m \rightarrow \infty} \int_Q \bbS(\bD(\bu_m)) : \bD(\bu_m) \zeta_m = \int_Q \tilde{\bbS} : \bD(\bu) \zeta, \label{bseq}
		\end{align}
		Then
		\begin{equation}\label{tildeS}
		\tilde{\bbS} \zeta = \bbS(\bD(\bu)) \zeta \text{ a.e. in } Q.
		\end{equation}
	\end{proposition}
	
	By the analogous argument above, it is easy to deduce the following corollary.
	\begin{corollary}[\cite{MC2014JDE}]
		For every $ (\bq, \bphi, \bpsi) \in \XE $, we have
		\begin{itemize}
			\item[] $ \bD^{\etaNn} (\qN) \rightarrow \bD (\bq) \text{ in } L^{p} (0,T; \bL^p(\Omega_{F}))^2. $
		\end{itemize}
	\end{corollary}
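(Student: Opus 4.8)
The plan is to exploit the \emph{smoothness} of the test functions to bypass the monotonicity machinery needed in Lemma~\ref{gradientscon}. If $(\bq,\bphi,\bpsi)\in\XE$ with $\bq(t,\cdot)=\rv{\br(t,\cdot)}_{\Omega_{F}^{\bbeta}(t)}\circ\bA_{\bbeta}(t)$, then $\br\in\XM$ is $C^{1}$ in space, continuous and compactly supported in time, and the approximate test function is $\qN(t,\cdot)=\rv{\br(n\Dt,\cdot)}_{\Omega^{\etaNn}(t)}\circ\AN{n}(t)$ on $((n-1)\Dt,n\Dt]$. Since all ingredients converge \emph{strongly}, no Minty-type argument is needed; I would in fact establish the stronger uniform statement $\bD^{\etaNn}(\qN)\to\bD^{\bbeta}(\bq)$ in $L^{\infty}(0,T;\bC(\overline{\Omega}_{F}))$ (here $\bD^{\bbeta}(\bq)$, the symmetrized gradient on the moving domain, is what the statement abbreviates as $\bD(\bq)$), from which the asserted $L^{p}(0,T;\bL^{p}(\Omega_{F}))$ convergence is immediate since $(0,T)\times\Omega_{F}$ is bounded and $p<\infty$.

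The first step is the transformation identity~\eqref{symmetrized}: on $((n-1)\Dt,n\Dt]$ it gives $\bD^{\etaNn}(\qN)=\bD(\qN)/\nabla\AN{n}$, and on the moving domain $\bD^{\bbeta}(\bq)=\bD(\bq)/\nabla\bA_{\bbeta}$. Lemma~\ref{testconvergence}(1) provides $\qN\to\bq$ in $L^{\infty}(0,T;\bC^{1}(\overline{\Omega}_{F}))$, hence $\bD(\qN)\to\bD(\bq)$ uniformly on $[0,T]\times\overline{\Omega}_{F}$; Corollary~\ref{geoconvergence}(7) provides $1/\nabla\ANn\to1/\nabla\bA_{\bbeta}$ uniformly on the same set, with both factors uniformly bounded (the denominator bounded away from $0$ by Proposition~\ref{existencetime}). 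A product of uniformly bounded, uniformly convergent sequences converges uniformly, so $\bD^{\etaNn}(\qN)\to\bD^{\bbeta}(\bq)$ uniformly, and integrating over $(0,T)\times\Omega_{F}$ closes the argument.

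An equivalent route, closer to the structure of the proof of Lemma~\ref{gradientscon}, works with compositions: $\bD^{\etaNn}(\qN)(t,\cdot)=(\bD\br)(n\Dt,\cdot)\circ\AN{n}(t)$ while $\bD^{\bbeta}(\bq)(t,\cdot)=(\bD\br)(t,\cdot)\circ\bA_{\bbeta}(t)$, so the difference is bounded by the modulus of continuity of $\bD\br$ on the compact set $[0,T]\times\overline{\Omega}_{\rm max}$, applied to the time gap $|n\Dt-t|\le\Dt=T/N$ and to $\|\AN{n}(t)-\bA_{\bbeta}(t)\|_{C(\overline{\Omega}_{F})}$; the latter $\to0$ uniformly in $t$ by the $\bC^{1}$-convergence $\etaNn\to\bbeta$ (Corollary~\ref{etaconvergence}) and continuous dependence of the harmonic extension~\eqref{harmonicA} on its boundary datum.

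I do not expect a genuine obstacle: the argument is essentially bookkeeping. The points that need a little care are (i) invoking the correct item of Corollary~\ref{geoconvergence} (item~(7) for $1/\nabla\ANn$, and item~(6) for $\TN\JNN$ if the $\tetaNn$-version of the test function is used) at the correct discrete time-level; (ii) noting that the piecewise-constant-in-time construction of $\qN$ is already built into Lemma~\ref{testconvergence}(1), so no separate time-regularity estimate is needed; and (iii) keeping $p<\infty$ so that the uniform bound upgrades to an $L^{p}$ bound on the bounded space--time cylinder. The same chain of estimates, together with the continuity of $\bbS$ and the growth bound~\eqref{Boundedness}, also yields $\bbS(\bD^{\etaNn}(\qN))\to\bbS(\bD^{\bbeta}(\bq))$ in $L^{q}(0,T;\bL^{q}(\Omega_{F}))$, which is convenient when passing to the limit in the viscous term.
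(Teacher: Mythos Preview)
Your proposal is correct and aligns with what the paper intends. The paper itself offers no detailed proof, only the sentence ``By the analogous argument above, it is easy to deduce the following corollary,'' pointing back to Lemma~\ref{gradientscon}; your argument makes explicit that for smooth test functions the relevant ingredients are precisely the uniform convergences of Lemma~\ref{testconvergence}(1) and Corollary~\ref{geoconvergence}(7) together with the identity~\eqref{symmetrized}, which is exactly the content of Step~3 in that lemma's proof (the weak-convergence and Minty machinery of Steps~1--2 being unnecessary here, as you observe).
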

	
\subsection{Pass to the limit}
	To get the weak formulation of the coupled problem, setting $ (\phiN, \bpsi) $ as the test functions in \eqref{weaks} and integrating it over $ (n\Dt, (n+1)\Dt) $, taking $ (\qN,\phiN) $ as the test functions in \eqref{weakf}, multiplying $ \frac{1}{\Dt} $, again integrating it over $ (n\Dt, (n+1)\Dt) $, and adding the two equations together, we have
	\begin{align}
		& \quad \inttt \int_{\Omega_{F}} \JNn \frac{\un{n+1} - \un{n}}{\Dt} \cdot \qN
		+ \onehalf \inttt \int_{\Omega_{F}} \frac{\JN{n+1} - \JN{n}}{\Dt} \un{n+1} \cdot \qN \nonumber \\
		& + \onehalf \inttt \int_{\Omega_{F}} \JN{n} \biggl( \left( \left( \un{n} - \wn{n+1} \right) \cdot \nabla^{n+1} \right) \un{n+1} \cdot \qN \bigg. \nonumber \\
		& \bigg. \qquad \qquad \qquad \qquad \qquad \qquad \quad - \left( \left( \un{n} - \wn{n+1} \right) \cdot \nabla^{n+1} \right) \qN \cdot \un{n+1} \biggr) \nonumber \\
		& + 2 \inttt \int_{\Omega_{F}} \JN{n} \bbS(\bD(\un{n+1})) : \bD(\qN) \label{approximatedt} \\
		& + \frac{1}{\alpha} \inttt \int_{\Gamma} \left( \bu_{N,\tauFN^{n+1}}^{n+1} - \bv_{N,\tauFN^{n+1}}^{n+1} \right) \left( \bq_{N, \tauFN^{n+1}} - \bphi_{N, \tauFN^{n+1}} \right) J_{F,N}^{n+1} \nonumber \\
		& + \inttt \int_{\Gamma} \frac{ \vn{n+1} - \vn{n} }{\Dt} \cdot \phiN + \inttt \int_{\Omega_{S}} \frac{ \VN{n+1} - \VN{n} }{\Dt} \cdot \bpsi \nonumber \\
		& + \inttt \inner{\LE \etan{n+1}}{\phiN} + \inttt \as{\dNn^{n+1}}{\psiN}
		+ \inttt \inner{f(\etan{n+1})}{\phiN} \nonumber \\
		& = \inttt \int_{\Gamma_{\rm in/out}} P_{\rm in/out}^{n} \qN \cdot \nuF. \nonumber
	\end{align}
	Summing \eqref{approximatedt} from $ n = 0, 1 , \dots, N - 1 $, we obtain the weak formulation of approximate problem over $ (0,T) $ as
	\begin{align}
		& \quad \intt \int_{\Omega_{F}} \TN \JNN \pt \stun \cdot \qN
		+ \onehalf \intt \int_{\Omega_{F}} \frac{\JNN - \TN \JNN}{\Dt} \uNn \cdot \qN \nonumber\\
		& + \onehalf \intt \int_{\Omega_{F}} \JNN \biggl( \left( \left( \TN \uNn - \wNn \right) \cdot \nabla^{\etaNn} \right) \uNn \cdot \qN \bigg. \nonumber\\
		& \bigg. \qquad \qquad \qquad \qquad \quad - \left( \left( \TN \uNn - \wNn \right) \cdot \nabla^{\etaNn} \right) \qN \cdot \uNn \biggr) \nonumber\\
		& + 2 \intt \int_{\Omega_{F}} \JNN \bbS(\bD^{\etaNn}(\uNn)) : \bD^{\etaNn}(\qN) \label{approximatepro}\\
		& + \frac{1}{\alpha} \intt \int_{\Gamma} \left( \bu_{N,\tauFN} - \bv_{N,\tauFN} \right) \left( \bq_{N, \tauFN} - \bphi_{N, \tauFN} \right) J_{F,N} \nonumber\\
		& + \intt \int_{\Gamma} \pt \tvn \cdot \phiN + \intt \int_{\Omega_{S}} \pt \tVn \cdot \bpsi \nonumber\\
		& + \intt \inner{\LE \etaNn}{\phiN} + \intt \as{\dNn}{\bpsi}
		+ \intt \inner{f(\etaNn)}{\phiN} \nonumber\\
		& = \intt \int_{\Gamma_{\rm in/out}} P_{\rm in/out} \qN \cdot \nuF, \nonumber
	\end{align}
	where $ \stun $, $ \tvn $ and $ \tVn $ are the piece-wise linear approximations of $ \uNn $, $ \vNn $ and $ \VNn $, that is for $ t \in (n\Dt, (n+1)\Dt] $,
	\begin{align*}
		& \stun (t) = \un{n} + \frac{\un{n+1} - \un{n}}{\Dt} (t - \Dt), \\
		& \tvn (t) = \vn{n} + \frac{\vn{n+1} - \vn{n}}{\Dt} (t - \Dt), \\
		& \tVn (t) = \VN{n} + \frac{\VN{n+1} - \VN{n}}{\Dt} (t - \Dt).
	\end{align*}
	
	In the sequel, we will take the limit $ N \rightarrow \infty $, which means $ \Dt = \frac{T}{N} \rightarrow 0 $. Here, we denote \eqref{approximatepro} as $ \sum_{i = 1}^{10} I_{i} = I_{11} $ and pass to the limit for each term.
	
	\begin{enumerate}
		\item $ I_1 $ and $ I_2 $: Using integration by parts and the convergence results earlier, we obtain
		\begin{align*}
			& \intt \int_{\Omega_{F}} \TN \JNN \pt \stun \cdot \qN + \intt \int_{\Omega_{F}} \frac{\JNN - \TN \JNN}{\Dt} \uNn \cdot \qN \\
			& \rightarrow - \intt \int_{\Omega_{F}} \Jeta \bu \cdot \pt \bq - \intt \int_{\Omega_{F}} \Jeta \left( \nabla^{\bbeta} \cdot \bw^{\bbeta} \right) \bu \cdot \bq - \int_{\Omega_{F}} J_0 \bu_{0} \bq (0).
		\end{align*}
		More details can be found in \cite[Proposition 8]{MC2016JDE}.
		\item $ I_3 $: We need to show the convergence of each term in $ I_3 $. First, the convergence of $ \TN \JNN $, $ \uNn $, $ \wNn $, $ \qN $, $ \nabla^{\tetaNn} \uNn $ and $ \nabla^{\tetaNn} \qN $ can be derived directly from the previous Lemmas. From the estimate 3 in Lemma \ref{estunif}, we have
		\begin{equation*}
			\norm{\TN \uNn - \uNn}_{L^2(0,T; L^{2}(\Omega))}^2 \leq C \sum_{n = 1}^{N} \norm{\un{n+1} - \un{n}}_{L^{2}(\Omega)}^2 \Dt \leq C \Dt,
		\end{equation*}
		which means $ \TN \uNn \rightarrow \bu $ in $ L^2(0,T; L^{2}(\Omega)) $. Therefore, we obtain the convergence of $ I_3 $.
		\item $ I_4 $: From the convergence of $ \JNN $, $ \bbS (\bD^{\etaNn}(\uNn)) $ and $ \bD^{\etaNn} (\qN) $ in some appropriate function spaces, we get
		\begin{align*}
			& \quad \intt \int_{\Omega_{F}} \JNN \bbS (\bD^{\etaNn}(\uNn)) : \bD^{\etaNn} (\qN)
			- \intt \int_{\Omega_{F}} \Jeta  \bbS (\bD^{\bbeta}(\bu)) : \bD^{\bbeta} (\bq) \\
			& = \intt \int_{\Omega_{F}} \JNN \left( \bbS (\bD^{\etaNn}(\uNn)) - \bbS (\bD (\bu)) \right) : \bD^{\etaNn} (\qN) \\
			& \quad + \intt \int_{\Omega_{F}} \left( \JNN - \Jeta \right) \bbS (\bD (\bu)) : \bD^{\etaNn} (\qN) \\
			& \quad + \intt \int_{\Omega_{F}} \Jeta \bbS (\bD (\bu)) : \left( \bD^{\etaNn} (\qN) - \bD (\bq) \right) \\
			& \rightarrow 0, \text{ as } N \rightarrow \infty.
		\end{align*}
		\item $ I_5 $: It is easy to get the convergence of $ I_5 $ by using the convergence of $ \uNn $, $ \vNn $ and the geometric quantities in Corollary \ref{geoconvergence}.
		\item $ I_6 $ and $ I_7 $: The convergence of $ \vsNn $ and $ \VsNn $ and integration by parts imply
		\begin{align*}
			& \intt \int_{\Gamma} \pt \vsNn  \cdot \phiN \rightarrow - \intt \int_{\Gamma} \bv \cdot \pt \bphi - \int_{\Gamma} \bv_{0} \cdot \bphi(0), \\
			& \intt \int_{\Omega_{S}} \pt \VsNn \cdot \psiN \rightarrow - \intt \int_{\Omega_{S}} \bV \cdot \pt \bpsi - \int_{\Omega_{S}} \bV_{0} \cdot \bpsi(0).
		\end{align*}
		\item $ I_8 $ and $ I_9 $: From the convergence of $ \etaNn $, $ \dNn $ and $ \phiN $ in some proper spaces, we can deduce the convergence of $ I_9 $ and $ I_{10} $ directly.
		\item $ I_{10} $: Since $ f $ is locally Lipschitz from $ \bH^{2-\epsilon} $ to $ H^{-2} $, it follows from the convergence of $ \etaNn $ and $ \phiN $ that
		\begin{align*}
			& \quad \intt \inner{f(\etaNn)}{\phiN} - \intt \inner{f(\bbeta)}{\bphi} \\
			& = \intt \inner{f(\etaNn) - f(\bbeta)}{\phiN} + \intt \inner{f(\bbeta)}{\phiN - \bphi} \\
			& \leq \intt \norm{f(\etaNn) - f(\bbeta)}_{H^{2} (\Gamma)} \norm{\phiN}_{\bH^{-2} (\Gamma)} + \intt \norm{f(\bbeta)}_{L^{2}(\Gamma)} \norm{\phiN - \bphi}_{\bL^{2} (\Gamma)} \\
			& \leq C \intt \norm{\etaNn - \bbeta}_{\bH^{2-\epsilon}(\Gamma)} \norm{\phiN}_{\bL^{2} (\Gamma)} + \intt \norm{f(\bbeta)}_{L^{2}(\Gamma)} \norm{\phiN - \bphi}_{\bL^{2} (\Gamma)} \\
			& \rightarrow 0, \text{ as } N \rightarrow \infty.
		\end{align*}
		\item $ I_{11} $: The convergence of $ \qN $ leads to the convergence of $ I_{11} $.
	\end{enumerate}
	
	Therefore, we have shown that the limiting functions $ \bu $, $ \bbeta $ and $ \bd $ as $ N \rightarrow \infty $ satisfy the weak form of original problem in the sense of \eqref{weakformulation} in Definition \ref{weaksolution}, for all test functions $ \bq $, $ \bphi $ and $ \psi $, which are dense in the test space $ \QE $. This means that the approximate solutions we constructed converge to a weak solution of problem \eqref{NS}--\eqref{eta0}.
	
	Now, we prove Theorem \ref{mainresult}.
	
	\begin{proof}[Proof of Theorem \ref{mainresult}]
		From the above analysis, we have the existence of the weak solution. To derive \eqref{energyestimate}, we take the limit of estimates 1 and 2 in Lemma \ref{estunif}. Thanks to the semi-continuity properties of norms, we recover the energy estimate in \eqref{energyestimate}.
	\end{proof}

\section*{Acknowledgments}
	This work was supported by the National Natural Science Foundation of China [grant number 11771216], the Key Research and Development Program of Jiangsu Province (Social Development) [grant number BE2019725], the Six Talent Peaks Project in Jiangsu Province [grant number 2015-XCL-020] and the Qing Lan Project of Jiangsu Province.


\begin{thebibliography}{99}
		\addcontentsline{toc}{section}{References}
		\bibitem{AM1973}
		G. Astarita\ and\ G. Marucci, {\it Principles of Non-Newtonian Fluid Mechanics}, McGraw-Hill, 1974.
		
		\bibitem{BDV2004}
		H. Beir\~{a}o da Veiga, On the existence of strong solutions to a coupled fluid-structure evolution problem, J. Math. Fluid Mech. {\bf 6} (2004), no.~1, 21--52.
		
		\bibitem{Browder1963}
		F. E. Browder, Nonlinear elliptic boundary value problems, Bull. Amer. Math. Soc. {\bf 69} (1963), 862--874.

\bibitem{Bm2016}
M. Buka\v{c}\ and\ B. Muha, Stability and convergence analysis of the extensions of the kinematically coupled scheme for the fluid-structure interaction, SIAM J. Numer. Anal. {\bf 54} (2016), no.~5, 3032--3061.


\bibitem{Bc2020}
M. Buka\v{c}\ and\ S. \v{C}ani\'{c}, A partitioned numerical scheme for fluid-structure interaction with slip, Math. Model. Nat. Phenom., in press.  
DOI:  10.1051/mmnp/2020051

		\bibitem{CGLMTW2019}
		S. \v{C}ani\'{c}, M. Gali\'{c}, M. Ljulj, B. Muha, J. Tamba\v{c}a\ and\ Y. Wang, Analysis of a linear 3D fluid-mesh-shell interaction problem, Z. Angew. Math. Phys. {\bf 70} (2019), no.~2, Art. 44, 38 pp.
		
		\bibitem{CGM2020}
		S. \v{C}ani\'{c}, M. Gali\'{c}\ and\ B. Muha, Analysis of a 3D nonlinear, moving boundary problem describing fluid-mesh-shell interaction, Trans. Amer. Math. Soc. {\bf 373} (2020), no.~9, 6621--6681.
		
		\bibitem{CDEG2005}
		A. Chambolle, B. Desjardins, M. J. Esteban\ and\ C. Grandmont, Existence of weak solutions for the unsteady interaction of a viscous fluid with an elastic plate, J. Math. Fluid Mech. {\bf 7} (2005), no.~3, 368--404.
		
		\bibitem{Ciarlet2013}
		P. G. Ciarlet, {\it Linear and nonlinear functional analysis with applications}, Society for Industrial and Applied Mathematics, Philadelphia, PA, 2013.
		
		\bibitem{ChS2010}
		C. H. A. Cheng\ and\ S. Shkoller, The interaction of the 3D Navier-Stokes equations with a moving nonlinear Koiter elastic shell, SIAM J. Math. Anal. {\bf 42} (2010), no.~3, 1094--1155.
		
		\bibitem{CS2005}
		D. Coutand\ and\ S. Shkoller, Motion of an elastic solid inside an incompressible viscous fluid, Arch. Ration. Mech. Anal. {\bf 176} (2005), no.~1, 25--102.
		
		\bibitem{CS2006}
		D. Coutand\ and\ S. Shkoller, The interaction between quasilinear elastodynamics and the Navier-Stokes equations, Arch. Ration. Mech. Anal. {\bf 179} (2006), no.~3, 303--352.
		
		\bibitem{Galdi}
		G. P. Galdi, Mathematical problems in classical and Non-Newtonian fluid mechanics, in {\it Hemodynamical flows}, 121--273, Oberwolfach Semin., 37, Birkh\"{a}user, Basel, 2008.
		
		\bibitem{Grandmont2008}
		C. Grandmont, Existence of weak solutions for the unsteady interaction of a viscous fluid with an elastic plate, SIAM J. Math. Anal. {\bf 40} (2008), no.~2, 716--737.
		
		\bibitem{GH2016}
		C. Grandmont\ and\ M. Hillairet, Existence of global strong solutions to a beam-fluid interaction system, Arch. Ration. Mech. Anal. {\bf 220} (2016), no.~3, 1283--1333.
		
		\bibitem{GHL2019}
		C. Grandmont, M. Hillairet\ and\ J. Lequeurre, Existence of local strong solutions to fluid-beam and fluid-rod interaction systems, Ann. Inst. H. Poincar\'{e} Anal. Non Lin\'{e}aire {\bf 36} (2019), no.~4, 1105--1149.
		
		\bibitem{GGCC2009}
		G. Guidoboni, R. Glowinski, N. Cavallini\ and\ S. Canic, Stable loosely-coupled-type algorithm for fluid-structure interaction in blood flow, J. Comput. Phys. {\bf 228} (2009), no.~18, 6916--6937.
		
		\bibitem{Gurtin1981}
		M. E. Gurtin, {\it An introduction to continuum mechanics}, Mathematics in Science and Engineering, 158, Academic Press, Inc., New York, 1981.
		
		\bibitem{HLN2016}
		A. Hundertmark-Zau\v{s}kov\'{a}, M. Luk\'{a}\v{c}ov\'{a}-Medvi\v{d}ov\'{a}\ and\ \v{S}. Ne\v{c}asov\'{a}, On the existence of weak solution to the coupled fluid-structure interaction problem for Non-Newtonian shear-dependent fluid, J. Math. Soc. Japan {\bf 68} (2016), no.~1, 193--243.
		
		
		\bibitem{IKLT2014}
		M. Ignatova, I. Kukavica, I. Lasiecka\ and\ A. Tuffaha, On well-posedness and small data global existence for an interface damped free boundary fluid-structure model, Nonlinearity {\bf 27} (2014), no.~3, 467--499.
		
		\bibitem{IKLT2017}
		M. Ignatova, I. Kukavica, I. Lasiecka\ and\ A. Tuffaha, Small data global existence for a fluid-structure model, Nonlinearity {\bf 30} (2017), no.~2, 848--898
		
		\bibitem{KT2012b}
		I. Kukavica\ and\ A. Tuffaha, Regularity of solutions to a free boundary problem of fluid-structure interaction, Indiana Univ. Math. J. {\bf 61} (2012), no.~5, 1817--1859.
		
		\bibitem{KTZ2009}
		I. Kukavica, A. Tuffaha\ and\ M. Ziane, Strong solutions to a nonlinear fluid structure interaction system, J. Differential Equations {\bf 247} (2009), no.~5, 1452--1478.
		
		
		\bibitem{Lengeler2014}
		D. Lengeler, Weak solutions for an incompressible, generalized Newtonian fluid interacting with a linearly elastic Koiter type shell, SIAM J. Math. Anal. {\bf 46} (2014), no.~4, 2614--2649.
		
		\bibitem{LR2014}
		D. Lengeler\ and\ M. R\r{u}\v{z}i\v{c}ka, Weak solutions for an incompressible Newtonian fluid interacting with a Koiter type shell, Arch. Ration. Mech. Anal. {\bf 211} (2014), no.~1, 205--255.
		
		\bibitem{Lequeurre2011}
		J. Lequeurre, Existence of strong solutions to a fluid-structure system, SIAM J. Math. Anal. {\bf 43} (2011), no.~1, 389--410.
		
		\bibitem{Lequeurre2013}
		J. Lequeurre, Existence of strong solutions for a system coupling the Navier-Stokes equations and a damped wave equation, J. Math. Fluid Mech. {\bf 15} (2013), no.~2, 249--271.
		
		\bibitem{MNRR1996}
		J. M\'{a}lek, J. Ne\v{c}as, M. Rokyta\ and\ M. R\r{u}\v{z}i\v{c}ka, {\it Weak and measure-valued solutions to evolutionary PDEs}, Applied Mathematics and Mathematical Computation, 13, Chapman \& Hall, London, 1996.
		
		\bibitem{Minty1963}
		G. J. Minty, on a ``monotonicity'' method for the solution of non-linear equations in Banach spaces, Proc. Nat. Acad. Sci. U.S.A. {\bf 50} (1963), 1038--1041.
		
		\bibitem{MC2013}
		B. Muha\ and\ S. \v{C}ani\'{c}, A nonlinear, 3D fluid-structure interaction problem driven by the time-dependent dynamic pressure data: a constructive existence proof, Commun. Inf. Syst. {\bf 13} (2013), no.~3, 357--397.
		
		\bibitem{MC2013ARMA}
		B. Muha\ and\ S. Cani\'{c}, Existence of a weak solution to a nonlinear fluid-structure interaction problem modeling the flow of an incompressible, viscous fluid in a cylinder with deformable walls, Arch. Ration. Mech. Anal. {\bf 207} (2013), no.~3, 919--968.
		
		\bibitem{MC2014JDE}
		B. Muha\ and\ S. \v{C}ani\'{c}, Existence of a solution to a fluid-multi-layered-structure interaction problem, J. Differential Equations {\bf 256} (2014), no.~2, 658--706.
		
		\bibitem{MC2015}
		B. Muha\ and\ S. \v{C}ani\'{c}, Fluid-structure interaction between an incompressible, viscous 3D fluid and an elastic shell with nonlinear Koiter membrane energy, Interfaces Free Bound. {\bf 17} (2015), no.~4, 465--495.
		
		\bibitem{MC2016JDE}
		B. Muha\ and\ S. \v{C}ani\'{c}, Existence of a weak solution to a fluid-elastic structure interaction problem with the Navier slip boundary condition, J. Differential Equations {\bf 260} (2016), no.~12, 8550--8589.
		
		\bibitem{MC2019JDE}
		B. Muha\ and\ S. \v{C}ani\'{c}, A generalization of the Aubin--Lions--Simon compactness lemma for problems on moving domains, J. Differential Equations {\bf 266} (2019), no.~12, 8370--8418.
		
		\bibitem{MS2019}
		B. Muha\ and\ S. Schwarzacher, Existence and regularity for weak solutions for a fluid interacting with a non-linear shell in 3D, (2019), \href{https://arxiv.org/abs/1906.01962}{arXiv:1906.01962}.
		
		\bibitem{Pazy1983}
		A. Pazy, {\it Semigroups of linear operators and applications to partial differential equations}, Applied Mathematical Sciences, 44, Springer-Verlag, New York, 1983.
		
		\bibitem{QGY2020}
		Y. Qin, Y. Guo\ and\ P.-F. Yao, Energy decay and global smooth solutions for a free boundary fluid-nonlinear elastic structure interface model with boundary dissipation, Discrete Contin. Dyn. Syst. \textbf{40} (2020), no.~3, 1555--1593.
		
		\bibitem{QY2018}
		Y. Qin\ and\ P.-F. Yao, Energy decay and global solutions for a damped free boundary fluid--elastic structure interface model with variable coefficients in elasticity, Appl. Anal. {\bf 99} (2020), no.~11, 1953--1971.
		
		\bibitem{Simon1987}
		J. Simon, Compact sets in the space $L^p(0,T;B)$, Ann. Mat. Pura Appl. (4) {\bf 146} (1987), 65--96.
		
		\bibitem{TW2018}
		S. Trifunovi\'{c}\ and\ Y.-G. Wang, Existence of a weak solution to the fluid-structure interaction problem in 3D, J. Differential Equations {\bf 268} (2020), no.~4, 1495--1531.
		
		\bibitem{TW2020}
		S. Trifunovi\'{c}\ and\ Y.-G. Wang, Weak solution to the incompressible viscous fluid and a thermoelastic plate interaction problem in 3D, Acta Math. Sci. Ser. B (Engl. Ed.) {\bf 41} (2021), no.~1, 19--38.
		
		\bibitem{Wolf2007}
		J. Wolf, Existence of weak solutions to the equations of non-stationary motion of Non-Newtonian fluids with shear rate dependent viscosity, J. Math. Fluid Mech. {\bf 9} (2007), no.~1, 104--138.
		
	\end{thebibliography}
\end{document}